\documentclass[11pt,a4paper,%
               BCOR20mm,DIV16,%
               headings=normal,%
               twoside,%
               headtopline,%
               headsepline,%
               footsepline,%
               plainfootsepline,%
               abstracton,%
               numbers=enddot,%
               footinclude=false]{scrartcl}
\hyphenation{Mathe-ma-ti-schen}
\usepackage{scrpage2}
\pagestyle{scrheadings}
\usepackage{amsmath,amssymb,amsthm}
\usepackage{charter}%
\swapnumbers
\theoremstyle{plain}
\newtheorem{theo}{Theorem}[section]
\newtheorem{prop}[theo]{Proposition}
\newtheorem{coro}[theo]{Corollary}
\newtheorem{lemm}[theo]{Lemma}
\newtheorem{namet}[theo]{\myThmName}
\newenvironment{nthm}[1][]{\edef\myThmName{#1}\begin{namet}}{\end{namet}}
\theoremstyle{definition}
\newtheorem{exam}[theo]{Example}
\newtheorem{exas}[theo]{Examples}
\newtheorem{prob}[theo]{Problem}
\newtheorem{rema}[theo]{Remark}
\newtheorem{rems}[theo]{Remarks}

\newtheorem{defi}[theo]{Definition}
\newtheorem{defs}[theo]{Definitions}
\newtheorem{nota}[theo]{Notation}

\newtheorem{acks}[theo]{Acknowledgements}
\newtheorem{named}[theo]{\myThmName}
\newtheorem*{named*}{\myThmName}
\newenvironment{ndef}[1][]{\edef\myThmName{#1}\begin{named}}{\end{named}}
\newenvironment{ndef*}[1][\kern-.35em]{\edef\myThmName{#1}\begin{named*}}{\end{named*}}
\rehead{\small\rm\MYauthor}
\lehead{\small\rm\thepage}%
\lohead{\footnotesize\rm
    \def\\{}
    \footnotesize\MYtitle}%
\rohead{\small\rm\thepage}%
\lefoot{}
\rofoot{}
\setheadtopline{.4pt} 
\setheadsepline{.8pt}
\setfootsepline{.4pt}

\newcommand{\CC}{\mathbb C}
\newcommand{\FF}{\mathbb F}
\newcommand{\HH}{\mathbb H}
\newcommand{\KK}{\mathbb K}
\newcommand{\LL}{\mathbb L}
\newcommand{\MM}{\mathbb M}
\newcommand{\QQ}{\mathbb Q}
\newcommand{\PP}{\mathbb P}
\newcommand{\RR}{\mathbb R}
\newcommand{\ZZ}{\mathbb Z}
\newcommand{\pfaff}{\mathrm{pf}}
\newcommand{\id}{\mathrm{id}}
\newcommand{\disc}{\operatorname{disc}}
\newcommand{\GL}[2]{\def\test{#1}\ifx\empty\test
      \mathrm{GL}(#2)\else\mathrm{GL}_{#1}#2\fi}
\newcommand{\SL}[2]{\def\test{#1}\ifx\empty\test
      \mathrm{SL}(#2)\else\mathrm{SL}_{#1}#2\fi}
\newcommand{\PSL}[2]{\def\test{#1}\ifx\empty\test
      \mathrm{PSL}(#2)\else\mathrm{PSL}_{#1}#2\fi}
\newcommand{\PGL}[2]{\def\test{#1}\ifx\empty\test
      \mathrm{PGL}(#2)\else\mathrm{PGL}_{#1}#2\fi}
\newcommand{\Sp}[2]{\def\test{#1}\ifx\empty\test
      \mathrm{Sp}(#2)\else\mathrm{Sp}_{#1}#2\fi}
\newcommand{\GSp}[2]{\def\test{#1}\ifx\empty\test
      \mathrm{GSp}(#2)\else\mathrm{GSp}_{#1}#2\fi}
\newcommand{\GO}[2]{\def\test{#1}\ifx\empty\test
      \mathrm{GO}(#2)\else\mathrm{GO}_{#1}#2\fi}
\newcommand{\GOplus}[2]{\def\test{#1}\ifx\empty\test
      \mathrm{GO}^+(#2)\else\mathrm{GO}^+_{#1}#2\fi}
\newcommand{\SO}[2]{\def\test{#1}\ifx\empty\test
      \mathrm{SO}(#2)\else\mathrm{SO}_{#1}#2\fi}
\newcommand{\Orth}[2]{\def\test{#1}\ifx\empty\test
      \mathrm{O}(#2)\else\mathrm{O}_{#1}#2\fi}
\newcommand{\PGO}[2]{\def\test{#1}\ifx\empty\test
      \mathrm{PGO}(#2)\else\mathrm{PGO}_{#1}#2\fi}
\newcommand{\PGOplus}[2]{\def\test{#1}\ifx\empty\test
      \mathrm{PGO}^+(#2)\else\mathrm{PGO}^+_{#1}#2\fi}
\newcommand{\Aut}[2][]{\mathrm{Aut}_{#1}(#2)}
\newcommand{\End}[2][]{\mathrm{End}_{#1}(#2)}

\newcommand{\Hom}[3][]{\mathrm{Hom}_{#1}(#2,#3)}
\newcommand{\Gal}[1]{\mathrm{Gal}(#1)}
\newcommand{\G}[2]{\mathrm{Gr}_{#1,#2}^\wedge}
\newcommand{\Sk}[1]{\KK^{#1}\wedge \KK^{#1}}
\newcommand{\s}[2]{S^{#1}_{#2}}
\newcommand{\M}[2]{\KK^{#1\times#2}}
\newcommand{\opp}{\rho}
\newcommand{\actSL}{\#}
\newcommand{\frob}{\varphi}
\let\setminus\smallsetminus
\makeatletter
\newcommand{\widebar}[1]{ {\mathchoice
    {\vbox
    {\m@th \ialign {##\crcr \noalign {\kern 1\p@ }\kern 1\p@ \hrulefill \crcr
        \noalign {\kern 1\p@ \nointerlineskip }%
        $\hfil \displaystyle {#1}\hfil $\crcr }}}
    {\vbox
    {\m@th \ialign {##\crcr \noalign {\kern 1\p@ }\kern 1\p@ \hrulefill \crcr
        \noalign {\kern 1\p@ \nointerlineskip }%
        $\hfil \textstyle {#1} $\crcr }}}
    {\vbox
    {\m@th \ialign {##\crcr \noalign {\kern 1\p@ }\kern 1\p@ \hrulefill \crcr
        \noalign {\kern 1\p@ \nointerlineskip }%
        $\hfil \scriptstyle {#1}\hfil $\crcr }}}
    {\vbox
    {\m@th \ialign {##\crcr \noalign {\kern 1\p@ }\kern 1\p@ \hrulefill \crcr
        \noalign {\kern 1\p@ \nointerlineskip }%
        $\hfil \scriptscriptstyle {#1}\hfil $\crcr }}}%
    }}
\makeatother
\let\gal\widebar
\makeatletter
\newcommand{\myBox}[1]{{\mathchoice
    {\vbox
    {\m@th \ialign {##\crcr \hline \crcr
        \noalign {\nointerlineskip }%
        \vrule
        \vphantom{x}\smash{\lower.21\p@\hbox{$\kern-1.7\p@ \displaystyle {#1}\kern-1.7\p@$}}\vrule\crcr
        \noalign {\nointerlineskip  }\crcr\hline%
      }}}
    {\vbox
    {\m@th \ialign {##\crcr \hline \crcr
        \noalign {\nointerlineskip }%
        \vrule
        \vphantom{x}\smash{\lower.25\p@\hbox{$\kern-1.7\p@ \textstyle {#1}\kern-1.7\p@$}}\vrule\crcr
        \noalign {\nointerlineskip  }\crcr\hline%
      }}}
    {\vbox
    {\m@th \ialign {##\crcr \hline \crcr
        \noalign {\nointerlineskip }%
        \vrule
        $\scriptstyle\vphantom{x}$\smash{\lower.25\p@\hbox{$\kern-1.7\p@ \scriptstyle {#1}\kern-1.6\p@$}}\vrule\crcr
        \noalign {\nointerlineskip  }\crcr\hline%
      }}}
    {\vbox
    {\m@th \ialign {##\crcr \hline \crcr
        \noalign {\nointerlineskip }%
        \vrule
        $\scriptscriptstyle\vphantom{x}$\smash{\lower.18\p@\hbox{$\kern-1.6\p@ \scriptscriptstyle {#1}\kern-1.5\p@$}}\vrule\crcr
        \noalign {\nointerlineskip  }\crcr\hline%
      }}}%
    }}
\makeatother
\newcommand{\myTimes}{\vphantom{x}\smash{\times}}
\newcommand{\sq}{\myBox{\phantom{\myTimes}}}
\newcommand{\sqt}{\myBox{\myTimes}}
\newcommand{\set}[2]{\left\{{#1}\left|\vphantom{#1}\vphantom{#2}\right.\, 
                     \strut{#2}\right\}}
\newcommand{\smallset}[2]{\left\{\smash{#1}\left|\right.\, 
                     \smash{#2}\right\}}
\newcommand{\smallspnK}[1]{\langle#1\rangle^{}_\KK}
\newcommand{\spnK}[1]{\left\langle#1\right\rangle^{}_\KK}
\newcommand{\smallspnKperp}[1]{\langle#1\rangle^\perp_\KK}
\newcommand{\spn}[2]{\left\langle#1\right\rangle^{}_{#2}}
\newcommand{\smallspn}[2]{\langle#1\rangle^{}_{#2}}
\newcommand{\Char}{\mathop{\mathrm{char}}}
\newcommand{\GHeis}[3]{\mathrm{GH}(#1,#2,#3)}
\newcommand{\gheis}[4][]{\mathfrak{gh}_{#1}(#2,#3,#4)}

\newcommand{\z}[1]{\mathfrak{z}(#1)}
\newcommand{\g}{\mathfrak{g}}
\newcommand{\formspace}[3]{P_{#1,#2}^{#3}}
\newcommand{\Formspace}[4]{P_{#1,#2,#3}^{#4}}
\newcommand{\N}{N}
\newcommand{\Nperp}{N^\perp}
\newcommand{\Pu}[1]{\mathrm{Pu}(#1)}
\newcommand{\tr}{\mathop{\mathrm{tr}}}
\usepackage{mdwlist}
\usepackage{paralist}%

\newenvironment{enum}{\begin{compactenum}}{\end{compactenum}}

\newcounter{rememberEnumi}

\newcommand{\keywords}[1]{\bgroup
\renewcommand{\thefootnote}{}\footnote{{\bf Key words: }#1} 
\egroup\setcounter{footnote}{0}}
\newcommand{\classification}[1]{\bgroup\renewcommand{\thefootnote}{}%
\footnote{{\bf MSC 2000 (Mathematics Subject Classification): }#1}
\egroup\setcounter{footnote}{0}}
\addtokomafont{sectioning}{\normalfont\rmfamily\bfseries}
\title{\vspace*{-10mm}%
  Stabilizers of Subspaces\\ under Similitudes of the Klein Quadric, 
       \\and Automorphisms of Heisenberg Algebras} 
\author{Michael Gulde, Markus Stroppel}
\newcommand{\MYtitle}{Stabilizers of Subspaces, and Automorphisms of Heisenberg Algebras}
\newcommand{\MYauthor}{Michael Gulde, Markus Stroppel}
\date{\vspace{-\baselineskip}}
\RequirePackage{hyperref} 
\begin{document}
\maketitle
\keywords{Klein quadric, Grassmann space, automorphism, orbit,
  nilpotent Lie algebra, Heisenberg algebra, quadratic field
  extension, quaternion algebra}
\classification{17B30, 
                22E25, 
                15A63, 
                15A69, 
                15A72, 
                51A50, 
}
\begin{abstract}\noindent
  We determine the groups of automorphisms and their orbits for nilpotent
  Lie algebras of class~$2$ and small dimension, over arbitrary fields
  (including the characteristic~$2$ case).   
\end{abstract}

\section*{Introduction}

In~\cite{MR2431124}, a conceptual approach has been given to
the classification of Lie algebras $L$ with $L':=[L,L]\le\z L$ for
small values of $\dim(L/\z L)$ and over an arbitrary field~$\KK$ (including
the case~$\Char\KK=2$).  In the present paper, we use this to
find all automorphisms of Lie algebras $L$ with $L'\le\z L$ and
$\dim(L/L')=4$, 
clarify the structure of the group $\Aut L$, and determine its orbits
on~$L$. 

The restriction to the case $\dim(L/L')=4$ is justified by the fact
that the classification problem becomes wild for $\dim(L/L')>4$, even
if we assume that the ground field is algebraically closed,
cf.~\cite{MR2161930}, \cite{MR0491938}. 

One motivation for the study of Lie algebras over arbitrary fields
comes from group theory.  If one wants to understand nilpotent groups,
Lie methods are useful in many cases. For instance, we have
(cf.~\cite[{VIII\,9.16}]{MR650245}): If $G$ is a nilpotent group of
class at most~$2$ in which every element has a unique square root then
$x+y:=xy\sqrt{[y,x]}$ defines the addition of a Lie ring (i.e., a Lie
algebra over the ring~$\ZZ$ of integers) $\g=(G,+,[\cdot,\cdot])$, and
$\Aut{G}=\Aut{\g}$. Moreover, this addition coincides with the
multiplication on any cyclic subgroup of~$G$.  If the group $G$ has
exponent~$p\in\PP$ (where $p\ne2$ by our assumption on square roots)
then the Lie ring may also be considered as a Lie algebra over the
field with~$p$ elements.
Our present work thus comprises a generalization of the results
in~\cite{MR593560}. See~\cite{MR795568} for extensions in a more
group-theoretic manner, and~\cite{MR2410562} for an investigation of a
similar class of nilpotent groups. 

\goodbreak
\begin{ndef*}[Organization of the paper]\quad
  \begin{description*}
  \item[] Section~\ref{sec:autHeis}: %
    {Automorphisms of Heisenberg algebras} 
  \item[] Section~\ref{sec:toolsForms}: %
    {Tools from the classification of forms} %
  \item[] Section~\ref{sec:classHeis}: %
    {Classification of reduced Heisenberg algebras}
  \item[] Section~\ref{sec:directComputations}: %
    {The cases where $\Sigma_\beta$ can be computed directly}
  \item[] Section~\ref{sec:fieldextensions}: %
    {Examples involving field extensions}
  \item[] Section~\ref{sec:quaternions}: %
    {Examples involving quaternion algebras}
  \item[] Section~\ref{sec:results}: %
      {Results}
  \end{description*}
\end{ndef*}

\goodbreak
\section{Automorphisms of Heisenberg algebras}\label{sec:autHeis}
\begin{defs}
  A \emph{generalized Heisenberg algebra}
  $\gheis VZ\beta:=(V\times Z,[\cdot,\cdot]_\beta)$ is given by vector
  spaces $V$ and $Z$ together with an alternating bilinear map
  $\beta\colon V\times V\to Z$:
  the underlying vector space is $V\times Z$, and the Lie bracket is 
  $[(v,x),(w,y)]_\beta:=(0,\beta(v,w))$.

  If the image $\beta(V\times V)$ generates~$Z$, and
  $\smallset{v\in V}{\forall\, w\in V\colon \beta(v,w)=0}=\{0\}$,
  we call $\gheis VZ\beta$ a \emph{reduced Heisenberg algebra}.
  These conditions mean that $\{0\}\times Z$ equals both the center
  and the commutator algebra of~$\gheis VZ\beta$. 

  Using the universal property of the tensor product, we obtain a
  unique linear surjection $\hat\beta\colon V\wedge V\to Z$ such that\/ 
  $\hat\beta(v\wedge w)=\beta(v,w)$ holds for all\/ $v,w\in V$.
  If $\gheis VZ\beta$ is reduced then the kernel of\/ $\hat\beta$
  satisfies 
\[
  \forall\, v\in V\setminus\{0\}\colon 
  \eta(\{v\}\times V)\not\subseteq\ker\hat\beta 
\eqno{(\ast)}
\]
  where $\eta(v,w)=v\wedge w$. 
  Conversely, every linear surjection $\gamma\colon V\wedge V\to Z$
  satisfying condition~{$(\ast)$} yields a reduced Heisenberg algebra
  $\gheis VZ{\gamma\circ\eta})$. 
\end{defs}

  Every nilpotent Lie algebra of class~$2$ is isomorphic to the direct
  sum of a reduced Heisenberg algebra and an abelian Lie algebra,
  cf.~\cite[6.2]{MR2431124}.  

\begin{prop}\label{isoHeis}
  Let\/ $\gheis VZ\beta$ be a reduced Heisenberg algebra, and let $A$
  be an abelian Lie algebra.
  Let\/ $\Sigma_\beta$ denote the group of all linear bijections of $V$ onto
  itself such that\/ $\sigma(\ker\hat\beta)=\ker\hat\beta$.
  For $\sigma\in\Sigma_\beta$, let $\sigma'$ denote the unique element of\/
  $\GL{}Z$ such that
\[
  \forall\, u,v\in V\colon  \sigma'(\beta(u,v))=\beta(\sigma(u),\sigma(v)) \,.
\eqno{(\ast\ast)}
\]
  Then the automorphisms of the Lie algebra $\gheis VZ\beta\times A$
  are precisely the maps
  \[
  (v,z,a)\mapsto\left( \sigma(v) , \sigma'(z)+\tau(v)+\zeta(a) ,
    \alpha(a)+\xi(v) \right) \,,
  \]
  where $\sigma\in\Sigma_\beta$, $\tau\in\Hom VZ$, $\zeta\in\Hom AZ$, $\alpha\in\GL{}A$, and
  $\xi\in\Hom VA$. 
\end{prop}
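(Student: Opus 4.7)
The plan is to exploit the canonical subalgebras of $L:=\gheis VZ\beta\times A$ that every automorphism must preserve. Since $\gheis VZ\beta$ is reduced, both its derived algebra and its center equal $\{0\}\times Z$; combined with the product structure and the abelian nature of $A$, this gives
\[
  L'=\{0\}\times Z\times\{0\},\qquad \z L=\{0\}\times Z\times A.
\]
Every $\phi\in\Aut L$ therefore stabilises both $L'$ and $\z L$ and induces linear bijections on $L'$, on $\z L/L'\cong A$, and on $L/\z L\cong V$.

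The next step is to read off the shape of an arbitrary $\phi$ by restricting it to the three summands $V$, $Z$, $A$ of the underlying vector space. Stability of $L'$ forces $\phi(0,z,0)=(0,\sigma'(z),0)$ for some $\sigma'\in\GL{}Z$; stability of $\z L$ forces $\phi(0,0,a)=(0,\zeta(a),\alpha(a))$ with linear $\zeta\colon A\to Z$ and $\alpha\colon A\to A$; and writing $\phi(v,0,0)=(\sigma(v),\tau(v),\xi(v))$ one sees that the map induced on $L/\z L\cong V$ is $\sigma$, hence $\sigma\in\GL{}V$, while the map induced on $L/L'\cong V\oplus A$ is $(v,a)\mapsto(\sigma(v),\xi(v)+\alpha(a))$, whose bijectivity forces $\alpha\in\GL{}A$. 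Computing $\phi[(v,0,0),(w,0,0)]$ in two ways yields
\[
  \sigma'(\beta(v,w))=\beta(\sigma(v),\sigma(w))\quad\text{for all } v,w\in V,
\]
which is $(\ast\ast)$; through $\hat\beta$ this is equivalent to $(\sigma\wedge\sigma)(\ker\hat\beta)=\ker\hat\beta$, so $\sigma\in\Sigma_\beta$, and $\sigma'$ is uniquely determined by $\sigma$ because $\hat\beta$ is surjective. Summing the three restrictions reassembles $\phi$ into the stated form.

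For the converse, given any $\sigma\in\Sigma_\beta$ (with the associated $\sigma'$) together with arbitrary $\tau\in\Hom VZ$, $\zeta\in\Hom AZ$, $\alpha\in\GL{}A$, $\xi\in\Hom VA$, the prescribed formula defines a linear bijection, since its matrix in the decomposition $V\oplus A\oplus Z$ is block-triangular with invertible diagonal blocks $\sigma,\alpha,\sigma'$, and the bracket identity $[\phi(x),\phi(y)]=\phi([x,y])$ follows from $(\ast\ast)$, as on both sides only the $V$-components of $x,y$ contribute. The argument is essentially bookkeeping about invariant subspaces, and I expect no serious obstacle. The one slightly subtle point is the equivalence between $(\ast\ast)$ and the defining condition of $\Sigma_\beta$, which uses that $\hat\beta\colon V\wedge V\to Z$ is surjective and identifies $Z$ with $(V\wedge V)/\ker\hat\beta$, so that existence of a $\sigma'$ satisfying $(\ast\ast)$ amounts exactly to $\sigma\wedge\sigma$ preserving $\ker\hat\beta$.
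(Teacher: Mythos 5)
Your proposal is correct and follows essentially the same route as the paper: both arguments rest on the observation that the commutator algebra $\{0\}\times Z\times\{0\}$ and the center $\{0\}\times Z\times A$ are invariant under every automorphism, which pins down the block-triangular shape of the map, after which the bracket computation yields $(\ast\ast)$ and the identification of $\sigma$ as an element of $\Sigma_\beta$. You merely spell out the bookkeeping (including the equivalence of $(\ast\ast)$ with $(\sigma\wedge\sigma)(\ker\hat\beta)=\ker\hat\beta$ via surjectivity of $\hat\beta$) that the paper's one-line proof leaves implicit.
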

\begin{proof}
  An easy computation shows that the given maps are automorphisms. 
  The center of $\gheis VZ\beta\times A$ is $\{0\}\times Z\times A$,
  its commutator algebra is $\{0\}\times Z\times\{0\}$.
  Since these are invariant subalgebras, there are no other
  automorphisms than the ones we have described. 
\end{proof}

For the case of reduced algebras this result has already been stated
in~\cite[4.4]{MR1484565}.

\section{Tools from the classification of forms}\label{sec:toolsForms}

The classification of Heisenberg algebras  $\gheis VZ\beta$ with $\dim
V=4$ and our determination of the corresponding groups of automorphisms
and their orbits is based on the study of restrictions of the Pfaffian
form, see~\ref{O3Kdef} below. 

Aiming at applications later on, we present well known results here,
mainly on forms in at most four variables. For the reader's
convenience we indicate proofs and include a discussion of quaternions
over arbitrary fields.

\begin{ndef}[The Arf Invariant]\label{Arf}
  When dealing with quadratic forms over a field~$\KK$ with
  $\Char\KK=2$ one has to distinguish between diagonalizable quadratic
  forms and non-diagonalizable ones. Equivalence of the latter depends
  on Arf's invariant~$\delta$ (cf.~\cite[8.11]{MR1935285}): for a 
  non-diagonalizable form $q$ in two variables given by $q(v):=v'Mv$
  this is just
  $\delta(q) := \frac{\det M}{(\tr (iM))^2} + \wp$,
  where $\wp:=\set{x+x^2}{x\in\KK}$.
  If we describe the same form by different matrices $M$ and
  $\tilde{M}$ then $M-\tilde{M} = ti$ for some $t\in\KK$ and $i=\left(
    \begin{smallmatrix}
      0 & 1 \\
      -1 & 0 
    \end{smallmatrix}\right)$. Thus 
  $\frac{\det M}{(\tr(iM))^2} -
  \frac{\det\tilde{M}}{(\tr(i\tilde{M}))^2} =
  \frac{t}{\tr(iM)}+\bigl(\frac{t}{\tr(iM)}\bigr)^2 \in\wp$ shows that
  $\delta$ is indeed an invariant of the form~$q$. 
  
  It is easy to see that $\delta(q)$ does not change if we replace $q$
  by $tq$ with $t\in\KK^\times$, or if we pass to an equivalent form
  (replacing $M$ by $A'MA$).
\end{ndef}

In order to understand the set of values of a non-diagonalizable form
we multiply the form with a scalar such that it assumes the value~$1$
at some $v\in\KK^2$. Then the form is equivalent to the form~$q$ given
by $q(v)=v'\left(
  \begin{smallmatrix}
    1 & t \\
    0 & d 
  \end{smallmatrix}\right)v$,
and may be interpreted as the (multiplicative) norm form of a suitable
algebra, as follows.

We write $u$ for the class of $X$ modulo $(X^2+tX+d)$ and consider the
algebra $\KK(u):=\KK[X]/(X^2+tX+d)$. Putting $\gal{a+bu}:=a+tb-bu$
we obtain an involutory automorphism interchanging the two roots $u$
and $t-u$ of $X^2+tX+d$ in $\KK(u)$; the corresponding norm is
$(\gal{a+bu})(a+bu) = q(a,b)$. Clearly this norm is
multiplicative. Moreover, a product in $\KK(u)$ has norm~$0$ only if
one of the factors has norm~$0$, and $\KK(u)^\times
=\set{a+bu\in\KK(u)}{q(a,b)\ne0}$ is the group of units in~$\KK(u)$.
Thus $V_{1,d}:=
\set{a^2+abt+b^2d}{a,b\in\KK}\setminus\{0\} =
\set{\gal{v}v}{v\in\KK(u)^\times}$ is a subgroup of the multiplicative
group~$\KK^\times$.

\begin{nthm}[Equivalence of Non-diagonalizable Forms in Characteristic Two]
\label{nonDiagonalCharTwo}
  Let $q$ and $\tilde q$ be non-diagonalizable forms in two variables
  over a field\/~$\KK$ of characteristic~$2$. Then $q$ and $\tilde{q}$ are
  equivalent precisely if their Arf invariants are equal and the forms
  share a non-zero value, i.e., there exist non-zero vectors $v,w$
  such that $q(v)=\tilde{q}(w)$.

  A set of representatives for the equivalence classes of
  non-diagonalizable forms is obtained by taking the forms described
  by the matrices in $
  \left\{\left(
  \begin{smallmatrix}
    0 & 0 \vphantom{1}\\
    0 & 0 \vphantom{d}
  \end{smallmatrix}\right)\right\}
  \cup 
  \set{a\left(
  \begin{smallmatrix}
    1 & 1 \\
    0 & d
  \end{smallmatrix}\right)}{d\in R_\wp, a\in R_d}$
  where $R_\wp$ and $R_d$ are sets of representatives of the cosets in
  $\KK/\wp$ and $\KK^\times/V_{1,d}$, respectively.  

\end{nthm}
\begin{proof}
  We already know from~\ref{Arf} that $\delta(q)$ and
  $\delta(\tilde{q})$ coincide if $q$ and~$\tilde{q}$ are
  equivalent. Clearly, equivalent forms share a value (indeed, they
  have the same range).

  Now assume, conversely, that $q$ and~$\tilde{q}$ have the same Arf
  invariant and that they share a non-zero value. Upon basis
  transformation we may assume that $0\ne a:=q(1,0)=\tilde{q}(1,0)$. 
  Then the forms are represented by matrices $a\left(
    \begin{smallmatrix}
      1 & x \\
      0 & c
    \end{smallmatrix}\right)$ and $a\left(
    \begin{smallmatrix}
      1 & w \\
      0 & d
    \end{smallmatrix}\right)$, %
  respectively, with $x\ne0\ne w$ because the forms are not
  diagonalizable. Our assumption $\delta(q)=\delta(\tilde{q})$ yields
  the existence of $k\in\KK$ such that $\frac{c}{x^2}+\frac{d}{w^2}
  = k^2+k$.  Computing
  \[
  \left(
    \begin{matrix}
      1 & 0 \\
      kw &  w/x
    \end{matrix}\right)
  \left(
    \begin{matrix}
      1 & x \\
      0 & c
    \end{matrix}\right)
  \left(
    \begin{matrix}
      1 & kw \\
      0 &  w/x
    \end{matrix}\right)
  = 
  \left(
    \begin{matrix}
      1 & kw+w \\
      kw & w^2(k^2+k+c/x^2)
    \end{matrix}\right)
  = 
  \left(
    \begin{matrix}
      1 & kw+w \\
      kw & d
    \end{matrix}\right)
   \]
  we see that the forms are equivalent.    

  In order to find the representatives we first choose a
  representative $d\in R_\wp$ for the Arf invariant of a given
  form. Then the form is equivalent to the form described by $k\left(
    \begin{smallmatrix}
      1 & 1 \\
      0 & d
    \end{smallmatrix}\right)$ for any value $k\in\KK^\times$ assumed
  by the form. It thus remains to choose a representative $a$ for the
  coset $kV_{1,d}$.
\end{proof}

 \begin{ndef}[Diagonal Forms in Characteristic Two]\label{diagonalCharTwo}
   We continue to discuss forms in two variables over a field $\KK$ with
   $\Char\KK=2$. If such a form is described by a diagonal matrix $\left(
     \begin{smallmatrix}
       x & 0 \\
       0 & z
     \end{smallmatrix}\right)$
   a change of basis will have the effect of changing this matrix to
       \[
    \left(
      \begin{matrix}
        a & b \\
        c & d 
      \end{matrix}\right) 
    \left(
      \begin{matrix}
        x & 0 \\
        0 & z
      \end{matrix}\right)
    \left(
      \begin{matrix}
        a & c \\
        b & d
      \end{matrix}\right) 
        = \left(
      \begin{matrix}
        a^2x+b^2z & acx+bdz \\
        acx+bdz & c^2x+d^2z 
      \end{matrix}\right) \,.
    \]
    This matrix describes the \emph{same} quadratic form as the
    diagonal matrix $\left(
      \begin{smallmatrix}
        a^2x+b^2z & 0 \\
        0 & c^2x+d^2z 
      \end{smallmatrix}\right)$.
    In other words: every diagonalizable form is in fact diagonal, and
    the action of $\GL2\KK$ on the space of diagonal forms is
    equivalent to the action on $\KK^2$ given by
    \[
    \omega^{(2)}_\KK\colon \GL2\KK\times\KK^2\to\KK\colon 
    \left(\left(
      \begin{matrix}
        a & b \\
        c & d 
      \end{matrix}\right)
    ,
    \left(
      \begin{matrix}
        x \\
        z
      \end{matrix}\right)
    \right)
    \mapsto 
    \left(
      \begin{matrix}
        a^2x+b^2z  \\
        c^2x+d^2z 
      \end{matrix}\right)
    =
    \left(
      \begin{matrix}
        a^2 & b^2 \\
        c^2 & d^2 
      \end{matrix}\right)
    \left(
      \begin{matrix}
        x \\
        z
      \end{matrix}\right)
    \,.
    \]
    The orbits under $\omega^{(2)}_\KK$ are the same as those under
    the natural action of $\GL2{\KK^\sq}$ on $\KK^2$, where $\KK^\sq$
    denotes the subfield consisting of all squares in~$\KK$.

    More generally, we consider a field extension $\LL/\KK$ such that
    $\LL^\sq\subseteq\KK$ (for instance, an inseparable quadratic
    extension $\LL/\KK$, as in~\ref{complexGHaut}.\ref{complexGHautInsepCase}
    below). In that case let $R_{\KK/\LL^\sq}^{}$ be a set of
    representatives for the cosets in $\KK/\LL^\sq$, and let
    $R^{(2)}_{\KK/\LL^\sq}$ be a set that contains precisely one
    $\LL^\sq$-basis for each $\LL^\sq$-subspace of dimension~$2$
    in~$\KK^2$.  Then the orbits under the restriction
    of~$\omega^{(2)}_\KK$ to $\GL2{\LL}\times\KK^2$ are represented by
    the elements of $\smallset{(r,0)'}{r\in R_{\KK/\LL^\sq}^{}} \cup
    R^{(2)}_{\KK/\LL^\sq}$.
 \end{ndef}

 \begin{ndef}[Hermitian Forms and Quaternions]\label{Hermite}
  Let $\LL/\KK$ be a separable quadratic extension, let
  $\sigma\colon\LL\to\LL\colon x\mapsto\widebar{x}$ denote the
  generator of the Galois group $\Gal{\LL/\KK}$.
  We will need the norm $N_{\LL/\KK}\colon\LL\to\KK\colon x\mapsto\widebar{x}x$
  and the subgroup $N_{\LL/\KK}(\LL^\times)$ of $\KK^\times$.
  
  A ($\sigma$-)hermitian form $h\colon\LL^2\to\LL$ will be described by its
  hermitian Gram matrix $M_h$ via $h \left( \left(
      \begin{smallmatrix}
        a \\ x 
      \end{smallmatrix}\right),
    \left(
      \begin{smallmatrix}
        b \\ y 
      \end{smallmatrix}\right)
     \right)
    = 
     \left( \widebar{a},\widebar{x}\right) M_h  
    \left(
      \begin{smallmatrix}
        b \\ y 
      \end{smallmatrix}\right)
  $. 
  Forms $h$ and $g$ are equivalent precisely if there exists
  $A\in\GL2\LL$ such that $\widebar{A}{}'M_hA=M_g$ where $\widebar{A}$
  is obtained from $A$ by applying $\sigma$ to each entry.
  We will call the hermitian matrices equivalent in this case. 
  Without loss, we may concentrate on the case $M_h\ne0$.
  Every hermitian matrix is equivalent to a diagonal one (necessarily,
  with entries from the ground field~$\KK$) but it is not
  easy to decide about equivalence of two given diagonal matrices, in
  general. 
  
  If $h$ is non-degenerate and isotropic then $M_h$ is equivalent to $\left(
    \begin{smallmatrix}
      0 & 1 \\
      1 & 0
    \end{smallmatrix}\right)$.
  In particular, all non-degenerate isotropic forms are equivalent to
  the one described by $\left(
    \begin{smallmatrix}
      1 & 0 \\
      0 & -1
    \end{smallmatrix}\right)$.
  Any degenerate hermitian matrix is equivalent to $\left(
    \begin{smallmatrix}
      a & 0 \\
      0 & 0
    \end{smallmatrix}\right)$
  for some $a\in\KK$, and $\left( 
    \begin{smallmatrix}
      a & 0 \\
      0 & 0
    \end{smallmatrix}\right)$ and $\left(
     \begin{smallmatrix}
      b & 0 \\
      0 & 0
    \end{smallmatrix}\right)$
  are equivalent precisely if $aN(\LL^\times)=bN(\LL^\times)$.

  It remains to understand the hermitian matrices describing
  anisotropic forms. Every such matrix is equivalent to one of the
  shape $\left(
    \begin{smallmatrix}
      a & 0 \\
      0 & c
    \end{smallmatrix}\right)$
  with determinant $ac\ne0$; and two such matrices can only be
  equivalent if their determinants are in the same coset
  modulo~$N(\LL^\times)$. However, this condition is not sufficient
  for equivalence, in general.

  For any anisotropic hermitian form $h$ on $\LL^2$ we define
  $V_h:=\smallset{h(X,X)}{X\in\LL^2\setminus\{(0,0)'\}}$. 
  The set 
  \[
  \HH_{\LL/\KK}^h 
  :=\smallset{A\in\LL^{2\times2}}{\widebar{A}{}'M_hA=M_h\det A}
  \]
  forms a quaternion field, see~\cite{MR1681303}
  (and Section~\ref{sec:quaternions} below).  Proportional forms lead to the same
  quaternion field, of course. Conversely, equality
  $\HH_{\LL/\KK}^g=\HH_{\LL/\KK}^h$ implies that there exists
  $t\in\KK^\times$ with $g=th$.
  Explicitly, we have for the form $h_c$ given by $M_{h_c}:=\left(
    \begin{smallmatrix}
      1 & 0 \\
      0 & c
    \end{smallmatrix}\right)$: 
  \[
  \HH_{\LL/\KK}^c := \HH_{\LL/\KK}^h = \set{\left(
      \begin{matrix}
        a & -c\widebar{x} \\
        x & \widebar{a}
      \end{matrix}\right)}{a,x\in\LL} \,,
  \]
  and mapping $A=\left(
    \begin{smallmatrix}
      a & -c\widebar{x} \\
      x & \widebar{a}
    \end{smallmatrix}\right)\in\HH_{\LL/\KK}^c$ to $\widetilde{A}:=\left(
    \begin{smallmatrix}
      \widebar{a} & c\widebar{x} \\
      -x & a
    \end{smallmatrix}\right)$
  gives an anti-automorphism~$\kappa$ of~$\HH_{\LL/\KK}^c$.  The map
  $N_c\colon(\HH_{\LL/\KK}^c)\to\KK\colon
  A\mapsto\det_\LL A = \widetilde{A}A$ is an anisotropic quadratic
  form (called the \emph{norm} of~$\HH_{\LL/\KK}^c$). The restriction
  of the norm to $(\HH_{\LL/\KK}^c)^\times$ is a group homomorphism
  onto the subgroup $V_{h_c}$ of~$\KK^\times$.

  We extend the definition to cover isotropic but non-degenerate
  forms, as well, considering $\HH_{\LL/\KK}^c = \set{\left(
      \begin{smallmatrix}
        a & -c\widebar{x} \\
        x & \widebar{a}
      \end{smallmatrix}\right)}{a,x\in\LL}$ also in the cases
  where there exists $(x,y)'\in\LL^2\setminus\{(0,0)'\}$ with
  $\gal{x}x+c\gal{y}y=0$. Note, however, that in this case the algebra 
  $\HH_{\LL/\KK}^c$ is smaller than the full set of matrices such that
  $\gal{A}{}'M_{h_c}A=M_{h_c}\det A$; for instance, the latter
  property is satisfied for $\left(
    \begin{smallmatrix}
      0 & x \\
      0 & y 
    \end{smallmatrix}\right)$
  whenever $\gal{x}x+c\gal{y}y=0$. The norm $N_{h_c}$ remains a
  multiplicative quadratic form but $0\in
  N_{h_c}\bigl((\HH_{\LL/\KK}^c)^\times\bigr)$ if the form $h_c$ is
  isotropic. Such a quaternion algebra will be called a %
  \emph{split quaternion algebra}.

  An element $X\in\HH_{\LL/\KK}^c$ is invertible precisely if
  $N_{h_c}(X)\ne0$; the inverse is $\frac1{N_{h_c}(X)}\widetilde{X}$. Thus
  the algebra $\HH_{\LL/\KK}^c$ is a skewfield if, and only if,
  the norm form is anisotropic. 

  In any case, we call $\tr(A):=\widetilde{A}+A$ the \emph{trace}
  of~$A\in\HH_{\LL/\KK}^c$; this is just the usual trace of
  $A\in\LL^{2\times2}$. Note that the trace describes the polar form
  $f_{N_{h_c}}$ via $f_{N_{h_c}}(X,Y) = N_{h_c}(X+Y) - N_{h_c}(X) -
  N_{h_c}(Y) = \widetilde{X}Y+\widetilde{Y}X
  = \tr\bigl(\widetilde{Y}X\bigr)$. Thus the elements perpendicular to
  the neutral element~$1$ are just those with trace~$0$.

  As usual, we identify the field $\KK$ with the subring $\KK1$
  of~$\HH_{\LL/\KK}^c$; this is just the center of the quaternion
  algebra. The embedding of $\LL$ in $\HH_{\LL/\KK}^c$ is slightly
  more delicate: we identify $a\in\LL$ with $\left(
    \begin{smallmatrix}
      a & 0 \\
      0 & \gal{a}
    \end{smallmatrix}\right)$. 
\end{ndef}

\begin{nthm}[Equivalence of Anisotropic Hermitian Forms]\label{HermiteEq}
  Let\/ $\LL/\KK$ be a separable quadratic extension, let $\sigma$ be
  the generator of the Galois group $\Gal{\LL/\KK}$, and let $g$ and
  $h$ be anisotropic $\sigma$-hermitian forms on~$\LL^2$, described by
  their hermitian Gram matrices $M_g,M_h\in\LL^{2\times2}$,
  respectively. Then the following are equivalent:  
  \begin{enum}
  \item\label{formsConjugate} The hermitian matrices (and thus the
    forms) are equivalent up to a scalar, i.e., there exists
    $A\in\GL2\LL$ with $\KK\widebar{A}{}'M_gA = \KK M_h$.
  \item\label{formsDet}
    There exists $w\in\LL^\times$ such that
    $\det M_g=N_{\LL/\KK}(w)\det M_h$.
  \item\label{quatsConjugate} The quaternion fields are conjugates,
    i.e., there is $A\in\GL2\LL$ with $A\HH_{\LL/\KK}^g
    A^{-1}=\HH_{\LL/\KK}^h$.
  \end{enum}
  The forms are equivalent if, and only if, we have $V_g=V_h$ and any
  one of the conditions~\ref{formsConjugate}, \ref{formsDet}
  or~\ref{quatsConjugate} is satisfied. 
  A form $h$ is, in particular, equivalent to $-h$ precisely if $-1\in
  N_h(\HH_{\LL/\KK}^h)$. 

  A set of representatives of equivalence classes of
  hermitian matrices is obtained as
  \[
  \left\{\left(
      \begin{matrix}
        0 & 0 \\
        0 & 0 
      \end{matrix}\right)\right\}
  \cup
  \set{\left(
      \begin{matrix}
        r & 0 \\
        0 & 0 
      \end{matrix}\right)}%
  {r\in R_{N}}
  \cup
  \left\{\left(
      \begin{matrix}
        1 & 0 \\
        0 &-1 
      \end{matrix}\right)\right\}
  \cup
  \set{\left(
      \begin{matrix}
        t & 0 \\
        0 & tr 
      \end{matrix}\right)}%
  {r\in R_{N}, t\in R_{N_r}}
  \]
  where $R_{N}\subseteq\KK^\times$ is a set of representatives for the
  cosets modulo $N_{\LL/\KK}(\LL^\times)$ and
  $R_{N_r}\subseteq\KK^\times$ is a set of representatives for the
  cosets modulo $N_{h_r}\bigl((\HH_{\LL/\KK}^{r})^\times\bigr)$ for
  the anisotropic form $h_r$ with hermitian matrix~$\left(
    \begin{smallmatrix}
      1 & 0 \\
      0 & r
    \end{smallmatrix}\right)$.
\end{nthm}
\begin{proof}
  Picking orthogonal bases with respect to $g$ or~$h$ we find
  $a,b,c,d\in\KK^\times$ such that $M_g$ is equivalent to $\left(
    \begin{smallmatrix}
      a & 0 \\
      0 & b
    \end{smallmatrix}\right)$ and $M_h$ is equivalent to $\left(
    \begin{smallmatrix}
      c & 0 \\
      0 & d
    \end{smallmatrix}\right)$.

  Condition~\ref{formsConjugate} clearly
  implies~\ref{formsDet}.
  If~\ref{formsDet} is satisfied we have
  $(ab)^{-1}cd\in N_{\LL/\KK}(\LL^\times)$. Then $\left(
    \begin{smallmatrix}
      1 & 0 \\
      0 & b/a
    \end{smallmatrix}\right)$ and $\left(
    \begin{smallmatrix}
      1 & 0 \\
      0 & d/c
    \end{smallmatrix}\right)$ are equivalent,
  and~\ref{formsConjugate} holds.

  Finally, we note that conditions~\ref{formsConjugate}
  and~\ref{quatsConjugate} are equivalent because the only $\sigma$-hermitian
  forms invariant under $\HH_{\LL/\KK}^g$ are the scalar multiples
  of~$h$.

  Every non-degenerate isotropic hermitian form on~$\LL^2$ is
  equivalent to the form $h_{-1}$ described by $M_{h_{-1}}=\left(
    \begin{smallmatrix}
      1 & 0 \\
      0 &-1
    \end{smallmatrix}\right)$. 
  The rest of the assertions is clear because $h\in\KK g$ is equivalent
  to~$g$ precisely if $V_h=V_g$. 
\end{proof}

See~\ref{exQuaternions} below for examples of quaternion fields with
different behavior with respect to the quotient
$\KK^\times/N_h\bigl((\HH_{\LL/\KK}^h)^\times\bigr)$ and also
regarding the question whether $-1$ belongs to the group
$N_h\bigl((\HH_{\LL/\KK}^h)^\times\bigr)$ of norms. 

\subsection*{Inner automorphisms of quaternion fields and similitudes
  of the norm form}

\begin{nthm}[Conjugacy in Quaternion Fields]
\label{knarrTrick}\label{conjugacyQuaternions}
  Let\/ $\HH=\HH_{\LL/\KK}^c$ be a quaternion field over a field of
  arbitrary characteristic, with norm form~$N=N_{h_c}$ and
  involution~$\kappa\colon x\mapsto\tilde{x}$.
  \begin{enum}
  \item\label{conjNorm} For $v,x\in\HH$ there exists $a\in\HH^\times$
    with $ava^{-1}=x$ precisely if $v$ and $x$ have the same norm and
    the same trace. In particular, pure quaternions (i.e., quaternions
    with vanishing trace) are conjugates if, and only if, they have
    the same norm.
  \item\label{conjIsom} Now consider $v,w,x,y\in\HH$ such that
    $w\notin\KK v$ and $y\notin\KK x$. There exists $a\in\HH^\times$
    such that $ava^{-1}=w$ and $axa^{-1}=y$ if, and only if, we have
    $N(v)=N(x)$, $N(w)=N(y)$, $\tr(v)=\tr(w)$, $\tr(x)=\tr(y)$, and
    $f_N(v,w)=f_N(x,y)$.
  \item\label{actionZ}
    There exist $a,b\in\HH^\times$ with $av\tilde{a}N(b)=x$ if, and
    only if, there exists $z\in\HH$ such that $N(x)=N(v)N(z)^2$ and
    $\tr(x)=\tr(v)N(z)$. 
  \end{enum}
\end{nthm}
\begin{proof}
  Multiplicativity of the norm form yields that conjugation with~$a$
  induces an orthogonal map on~$\HH$. It remains to prove the 
  non-trivial implications, i.e., those asserting conjugacy.
  
  Assume $N(v)=N(x)$ and $\tr(v)=\tr(x)$.
  If $x\ne\tilde{v}$ then $a:=x-\tilde{v}$ satisfies
  $\tilde{a} = \tilde{x}-v = \tr(x)-x-v = -x+\tilde{v} =
  -a$ and we compute 
  $ava^{-1} = (x-\tilde{v})va^{-1} = (xv-N(v))a^{-1} =
  (xv-N(x))a^{-1} = x(v-\tilde{x})a^{-1} = v$. If
  $x=\tilde{v}$ we pick $a\in\{1,v\}^\perp\setminus\{0\}$. Then
  $0=f_N(a,v)=\tilde{a}v+\tilde{v}a=-av+\tilde{v}a$ gives
  $ava^{-1}=\tilde{v}$.
  This proves assertion~\ref{conjNorm}.

  Under the assumptions made in~\ref{conjIsom}, we may also assume
  $x=v$ because of assertion~\ref{conjNorm}.  It suffices to consider
  the case $w\ne y$. Then $c:=y-w$ satisfies $\tilde{c} =
  (\tr(y)-y)-(\tr(w)-w) = w-y = -c$, and we compute $cwc^{-1} =
  (\tilde{w}-\tilde{y})wc^{-1} = (N(w)-\tilde{y}w)c^{-1} =
  (N(y)-\tilde{y}w)c^{-1} = \tilde{y}$. On the other hand, we
  use $\tilde{v}w+\tilde{w}v = f_{N}(v,w) =
  f_{N}(x,y) = f_{N}(v,y) = \tilde{v}y+\tilde{y}v$
  to compute $cvc^{-1} = (\tilde{w}-\tilde{y})vc^{-1} =
  (\tilde{w}v-\tilde{y}v)c^{-1} =
  (\tilde{v}y-\tilde{v}w)c^{-1} = \tilde{v}$.  It remains
  to pick $b\in\{1,v,y\}^\perp\setminus\{0\}$; then
  $b\tilde{v}b^{-1}=v$ and $b\tilde{y}b^{-1}=y$ complete the
  proof of assertion~\ref{conjIsom}.

  If $x=av\tilde{a}N(b)$ then $N(x)=N(v)N(ab)^2$ and
  $\tr(x)=\tr(ava^{-1}N(a)N(b))=\tr(v)N(ab)$. Conversely, assume that
  there is $z\in\HH^\times$ such that $N(x)=N(v)N(z)^2$ and
  $\tr(x)=\tr(v)N(z)$. Then $x$ and $vN(z)$ have the same norm and the
  same trace, and assertion~\ref{conjNorm} yields $a\in\HH^\times$
  such that $x = avN(z)a^{-1} = av\tilde{a}N(z)N(a)^{-1} = 
  av\tilde{a}N(za^{-1})$.
\end{proof}

\begin{rema}
  The assertions of~\ref{conjugacyQuaternions} do not remain valid if
  we consider a split quaternion algebra. In fact,
  such an algebra will contain divisors of zero, and the element
  $a:=x-\tilde{u}$ used in the proof
  of~\ref{conjugacyQuaternions}.\ref{conjNorm} might be non-invertible
  although $x\ne\tilde{u}$. 
  It is known that every split quaternion algebra is isomorphic to the
  algebra of $2\times2$ matrices over the ground field, with the usual
  determinant playing the role of the norm form. The example of $\left(
    \begin{smallmatrix}
      0 & -1 \\
      1 & 2 
    \end{smallmatrix}\right)$
  and $\left(
    \begin{smallmatrix}
      1 & 0 \\
      0 & 1
    \end{smallmatrix}\right)$ shows
  that indeed~\ref{conjugacyQuaternions}.\ref{conjNorm} does not
  extend to the split case without modification. 
  
  If $\Char\KK\ne2$ it remains true that pure elements (i.e., matrices
  with vanishing trace) are conjugates if, and only if, they have the
  same norm because the trace of a non-trivial central element is
  non-zero. If $\Char\KK=2$, however, our example consist of pure
  elements. 
\end{rema}

\goodbreak
The following result contains a special case of the Skolem--Noether
Theorem for central simple algebras (e.g.,
cf.~\cite[{8.4.2}]{MR770063}).

\begin{theo}\label{SkolemNoether}
  Let\/ $\HH$ be a quaternion field over a field\/ $\KK$ of arbitrary
  characteristic. Then the group $\Aut\HH$ of\/ $\KK$-linear
  automorphisms acts faithfully on the orthogonal space $\Pu\HH:=
  \set{x\in\HH}{\tr(x)=0} = 1^\perp$, inducing the group
  $\SO{}{N|_{\Pu\HH}}$. Every element of\/ $\Aut\HH$ is an inner
  automorphism.
\end{theo}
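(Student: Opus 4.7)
The plan is to establish four facts in sequence: every $\phi\in\Aut\HH$ preserves the reduced trace and norm and so restricts to an isometry of $(\Pu\HH,N|_{\Pu\HH})$; the induced map $\Aut\HH\to\Orth{}{N|_{\Pu\HH}}$ is injective; every $\phi$ is inner; and the image is precisely $\SO{}{N|_{\Pu\HH}}$.

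Since $\phi$ fixes $\KK=Z(\HH)$ pointwise and is a ring homomorphism, it preserves the minimal polynomial of every element of~$\HH$. For $x\in\HH\setminus\KK$ that polynomial is $T^2-\tr(x)T+N(x)$, so $\tr(\phi(x))=\tr(x)$ and $N(\phi(x))=N(x)$; the equality is trivial for $x\in\KK$. Hence $\phi(\Pu\HH)=\Pu\HH$ and $\phi$ restricts to an isometry. For faithfulness note that $\dim_\KK\Pu\HH=3$, whereas every proper $\KK$-subalgebra of a quaternion field has dimension at most~$2$; thus $\Pu\HH$ generates $\HH$ as a $\KK$-algebra, and any $\phi$ trivial on $\Pu\HH$ is trivial on~$\HH$. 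The same dimension estimate shows that, for any $v\in\Pu\HH\setminus\KK$ and any $x\in\Pu\HH\setminus(\KK+\KK v)$, the set $\{1,v,x,vx\}$ is a $\KK$-basis of~$\HH$.

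To prove innerness, fix such $v,x$. Their images $\phi(v),\phi(x)$ lie in $\Pu\HH$, are still $\KK$-linearly independent, and satisfy $N(v)=N(\phi(v))$, $N(x)=N(\phi(x))$, $\tr(v)=\tr(\phi(v))=0=\tr(x)=\tr(\phi(x))$, and $f_N(v,x)=f_N(\phi(v),\phi(x))$. Theorem~\ref{conjugacyQuaternions} produces $a\in\HH^\times$ with $ava^{-1}=\phi(v)$ and $axa^{-1}=\phi(x)$. Since $\iota_a$ and~$\phi$ are algebra homomorphisms agreeing on the basis $\{1,v,x,vx\}$, they agree throughout, so $\phi=\iota_a$.

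For the final identification with $\SO{}{N|_{\Pu\HH}}$, compute $\iota_v$ for pure $v$ with $N(v)\ne0$: using $\tilde v=-v$ and $v^2=-N(v)$ one finds that $\iota_v$ fixes $v$ and acts as $-1$ on $\{v\}^\perp\cap\Pu\HH$, so in characteristic $\ne 2$ the map $-\iota_v$ is the reflection of $\Pu\HH$ in the hyperplane $\{v\}^\perp\cap\Pu\HH$; Cartan--Dieudonn\'e then writes every element of $\SO{}{N|_{\Pu\HH}}$ as $(-\iota_{v_1})(-\iota_{v_2})=\iota_{v_1v_2}$, which is inner. The main obstacle lies in characteristic~$2$, where $N|_{\Pu\HH}$ is degenerate with radical $\KK\cdot 1$, the maps $\iota_v$ become transvections $y\mapsto y+(f_N(v,y)/N(v))v$, and $\SO$ must be interpreted through the Dickson invariant; this case is handled by combining the transvection calculation with the transitivity of inner automorphisms on pure quaternions of each fixed non-zero norm given by Theorem~\ref{conjugacyQuaternions}.
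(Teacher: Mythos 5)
Your argument for $\Char\KK\ne2$ is sound and in places takes a genuinely different, arguably cleaner route than the paper's. The preservation of trace and norm via minimal polynomials, the faithfulness via generation of $\HH$ by $\Pu\HH$, and above all the \emph{direct} proof of innerness --- conjugating the pair $(v,x)$ onto $(\phi(v),\phi(x))$ by \ref{conjugacyQuaternions}.\ref{conjIsom} and then matching $\phi$ with $\iota_a$ on the basis $\{1,v,x,vx\}$ --- are all correct. (The paper obtains innerness only indirectly, by showing that the inner automorphisms act regularly on the orbit of such a pair, so that nothing else can act.) Two small omissions in odd characteristic: you prove $\SO{}{N|_{\Pu\HH}}$ is contained in the image of $\Inn\HH$ via Cartan--Dieudonn\'e, but never verify the reverse containment, i.e.\ that no automorphism induces an element of $\Orth{}{N|_{\Pu\HH}}\setminus\SO{}{N|_{\Pu\HH}}$. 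This does follow from your own reflection computation once you observe that every $a\in\HH^\times$ factors as $a=v_1v_2$ with $v_1,v_2$ pure (take $v_1\in\Pu\HH\cap a^\perp\setminus\{0\}$ and $v_2:=v_1^{-1}a$), so that $\iota_a=(-\iota_{v_1})(-\iota_{v_2})$ is a product of two reflections; but it needs to be said.

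The genuine gap is the characteristic-$2$ case, which the theorem explicitly covers and which you compress into one sentence. There $1\in\Pu\HH$, the polar form $f_N|_{\Pu\HH}$ has radical $\KK\cdot1$, the maps $\iota_v$ are the transvections you describe, and Cartan--Dieudonn\'e in the form you invoke is unavailable. ``Combining the transvection calculation with the transitivity of inner automorphisms on pure quaternions of each fixed non-zero norm'' is not yet an argument: transitivity on single vectors does not determine a group, and you supply neither a generation theorem for the relevant (defective) orthogonal group by these transvections nor a working definition of $\SO{}{N|_{\Pu\HH}}$ in this situation. The paper closes exactly this point with a characteristic-free \emph{regularity} argument: pick $x\in\Pu\HH\setminus\KK$ and $y\in\Pu\HH\setminus\smallspnK{1,x}$, both outside the radical of $f_N|_{\Pu\HH}$; any element of $\SO{}{N|_{\Pu\HH}}$ carries $(x,y)$ to a pair with the same norms, traces and $f_N$-value, hence to a pair reachable by an inner automorphism by \ref{conjugacyQuaternions}.\ref{conjIsom}, while the stabilizer of $(x,y)$ in $\SO{}{N|_{\Pu\HH}}$ is trivial (in characteristic~$2$ such an element also fixes the radical vector $1$, because $N$ is injective on $\KK$, and therefore fixes the basis $\{1,x,y\}$ of $\Pu\HH$). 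Substituting this regularity argument for your Cartan--Dieudonn\'e step handles both characteristics at once and would complete your proof.
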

\begin{proof}
  For any $x\in\HH$ the assumption $x^2\in\KK$ implies
  $N(x)+x^2=\tr(x)x\in\KK$ and then $x\in\KK\cup\Pu\HH$. Together
  with the fact that the center of~$\HH$ is just $\KK$, the equality
  $\KK\cup\Pu\HH = \set{x\in\HH}{x^2\in\KK}$ now yields that
  $\Pu\HH$ is invariant under $\Aut\HH$.
  
  Pick $x\in\Pu\HH\setminus\KK$ and
  $y\in\Pu\HH\setminus\spnK{1,x}$. Then $\spnK{1,x}$ is a subfield
  of~$\HH$ and $xy\notin\spnK{1,x,y}$ because $xy=r+sx+ty$ with
  $r,s,t\in\KK$ implies that $(x-t)y\in\spnK{1,x}$ and then
  $y\in(x-t)^{-1}\spnK{1,x} =\spnK{1,x}$, a contradiction. Therefore,
  the set $\Pu\HH$ generates the quaternion field as a $\KK$-algebra,
  and $\Aut\HH$ acts faithfully on~$\Pu\HH$; in fact, the stabilizer
  of $x$ and $y$ is trivial.

  Since $N(x)=-x^2$ holds for $x\in\Pu\HH$ the group $\Aut\HH$ acts by
  orthogonal maps on~$\Pu\HH$. Choosing a suitable $\KK$-basis for
  $\HH$ one easily sees that the $\KK$-determinant of any left
  multiplication $x\mapsto ax$ is the square of~$N(a)$.  The right
  multiplication $x\mapsto xa=\widetilde{\tilde{a}\tilde{x}}$ then has
  determinant $N(\tilde{a})=N(a)$, and every inner automorphism has
  determinant~$1$. Therefore, $\Aut\HH$ is a subgroup of $\SO{}{N}$
  and induces a subgroup of $\SO{}{N|_{\Pu\HH}}$. 

  According
  to~\ref{conjugacyQuaternions}.\ref{conjIsom} the group of inner
  automorphisms acts transitively on any set of two-dimensional
  subspaces of given isometry type in~$\Pu\HH$. The elements $x$ and
  $y$ above were chosen outside the radical\footnote{ This precaution
    is only relevant if $\Char\KK=2$.} of the restriction of the
  polar form $f_N$ to $\Pu\HH$. Therefore the stabilizer 
  of $x$ and $y$ (as above) is trivial even in the group
  $\SO{}{N|_{\Pu\HH}}$, and the transitivity assertion
  from~\ref{conjugacyQuaternions}.\ref{conjIsom} yields that the group
  of inner automorphisms coincides with $\SO{}{N|_{\Pu\HH}}$.  

  If $\Char\KK\ne2$ it remains to show that no element of the coset\/
  $\Orth{}{N_{\Pu\HH}}\setminus\SO{}{N_{\Pu\HH}}$ is induced by an
  element of~$\Aut\HH$. As the involution $x\mapsto\tilde{x}$ is not
  an automorphism but represents the coset this is another consequence
  of the fact that each element of $\SO{}{N|_{\Pu\HH}}$ is induced by
  an inner automorphism. 
\end{proof}

\subsection*{Similitudes of quadratic forms}

\begin{defi}
  Let\/ $q\colon V\to\KK$ be a quadratic form. A \emph{similitude}
  of\/~$q$ with \emph{multiplier}~$r$ is a linear bijection
  $\lambda\colon V\to V$ such that $q(\lambda(v))=rq(v)$ holds for
  each $v\in V$. The set $\GO{}{q}$ of all similitudes forms a
  subgroup of~$\GL{}{V}$. 

  If $q\ne0$ then every multiplier is non-zero. In this case, mapping
  $\lambda\in\GO{}{q}$ to its multiplier~$\mu_\lambda$ gives a group
  homomorphism $\mu\colon\GO{}{q}\to\KK^\times$ called the
  \emph{multiplier map}.

  The range of the multiplier map contains~$\KK^\sqt$ because $s\,\id$
  is a similitude with multiplier~$s^2$.
\end{defi}

Every diagonalizable quadratic form $q\colon V\to\KK$ over a
field~$\KK$ of characteristic~$2$ is additive. Regarding~$\KK$ as a
vector space over~$\KK^\sq$ we may then view~$q$ as a semilinear map
with respect to the field isomorphism $\frob\colon\KK\to\KK^\sq\colon
x\mapsto x^2$.  The form $q$ is non-degenerate if, and only if, the
kernel of this semilinear map is trivial. Thus the form is
non-degenerate precisely if it is anisotropic.

\goodbreak
\begin{theo}\label{diagAniso}
  Let $q\colon V\to\KK$ be a diagonalizable non-degenerate quadratic
  form over a field\/~$\KK$ with $\Char\KK=2$.
  \begin{enum}
  \item\label{Otrivial}
    The form $q$ is anisotropic and $\Orth{}{q}$ is trivial.
  \item The subset\/ $\{0\}\cup\GO{}{q}$ of\/ $\End[\KK]{V}$ is a
    field, and the multiplier map $\mu$ extends to an isomorphism onto
    the
    subfield\/~$\MM:=\{0\}\cup\smallset{\mu(\lambda)}{\lambda\in\GO{}{q}}$
    of\/~$\KK$.
  \item\label{purelyInsep}
    We have $\KK^\sq\le\MM$ and\/ $\MM/\KK^\sq$ is a purely
    inseparable extension.
  \item\label{powerTwo} If\/ $\dim_\KK{V}$ is finite then\/ $\dim_\KK
    V=(\dim_\MM{V})(\dim_{\KK^\sq}\MM)$ and\/ $\dim_{\KK^\sq}\MM$ is a
    power of\/~$2$.
  \item\label{oddScalar} In particular, if\/ $\dim_\KK{V}$ is odd then
    $\GO{}{q}=\KK^\times\,\id = \mu^{-1}(\KK^\sqt)$.
  \item\label{sharpTrsGO} In any case, the group $\GO{}{q}$ acts regularly on
    $V\setminus\{0\}$. In particular, if\/ $G\le\GO{}{q}$ is
    transitive on $V\setminus\{0\}$ then $G=\GO{}{q}$.
  \end{enum}
\end{theo}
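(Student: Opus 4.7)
The key observation I would use throughout is that in characteristic~$2$ a diagonalizable form $q(v)=\sum a_i v_i^2$ is additive, since $q(v+w)=\sum a_i(v_i+w_i)^2=q(v)+q(w)$ in the absence of cross terms. For~\ref{Otrivial}, non-degeneracy of the associated Frobenius-semilinear map is by hypothesis equivalent to anisotropy, and for $\lambda\in\Orth{}{q}$ additivity yields $q(\lambda v+v)=q(\lambda v)+q(v)=2q(v)=0$, so anisotropy forces $\lambda v=v$ for every $v$ and hence $\lambda=\id$.

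For part~(b) I would verify that $\mathcal{A}:=\{0\}\cup\GO{}{q}$ is a commutative subring of $\End[\KK]{V}$. Closure under composition and inverses is immediate from the multiplicativity of multipliers. Closure under addition follows from $q\bigl((\lambda+\lambda')v\bigr)=(r+r')q(v)$: when $r+r'=0$ anisotropy gives $\lambda+\lambda'=0$, otherwise anisotropy ensures that $\lambda+\lambda'$ has trivial kernel and (in finite dimension) is bijective with multiplier $r+r'$. Extending $\mu$ by $\mu(0):=0$ and using anisotropy to rule out a nontrivial kernel, one obtains an injective ring homomorphism $\mu\colon\mathcal{A}\to\KK$ with image $\MM$; commutativity transfers back via injectivity, so $\mathcal{A}$ is a field.

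Assertion~\ref{purelyInsep} is then immediate: $\mu(c\,\id)=c^2$ gives $\KK^\sq\subseteq\MM$, and $m^2\in\KK^\sq$ whenever $m\in\MM\subseteq\KK$ shows that $\MM/\KK^\sq$ is purely inseparable of exponent at most one. For~\ref{powerTwo}, the inclusion $\KK\,\id\subseteq\mathcal{A}$ makes $V$ an $\mathcal{A}$-vector space whose restriction to the subfield $\KK\,\id$ recovers the original $\KK$-structure; the tower formula gives $\dim_\KK V=\dim_\mathcal{A} V\cdot[\mathcal{A}:\KK\,\id]$, and $\mu$ identifies $[\mathcal{A}:\KK\,\id]$ with $[\MM:\KK^\sq]$, which is a power of~$2$ by the structure theorem for finite purely inseparable extensions of exponent one. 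Part~\ref{oddScalar} follows at once, since a power of~$2$ dividing an odd integer must equal~$1$, whence $\MM=\KK^\sq$ and $\GO{}{q}=\KK^\times\,\id$.

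Finally for~\ref{sharpTrsGO}: if $\lambda\in\GO{}{q}$ fixes some $v\neq0$, then $\mu(\lambda)q(v)=q(v)\neq0$ gives $\mu(\lambda)=1$, so $\lambda=\id$ by~\ref{Otrivial}, and $\GO{}{q}$ acts freely on $V\setminus\{0\}$. If $G\le\GO{}{q}$ is transitive on $V\setminus\{0\}$ then so is $\GO{}{q}$, and freeness forces $|G|=|V\setminus\{0\}|=|\GO{}{q}|$, giving $G=\GO{}{q}$. The most delicate point I anticipate is the closure of $\mathcal{A}$ under addition in~(b): bijectivity of $\lambda+\lambda'$ in the case $r+r'\neq0$ genuinely needs finite-dimensionality, which is only explicit from~\ref{powerTwo} onward.
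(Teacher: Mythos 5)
Your argument follows the same route as the paper's: additivity of a diagonal form in characteristic two, injectivity of $q$ as a Frobenius-semilinear map (giving anisotropy and triviality of $\Orth{}{q}$), additive closure of $\{0\}\cup\GO{}{q}$ via the multiplier computation, and the purely-inseparable/tower argument for the degree statements. Parts (a)--(e) are correct; your explicit flagging of the bijectivity of $\lambda+\lambda'$ when $r+r'\ne0$ is legitimate (the paper passes over this silently), and in the finite-dimensional situations where the theorem is applied, your injective-implies-bijective resolution suffices.

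The one step that does not work as written is the second half of (f). You argue that transitivity of $G$ together with freeness gives $|G|=|V\setminus\{0\}|=|\GO{}{q}|$ and hence $G=\GO{}{q}$; but equality of cardinalities of an infinite group and an infinite subgroup does not force the groups to coincide, and the fields of interest here are typically infinite (the paper emphasizes non-perfect infinite fields elsewhere). The correct deduction uses only ingredients you already have: given $\lambda\in\GO{}{q}$ and a fixed $v_0\ne0$, transitivity of $G$ yields $g\in G$ with $g(v_0)=\lambda(v_0)$, so $g^{-1}\lambda$ fixes $v_0$ and is therefore the identity by the freeness you established; hence $\lambda=g\in G$. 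With that replacement the proof is complete and agrees in substance with the paper's.
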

\begin{proof}
  Every isotropic vector belongs to the kernel of the semi-linear
  map~$q$. This kernel coincides with the radical of~$q$ because the
  form is diagonal. Thus $q$ is an injective map. This yields that
  $\Orth{}{q}$ is trivial, as claimed.

  For similitudes $\lambda,\sigma\in\GO{}{q}$ and $v\in V$ we compute
  $q\left((\lambda+\sigma)(v)\right) =
  q\left(\lambda(v)\right)+q\left(\sigma(v)\right) =
  \mu(\lambda)\,q(v)+\mu(\sigma)\,q(v) =
  \left(\mu(\lambda)+\mu(\sigma)\right)\,q(v)$. This shows that
  $\lambda+\sigma$ is a similitude with multiplier
  ${\mu(\lambda)+\mu(\sigma)}$ unless $\mu(\lambda)=\mu(\sigma)$. In
  the latter case we have $\lambda\sigma^{-1}\in\Orth{}{q}$ and
  $\lambda=\sigma$ by assertion~\ref{Otrivial}. Thus
  $\{0\}\cup\GO{}{q} \subseteq \End[\KK]{V}$ is additively closed and
  $\mu$ extends to an additive map onto~$\MM$. Since $\mu$ is
  multiplicative anyway, this extension is a field isomorphism.

  The similitudes in $\KK^\times\,\id$ are mapped to the elements of
  $\KK^\sqt$ under~$\mu$. For every $m\in\MM$ the minimal polynomial
  over~$\KK^\sq$ divides $X^2-m^2\in\KK^\sq[X]$, and
  assertion~\ref{purelyInsep} is proved.

  From assertion~\ref{purelyInsep} we infer that every intermediate
  field between $\KK^\sq$ and~$\MM$ either has infinite degree over
  $\KK^\sq$ or that degree is a power of~$2$. This gives the last two
  assertions~\ref{powerTwo} and~\ref{oddScalar}.

  The last assertion follows from the fact that $\Orth{}{q}$ is
  trivial. 
\end{proof}

\begin{exam}
  Let $\LL/\KK$ be a purely inseparable extension with
  $\Char\KK=2$. Then $\LL^\sq\le\KK$ and the norm
  $N_{\LL/\KK}(x)\colon \LL\to\KK\colon x\mapsto x^2$ is a quadratic
  form. This form is diagonal, and $\GO{}{N_{\LL/\KK}}=\LL^\times$
  by~\ref{diagAniso}.\ref{sharpTrsGO}. Forms of this type will occur
  in Section~\ref{sec:quaternions} below. 
\end{exam}

The following observation will be useful because it constrains the
orbits under groups of similitudes. 

\begin{lemm}\label{onlySquareFactors}
  Let\/ $q\colon V\to \KK$ be a non-degenerate quadratic form, and
  assume that $\dim V$ is odd. Then every multiplier is a square,
  and\/  $\GO{}{q}=\KK^\times\,\SO{}{q}$.
\end{lemm}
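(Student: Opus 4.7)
The plan is to first reduce the second assertion to the first: once I know that every multiplier $r$ is a square, say $r=s^2$, then $s^{-1}\lambda$ lies in $\Orth{}{q}$, and since $\dim V$ is odd I can adjust by the scalar $-1$ (which has multiplier $1$ and determinant $-1$ on an odd-dimensional space) to land in $\SO{}{q}$. In characteristic $2$ no adjustment is needed because $\Orth{}{q}=\SO{}{q}$. Thus the substance of the lemma is to prove that every multiplier is a square, and I will treat the two characteristics separately.

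For $\Char\KK\ne2$, the form is determined by its polar form $f_q$, and a similitude $\lambda$ of $q$ with multiplier $r$ is a similitude of $f_q$ with the same multiplier. Writing the relation $\lambda'B\lambda = rB$ for the Gram matrix $B$ of $f_q$ and taking determinants gives $\det(\lambda)^2 = r^{\dim V}\det(B)/\det(B) = r^{\dim V}$. Since $\dim V=2k+1$ is odd, I can isolate $r = \bigl(\det(\lambda)/r^k\bigr)^2$, a square. This is the one routine computation I need; it is the only place where $\dim V$ being odd truly enters in the classical case.

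For $\Char\KK=2$, the determinant argument fails because $\det(\lambda)=1$ automatically and $f_q$ is degenerate. Instead I look at the radical $W:=\set{v\in V}{\forall u\in V\colon f_q(v,u)=0}$ of the polar form. Since $V/W$ carries a non-degenerate alternating form, $\dim V/W$ is even, so $\dim W$ is odd. Any similitude $\lambda$ of $q$ is a similitude of $f_q$, and the relation $f_q(\lambda v,u)=r f_q(v,\lambda^{-1}u)$ shows $\lambda(W)=W$. The restriction $q|_W$ is anisotropic (else we would contradict non-degeneracy of $q$) and it is diagonalizable because on $W$ the identity $q(v+w)=q(v)+q(w)$ holds. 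Thus $q|_W$ is a diagonalizable non-degenerate quadratic form on an odd-dimensional space, and Theorem~\ref{diagAniso}.\ref{oddScalar} applies to force $\lambda|_W=s\,\id_W$ for some $s\in\KK^\times$. The multiplier of $\lambda|_W$ coincides with the multiplier $r$ of $\lambda$, so $r=s^2$.

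The main obstacle is the characteristic $2$ case, where the polar form's degeneracy destroys the classical determinant trick; the key idea is to transfer the problem to the radical $W$, where the form is diagonalizable and the sharp information of Theorem~\ref{diagAniso}.\ref{oddScalar} becomes available. Everything else, including the passage from $\Orth{}{q}$ to $\SO{}{q}$, is a direct bookkeeping step.
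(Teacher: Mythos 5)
Your proposal is correct and follows essentially the same route as the paper: the determinant/discriminant computation for $\Char\KK\ne2$, restriction to the (odd-dimensional, invariant) radical of the polar form combined with Theorem~\ref{diagAniso} for $\Char\KK=2$, and the decomposition $\Orth{}{q}=\SO{}{q}\cup(-\id)\,\SO{}{q}$ for the final assertion. Your explicit justification that $q$ restricted to the radical is anisotropic (hence non-degenerate as a diagonal form) is a point the paper leaves implicit.
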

\begin{proof}
  Assume first that $\Char\KK\ne2$. Then the set of all determinants of Gram
  matrices for~$q$ is a square class which we denote by $\disc q$, and a
  similitude $\lambda$ with multiplier~$s$ will multiply $\disc q$ by
  $(\det\lambda)^2$. %
  On the other hand, we have $\disc(sq)=s^{\dim V}\disc{q}$. %
  Since $\dim V$ is odd, we immediately obtain that $s$ is a square.

  If $\Char\KK=2$ and the form is not diagonalizable then the Gram
  matrix cannot be chosen in such a way that this choice is invariant
  under linear transformations. However, the polar form $f_q$ is
  alternating in that case, and has even rank. The radical $V^\perp$
  of the polar form is invariant under any similitude, and has odd
  dimension because $\dim{V}$ is odd. The restriction of~$q$
  to~$V^\perp$ is diagonalizable, and we know from~\ref{diagAniso}
  that $s$ is a square in this case, too.

  For any $\mu\in\GO{}{q}$ with multiplier~$s^2$ we note that
  $s^{-1}\mu$ belongs to~$\Orth{}{q}$, and
  $\Orth{}{q}=\SO{}{q}\cup(-\id)\,\SO{}{q}$. Thus the last assertion
  follows.
\end{proof}

If $\dim V$ is even, the situation is more involved: depending on the
form and the field in question, non-squares may occur as multipliers.

\begin{coro}\label{fullGO3}
  For any quaternion field
  $\HH$ we have
  \begin{eqnarray*}
  \GO{}{N|_{\Pu{\HH}}} =
  \KK^\times\SO{}{N|_{\Pu{\HH}}} 
  & = &
  \set{(x\mapsto{}saxa^{-1})}{a\in\HH^\times,s\in\KK^\times} \\
  & = &
  \set{(x\mapsto{}sax\tilde{a})}{a\in\HH^\times,s\in\KK^\times}
  \,.
\qedhere
\end{eqnarray*}
\end{coro}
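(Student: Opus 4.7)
The plan is to combine Lemma~\ref{onlySquareFactors} with Theorem~\ref{SkolemNoether}, and then perform a trivial rewriting to move between the two parameterizations. The space $\Pu\HH = 1^\perp$ is three-dimensional (complementary to the center~$\KK$ in the four-dimensional algebra $\HH$, with $1 \notin 1^\perp$ in odd characteristic, and $1 \in \Pu\HH$ but still $\dim\Pu\HH = 3$ in characteristic~$2$). The restriction $N|_{\Pu\HH}$ is anisotropic since the norm of a quaternion field is anisotropic on all of $\HH$, hence it is non-degenerate as a quadratic form.

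First I would apply Lemma~\ref{onlySquareFactors} to $q := N|_{\Pu\HH}$: because $\dim_\KK\Pu\HH = 3$ is odd and $q$ is non-degenerate, every multiplier of a similitude is a square and $\GO{}{N|_{\Pu\HH}} = \KK^\times\SO{}{N|_{\Pu\HH}}$. This establishes the first equality of the corollary. Next, by Theorem~\ref{SkolemNoether}, the group $\SO{}{N|_{\Pu\HH}}$ is induced by inner automorphisms of $\HH$, i.e., each element of $\SO{}{N|_{\Pu\HH}}$ is of the form $x \mapsto axa^{-1}$ for some $a\in\HH^\times$, and conversely each such map preserves $\Pu\HH$ and the norm. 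Multiplying by scalars $s\in\KK^\times$ yields the second description $\{(x\mapsto saxa^{-1}) : a\in\HH^\times,\, s\in\KK^\times\}$.

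For the last equality I would use the identity $a\tilde a = N(a) \in \KK^\times$, which gives $\tilde a = N(a)a^{-1}$ and therefore
\[
  sax\tilde a \;=\; sN(a)\cdot axa^{-1}
\]
for every $x\in\Pu\HH$. Since $N(a)\in\KK^\times$ and $s\in\KK^\times$, the substitution $s' := sN(a)$ is, for fixed $a$, a bijection of $\KK^\times$ onto itself. Hence the parameter sets $\{(s,a)\}$ for the two descriptions produce exactly the same subset of $\GL{}{\Pu\HH}$, yielding the third equality.

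The only subtle point, and the one I would double-check, is the non-degeneracy hypothesis of Lemma~\ref{onlySquareFactors} in characteristic~$2$: there the polar form on $\Pu\HH$ acquires the one-dimensional radical $\KK\cdot 1$, but the proof of that lemma is written precisely to handle the degenerate polar form via the (odd-dimensional) radical $V^\perp$ and the diagonalizable restriction $q|_{V^\perp}$ covered by~\ref{diagAniso}. With this caveat handled, the corollary is an immediate assembly of the two preceding theorems and a one-line algebraic manipulation.
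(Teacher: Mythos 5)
Your proof is correct and is exactly the argument the paper intends: the corollary carries no separate proof precisely because it is the immediate combination of Lemma~\ref{onlySquareFactors} (odd dimension, non-degeneracy of the anisotropic restriction, including the characteristic-two radical issue you rightly flag) with Theorem~\ref{SkolemNoether}, followed by the reparametrization $\tilde a = N(a)a^{-1}$. Nothing to add.
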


\section{Classification of reduced Heisenberg algebras}\label{sec:classHeis}
We need some explicit notation for the action of $\GL n\KK$ on $\Sk n$. 

\begin{nota}\label{oddNotation}
  Let $\KK$ be a field, and let $b_0,\ldots,b_{n-1}$
  denote  the standard basis for~$\KK^n$.  We will think of elements 
  $v=\sum_{j<n}v_jb_j\in \KK^n$ as columns, the transpose is written
  $v':=(v_0,\ldots,v_{n-1})$. For $v,w\in \KK^n$ we obtain the
  \emph{decomposable tensor} $v\otimes w:=vw'=(v_jw_k)_{j,k<n}$.
  The elements $b_j\otimes b_k$,
  with $j,k<n$, form the standard basis for the space $\M nn$ of
  $(n\times n)$~--~matrices with entries from~$\KK$.

  The set of \emph{alternating tensors} is the linear span $\Sk n$ of
  the elements of the form $v\wedge w:=v\otimes w-w\otimes v$.  These
  are the skew-symmetric matrices with zero diagonal (the latter
  condition follows from the former unless $\Char\KK=2$).  The
  elements $\s jk:=b_j\wedge b_k$ with $0\le j<k<n$ form a basis
  for~$\Sk n$.  With the bilinear map $\eta\colon \KK^n\times
  \KK^n\to\Sk n\colon (v,w)\mapsto v\wedge w$, we have $(\Sk n,\eta)$
  as an explicit model for the exterior product, satisfying the
  universal property: for every alternating bilinear map $\beta\colon
  V\times V\to Z$, there is a unique linear map $\hat\beta\colon \Sk
  n\to Z$ such that $\beta=\hat\beta\circ\eta$.
  
  The linear action of the group $\GL n\KK$ on $\KK^n$ induces a
  linear action on the tensor product: 
  \[
  \omega\colon \GL n\KK\times(\KK^n\otimes \KK^n)\to
  \KK^n\otimes\KK^n \colon(A,M)\mapsto AMA' \,.
  \]
  Obviously, the set $\Sk{n}$ is invariant under this action.
  We write $A.X:=AXA'$.
\end{nota}

\begin{ndef}[More Notation]
  The space $\Sk2$ is spanned by $i:=\left(
    \begin{smallmatrix}
      0 & 1 \\
      -1 & 0 
    \end{smallmatrix}\right) = (1,0)'\wedge(0,1)'$.
  We will use this notation for blocks in elements of $\Sk4$ later
  on. By $E_j$ we denote the identity matrix in $\KK^{j\times j}$. 
\end{ndef}

\begin{ndef}[The Pfaffian Form and the Klein Quadric]\label{O3Kdef}
  We will be interested in the Grassmann space $\G d4$ of
  $d$-dimensional subspaces of ${\Sk4}$, where $d\in\{1,2,3,4,5\}$.
  Note that $\omega$ induces an action on each one of the Grassmann
  spaces. 
  The following belongs to classical line geometry; details and proofs
  may also be found in~\cite{MR2431124}.
  
The group $\GL4\KK$ acts with exactly three orbits on $\Sk 4$,
represented by $0$, $\s01$ and $\s01+\s23$. Therefore, there are two orbits
on~$\G14$. The orbit of $\smallspnK{\s01}$ consists of subspaces $\smallspnK{X}$ with 
$X\in\Sk4\setminus\{0\}$ and $\det X=0$. We use the basis
$\s01,\s02,\s03,\s12,\s13,\s23$ to introduce homogeneous
coordinates  
$[x_0,\ldots,x_5]$ for~$\smallspnK{X}$, where 
\[
 X =    x_0\s01 + x_1\s02 + x_2\s03 
                + x_3\s12 + x_4\s13 
                          + x_5\s23 
   = 
 \left(\begin{array}{cccc}
          0 &  x_0 &  x_1  &  x_2 \\
       -x_0 &   0  &  x_3  &  x_4 \\ 
       -x_1 & -x_3 &   0   &  x_5 \\
       -x_2 & -x_4 & -x_5  &   0  
         \end{array}\right) \,.
\]
The orbit of $\smallspnK{\s01}$ may then be described as the quadric
$Q$ (known as the Klein quadric) defined by the \emph{Pfaffian form}
$\pfaff(x_0,x_1,x_2,x_3,x_4,x_5):=x_0x_5-x_1x_4+x_2x_3$,
cf.~\cite[\S\,5, no.\,2]{MR0107661}. This is a quadratic form of Witt index~$3$ on
$\Sk4$.  The complement of~$Q$ is the second orbit in~$\G14$.

We re-arrange the basis, using $\s01,\s02,\s03,\s23,-\s13,\s12$. With
respect to the new basis, the Pfaffian form itself may be described 
as $\pfaff(v)=v'M_\pfaff v$, 
and the polar form~$f_\pfaff$ becomes $f_\pfaff(v,w)=v'Jw$ with 
the Gram matrix~$J$, where~$M_\pfaff,J\in\KK^{6\times6}$ are defined as
$M_\pfaff:=\left(
  \begin{smallmatrix}
    0 & E_3 \\
    0 & 0 
  \end{smallmatrix}\right)$
and $J:=M_\pfaff+M_\pfaff':=\left(
  \begin{smallmatrix}
    0 & E_3 \\
    -E_3 & 0 
  \end{smallmatrix}\right)$. 

The group $\GL4\KK$ acts by similitudes with respect to~$\pfaff$.  This
yields a homomorphism $\delta$ from $\GL4\KK$ to $\GO{}{\pfaff}$ with
kernel $\{\id,-\id\}$.  We will use the induced groups $\PGL4\KK$ and
$\PGO6\KK$ on the projective spaces (or on the quadric): the
homomorphism $\delta$ induces an isomorphism from $\PGL4\KK$ onto a
subgroup $\PGOplus{}{\pfaff}$ of index~$2$ in~$\PGO{}{\pfaff}$,
see~\cite[3.11]{MR2431124}.  
\end{ndef}

The Klein quadric provides a model for the space $\mathcal L$ of lines in the
$3$-dimensional projective space over~$\KK$ via the bijection
$\lambda\colon \mathcal L\to Q\colon \smallspnK{u,\,v}\mapsto\smallspnK{u\wedge v}$.
This can be used to understand the action of $\GL4\KK$ as
$\PGOplus{}{\pfaff}$, cf.~\cite[3.4\,ff]{MR2431124}: 

\begin{lemm}\label{genericOrbits} 
  \begin{enum}
  \item\label{pairsOfLines} The group\/ $\GL4\KK$ acts with precisely
    three orbits on the set of pairs of lines, represented by the pairs\/
    $(L_0,L_0)$, $(L_0,K_0)$, and\/ $(L_0,K_1)$, where
    $L_0:=\smallspnK{b_0,b_1}$ and\/ $K_j:=\smallspnK{b_0+jb_2,b_3}$.
  \item\label{confluence}
    Two lines $K,L\in\mathcal L$ share a point if, and only if, their
    images $\lambda(K)$ and $\lambda(L)$ are orthogonal with respect
    to~$q$.
  \item 
  The maximal totally singular subspaces with respect to~$q$ are just the
  images of maximal sets of pairwise confluent lines. There are two types of
  such sets: pencils $\mathcal L_p:=\set{L\in\mathcal L}{p<L}$ and, dually,
  line sets of planes $\mathcal L_P:=\set{L\in\mathcal L}{L<P}$. 
  \item\label{Mthree}
  The action of~$\GL4\KK$ on the set $\mathcal M_3$ of maximal totally
  singular subspaces has two orbits, represented by
  $\lambda(\mathcal L_p) =\smallspnK{\s01,\,\s02,\,\s03}$ and
  $J(\lambda(\mathcal L_p))=\lambda(\mathcal L_P)=\smallspnK{\s12,\,\s13,\,\s23}$,
  where $p=\smallspnK{b_0}$, and $P=\smallspnK{b_1,\,b_2,\,b_3}$. 
  \item\label{Mtwo} The group $\GL4\KK$ acts transitively on the set
    $\mathcal M_2$ of $2$-dimensional maximal totally singular
    subspaces.
    We may use $\lambda(\mathcal L_p) \cap \lambda(\mathcal L_{P'}) =
    \smallspnK{\s01,\,\s02}$ as a representative. 
\qed
  \end{enum}
\end{lemm}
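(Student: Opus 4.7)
The central tool is the Plücker--Klein correspondence $\lambda\colon\mathcal L\to Q$ together with the identification of the polar form $f_\pfaff$ on $\Sk4$ with the wedge pairing $\Sk4\times\Sk4\to\wedge^4\KK^4\cong\KK$. My plan is to prove part~\ref{confluence} first, since the remaining assertions all rest on it.

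For part~\ref{confluence}, I would verify in the coordinates $\s01,\ldots,\s23$ that $f_\pfaff(u\wedge v,\, x\wedge y)$ agrees, up to a fixed non-zero scalar, with the coefficient of $b_0\wedge b_1\wedge b_2\wedge b_3$ in $u\wedge v\wedge x\wedge y$. This coefficient vanishes iff $\{u,v,x,y\}$ is linearly dependent in $\KK^4$, iff $\smallspnK{u,v}\cap\smallspnK{x,y}\ne\{0\}$, iff the two projective lines meet. For part~\ref{pairsOfLines}, observe that $\GL4\KK$ acts transitively on ordered pairs of linearly independent vectors of $\KK^4$ and hence on $\mathcal L$. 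It then suffices to show that the stabilizer of $L_0$ has three orbits on $\mathcal L$: the singleton $\{L_0\}$, the lines meeting $L_0$ in a unique projective point, and the lines skew to $L_0$. Each claim follows by a Witt-style basis extension: given $M$ meeting $L_0$ at a point $p$, pick a second generator of $L_0$ together with a generator of $M$ off $L_0$ and extend to a basis of~$\KK^4$; given $M$ skew to $L_0$, simply concatenate bases of $L_0$ and $M$. The triples $(L_0,L_0), (L_0,K_0), (L_0,K_1)$ realise the three cases.

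Parts~(c)--(e) now follow smoothly from~\ref{confluence}. A totally singular subspace of $\Sk4$ corresponds under $\lambda^{-1}$ to a family $\mathcal F$ of pairwise confluent lines in $\PP^3$, and the classical pencil--plane dichotomy says that such a family either has a common point or lies in a common plane: if three lines pairwise meet without sharing a common point, then a dimension count forces them into a common plane. A maximal such family is therefore either $\mathcal L_p$ or $\mathcal L_P$, and both images are $3$-dimensional, yielding part~(c). For part~\ref{Mthree}, transitivity of $\GL4\KK$ on points and on planes of~$\PP^3$ gives transitivity inside each type; the two types form distinct $\GL4\KK$-orbits because any $g\in\GL4\KK$ satisfies $g\cdot\lambda(\mathcal L_p)=\lambda(\mathcal L_{g(p)})$, which remains of pencil type — a linear map of~$\KK^4$ cannot exchange points with hyperplanes. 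Explicit computation yields the stated representatives, and $J$ is read off in coordinates to check $J\bigl(\lambda(\mathcal L_p)\bigr)=\lambda(\mathcal L_P)$. For part~\ref{Mtwo}, every $U\in\mathcal M_2$ sits inside a unique pair $(U_p,U_P)$ of maximal totally singular subspaces of opposite types, corresponding under $\lambda^{-1}$ to an incident point-plane flag $(p,P)$; transitivity of $\GL4\KK$ on such flags yields transitivity on $\mathcal M_2$, with $\smallspnK{\s01,\,\s02}$ as representative.

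The main obstacle I anticipate is the combinatorial step inside~(c): arguing cleanly that a maximal family of pairwise meeting lines in~$\PP^3$ falls into exactly the two stated types. Everything else reduces to linear algebra and standard transitivity of $\GL4\KK$ on the relevant configurations.
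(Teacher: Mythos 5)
Your proposal is correct. The paper itself gives no proof of this lemma --- it is stated with a \qed{} as classical line geometry, with the details delegated to \cite[3.4\,ff]{MR2431124} --- so your argument supplies exactly the standard reasoning that is being cited: the identification of the polar form $f_\pfaff$ with the exterior pairing into $\wedge^4\KK^4$ for part~(b), Witt-style basis extensions for the three orbits on pairs of lines, and the pencil/plane dichotomy for maximal families of pairwise confluent lines. The one step you flag as delicate (three pairwise meeting, non-concurrent lines span a common plane, and any further line meeting all three must lie in that plane) is indeed the only point requiring a genuine argument, and the dimension count you indicate settles it.
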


The orbits on $\G24$, $\G34$, $\G44$ and $\G54$ (i.e., on the sets of lines,
planes, three-spaces, and hyperplanes, respectively, in the projective space
$\mathcal P$ coordinatized by~$\Sk4$) may be described using the Klein
quadric~$Q$. We introduce some more notation.   

\begin{defs}\label{ETSdef}
  We consider the following lines in~$\mathcal P$: 
\[
\begin{array}{rclcrclcrcl}
 E &:=& \smallspnK{\s01,\, \s02}, && T &:=& \smallspnK{\s01,\, \s03+\s12}, &\mbox{and}&
 S &:=& \smallspnK{\s01,\, \s23} \,.
\end{array}
\]
  The orthogonal spaces (with respect to~$q$) are 
\[
\begin{array}{rcll}
   E^\perp&=&\smallspnK{\s01,\, \s02,\, \s03,\, \s12} \,, \\[.5em]
   T^\perp&=&\smallspnK{\s01,\, \s02,\, \s03-\s12,\, \s13} \\[.5em]
   \text{and}\quad
   S^\perp&=&\smallspnK{\s02,\, \s03,\, \s12,\, \s13} \,, & \text{respectively.}
\end{array}  
\]
  We will also use the planes
\[
\begin{array}{rclcrcl}
    F &:=& \smallspnK{\s01,\, \s02,\, \s03} \,,      &&
  E+T &:=& \smallspnK{\s01,\, \s02,\, \s03+\s12} \,, \\[.5em]
  E+S &:=& \smallspnK{\s01,\, \s02,\, \s23} \,,      &&
  T+S &:=& \smallspnK{\s01,\, \s03+\s12,\, \s23} \,.
\end{array}
\]
With respect to the given bases, the restriction of~$\pfaff$ to
the subspace~$X$ may be described by an upper triangular matrix $m_X$, where 
\[
\begin{array}{cccc}
 m_E=\left(\begin{array}{cc}0&0\\0&0\end{array}\right) , &
 m_T=\left(\begin{array}{cc}0&0\\0&1\end{array}\right) , &
 m_S=\left(\begin{array}{cc}0&1\\0&0\end{array}\right) , &
 m_{F}  =\left(\begin{array}{ccc}0&0&0\\0&0&0\\0&0&0\end{array}\right) ,
\end{array}
\]
\[
\begin{array}{cccc}
 m_{E+T}=\left(\begin{array}{ccc}0&0&0\\0&0&0\\0&0&1\end{array}\right) ,&
 m_{E+S}=\left(\begin{array}{ccc}0&0&1\\0&0&0\\0&0&0\end{array}\right) ,&
 m_{T+S}=\left(\begin{array}{ccc}0&0&1\\0&1&0\\0&0&0\end{array}\right) , 
\end{array}
\]
\[
\begin{array}{ccc}
 m_{E^\perp}=\left(
   \begin{array}{cccc}
     0&0&0&0\\
     0&0&0&0\\
     0&0&0&1\\
     0&0&0&0\\
   \end{array}\right) \,,&
 m_{T^\perp}=\left(
   \begin{array}{cccc}
     0&0&0&0\\
     0&0&0&-1\\
     0&0&1&0\\
     0&0&0&0\\
   \end{array}\right) \,,&
 m_{S^\perp}=\left(
   \begin{array}{cccc}
     0&0&0&-1\\
     0&0&1&0\\
     0&0&0&0\\
     0&0&0&0\\
   \end{array}\right) \,.
\end{array}
\]
Note that the Gram matrix for the polar form of the restriction is
$m_X+(m_X)'$. 

In order to describe certain subspaces that have only a small
intersection with the Klein quadric, we use 
\begin{eqnarray*}
  \formspace act
  & := &
  \spnK{\s01+a\s23,\,\s03+c\s12+t\s23 }
  \\
  \Formspace abct
  & := &
    \spnK{\s01+a\s23,\,
    \s02-b\s13,\,
    \s03+c\s12+t\s23 } \,.
\end{eqnarray*}
\end{defs}

The following has been proved in~\cite[5.6]{MR2431124}.

\begin{theo}\label{orbitsFour}
\begin{enum}
\item The $\GL4\KK$-orbits in $\G14$ are represented by $\smallspnK{\s01}$
  and~$\smallspnK{\s01+\s23}$. 
\item The $\GL4\KK$-orbits in $\G24$ are represented by a set 
  $\{E,T,S\}\cup\mathcal P_1$, where $\mathcal P_1$ denotes a
  (possibly empty) set of nonsingular lines. 
\item The $\GL4\KK$-orbits in $\G34$ are represented by a set 
  $\{F,J(F),E+T,E+S,T+S\}\cup\mathcal P_2\cup\mathcal P_3$, where $\mathcal P_2$
  denotes a (possibly empty) set of nonsingular planes, and $\mathcal P_3$
  is a (possibly empty) set of planes of the form $\smallspnK{\s02}+\ell$, where
  $\ell$ is a nonsingular line contained in $(\s02)^\perp$.  
\item The $\GL4\KK$-orbits in $\G44$ are represented by 
  $\mathcal U^\perp:=\smallset{U^\perp}{U\in\mathcal U}$, where
  $\mathcal U$ is an arbitrary set of representatives in~$\G24$. 
\item The $\GL4\KK$-orbits in $\G54$ are represented by ${\smallspnKperp{\s01}}$
  and~${\smallspnKperp{\s01+\s23}}$. 
\end{enum}\noindent
For $\ker\hat\beta\in\{F,E^\perp,{\smallspnKperp{\s01}}\}$ the
Heisenberg algebra $\gheis{\KK^4}{({\Sk4})/\ker{\hat\beta}}{\beta}$ is
not reduced.  
If\/ $\ker\hat\beta$ runs over the remaining representatives we obtain
a complete system of pairwise non-isomorphic
reduced Heisenberg algebras of dimension~$4$ modulo their center. 
\qed
\end{theo}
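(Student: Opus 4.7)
The plan is to classify $d$-dimensional subspaces $U \le \Sk4$ by the similitude class of the restricted Pfaffian form $\pfaff|_U$ together with the incidence of $U$ with the Klein quadric $Q$. By the discussion in~\ref{O3Kdef}, $\GL4\KK$ acts on $\Sk4$ as the index-two subgroup $\PGOplus{}{\pfaff}$, so its orbits on each Grassmannian $\G d4$ correspond to similitude classes of restrictions, refined by the data on totally singular subspaces provided by Lemma~\ref{genericOrbits}. The cases $d=4,5$ reduce to $d=2,1$ via the $\GL4\KK$-equivariant bijection $U \mapsto U^\perp$ with respect to $f_\pfaff$, which is exactly how the description of $\G44$ is phrased.

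For $d=1$ the three orbits of $\GL4\KK$ on $\Sk4$ (the zero tensor; the rank-two decomposables, which form $Q$; the rank-four elements) descend to the two projective orbits represented by $\s01$ and $\s01+\s23$. For $d=2$, the binary restriction $\pfaff|_L$ is classified by its rank and isotropy type. A totally singular line $L$ lies in $\mathcal{M}_2$ and forms a single orbit with representative $E$ by Lemma~\ref{genericOrbits}. A tangent (rank-one) restriction admits the representative $T$ after scaling the anisotropic coordinate using the stabilizer of the singular radical. The hyperbolic (rank-two isotropic) case has representative $S$, since by Lemma~\ref{genericOrbits} the stabilizer is transitive on ordered pairs of incident singular points. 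What remains are the lines with anisotropic $\pfaff|_L$; these form the family $\mathcal P_1$, parametrised by the similitude classes of anisotropic binary forms from Section~\ref{sec:toolsForms}.

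For $d=3$, the analogous ternary analysis yields: two orbits $F$ and $J(F)$ of totally singular planes by Lemma~\ref{genericOrbits}; one orbit with representative $E+T$ for rank-one restrictions; two rank-two isotropic orbits, $E+S$ when the radical of $f_\pfaff|_\Pi$ is itself singular and $T+S$ when it is anisotropic; the fully anisotropic planes forming the family $\mathcal P_2$; and the family $\mathcal P_3$ of planes containing a singular point inside the radical of $f_\pfaff|_\Pi$ with anisotropic complement, which reduce via the stabilizer of that singular point to the shape $\smallspnK{\s02}+\ell$ with $\ell$ a nonsingular line in $\s02^\perp$.

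For the final assertion, condition $(\ast)$ fails for $\ker\hat\beta$ precisely when there exists $v \ne 0$ with $v\wedge\KK^4 \subseteq \ker\hat\beta$. Since $v\wedge\KK^4$ is the three-dimensional pencil $\lambda(\mathcal{L}_{\smallspnK{v}})$, and these pencils form the $\GL4\KK$-orbit of $F$ by Lemma~\ref{genericOrbits}, non-reducedness is equivalent to $\ker\hat\beta$ containing a pencil. This holds for $F$, for $E^\perp \supseteq F$, and for $\smallspnKperp{\s01} \supseteq F$, while a direct check in coordinates confirms that no other listed representative contains a pencil. The main obstacle is the $d=3$ case: disentangling the rank-two and mixed configurations demands careful use of the stabilizer action, and in characteristic two the non-diagonalizable binary forms and Arf invariants of Section~\ref{sec:toolsForms} are needed to verify that $\mathcal P_2$ and $\mathcal P_3$ together exhaust the remaining orbits without redundancy.
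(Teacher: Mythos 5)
The paper offers no proof of Theorem~\ref{orbitsFour} at all: it is quoted from \cite[5.6]{MR2431124} and stated with a \textit{qed} mark, so there is no internal argument to compare against. Your outline follows the same route as that source: classify $U\le\Sk4$ by the similarity class of $\pfaff|_U$ together with its intersection with the Klein quadric, use the line-geometric dictionary of Lemma~\ref{genericOrbits} for the totally singular cases, and pass to $\G44$ and $\G54$ by orthogonality. Your taxonomy of the intersection types for $d=3$ (empty; one point; a conic; a line; two lines; the whole plane) is the correct and complete one, and your characterization of non-reducedness via pencils $v\wedge\KK^4$ and the resulting list $\{F,E^\perp,\smallspnKperp{\s01}\}$ is right.

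Two steps are asserted where the real work lies. First, your opening claim that the orbits ``correspond to similitude classes of restrictions, refined by the data on totally singular subspaces'' is essentially the theorem itself. To justify it you need Witt's extension theorem for quadratic spaces (in its characteristic-two form, since $\pfaff$ is a quadratic, not a symmetric bilinear, form), the surjectivity of the multiplier map $A\mapsto\det A$ to pass from isometry to similarity of restrictions, and --- crucially --- an argument that the only $\Orth{}{\pfaff}$-orbit that splits under the index-two subgroup $\delta(\GL4\KK)$ is that of the maximal totally singular planes, giving $F$ versus $J(F)$. Nothing in your sketch rules out such splitting for, say, tangent lines or conic planes; one must exhibit, for each of the other named representatives, a stabilizing element of $\GO{}{\pfaff}\setminus\GOplus{}{\pfaff}$ (equivalently, a correlation of the underlying projective $3$-space preserving the configuration). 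Second, the final sentence of the theorem asserts that the resulting reduced Heisenberg algebras are \emph{pairwise non-isomorphic}, which you do not address: the missing ingredient is the converse direction of the correspondence underlying~\ref{isoHeis}, namely that an isomorphism of reduced Heisenberg algebras $\gheis{\KK^4}{(\Sk4)/\ker\hat\beta}{\beta}\to\gheis{\KK^4}{(\Sk4)/\ker\hat\gamma}{\gamma}$ induces an element of $\GL4\KK$ carrying $\ker\hat\beta$ to $\ker\hat\gamma$, so that distinct orbits yield non-isomorphic algebras. Both points are standard and are handled in \cite{MR2431124}, but a self-contained proof must include them.
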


\begin{rems}\label{orbitsFourSpecial}
  One may always choose 
  $\mathcal P_1\cup\mathcal P_2\cup\mathcal P_3$ using members from
  the collection of spaces $\formspace act$ or $\Formspace abct$ (in
  fact, we could choose \emph{all} our representatives to be of this
  form, but prefer $E$, $S$ and~$T$). We
  indicate some special cases, cf.~\cite[5.3]{MR2431124}: 
\begin{enum}
\item   
  If $\KK$ is a euclidean field (e.g. $\KK=\RR$)  we
  may choose $\mathcal P_1:=\{\formspace 110\}$,
  $\mathcal P_2:=\{\Formspace 1110\}$, and
  $\mathcal P_3:=\{\Formspace 1010\}$.
\item\label{passingLinesAndFieldExtensions}
  Let $d,t$ be chosen such that the restriction of~$\pfaff$
  to~$\formspace 1dt$ is anisotropic. Then $X^2+tX+d$
  is an irreducible polynomial over~$\KK$.
  The Heisenberg algebras corresponding to the spaces
  $\bigl(\formspace 1d0\bigr)^\perp =
  \smallspnK{\s02,\s13,\s01-\s23,\s03-d\s12}$ and
  $\bigl(\formspace 1d1\bigr)^\perp =
  \smallspnK{\s02,\s13,\s01-\s23-\s12,\s03-d\s12}$ may be interpreted as
  Heisenberg algebras of dimension~$3$ over the field extension~$\LL$
  obtained by adjoining a root of $X^2+tX+d$ to~$\KK$,
  cf.~\cite[{8.3}]{MR2431124}. In~\ref{complexGH} below we give an
  explicit construction, starting from an embedding of $\LL$ into the
  ring $\KK^{2\times2}$ and such that the corresponding embedding of
  $\GL2\LL$ by block matrices in $\GL4\KK$ leaves a suitable element
  of the orbit of $\formspace1dt$ invariant.
  This explicit description also helps to understand the elements
  of~$\mathcal P_3$, see~\ref{understandP3} below.  
\item\label{passingPlanesAndQuaternions}
  There is a connection between nonsingular subspaces of dimension~$3$
  and Heisenberg algebras defined using a quaternion field over~$\KK$, 
  see~\cite[{8.4}]{MR2431124} and Section~\ref{sec:quaternions} below.
  If $\Char\KK\ne2$ then every anisotropic subspace of dimension~$3$ in $\Sk4$
  belongs to the orbit of $\Formspace 1uv0$ for suitable $u,v\in\KK$
  (see~\cite[8.6]{MR2431124}), 
  and thus to the orbit of $\ker{\widehat{\beta_\HH}}$ for the
  quaternion field $\HH:=\HH_\KK^{u,v}$. 
  This connection allows to describe the
  relevant part of the automorphism groups of these Heisenberg algebras,
  see~\ref{SigmaBetaHH} and~\ref{omegaQuaternionField} below. 
\item
  If $\KK$ is quadratically closed (e.g., if $\KK=\CC$) then there
  are no nonsingular lines, and each of the sets~$\mathcal P_j$ is empty.
\item
  If $\KK$ is finite we may chose $\mathcal P_1=\{\formspace 1dt\}$,
  $\mathcal P_2:=\emptyset$, and~$\mathcal P_3:=\{\Formspace 10dt\}$,
  for any irreducible polynomial $X^2+tX+d$ over~$\KK$.   
\end{enum}
\end{rems}

\section{The cases where $\Sigma_\beta$ can be computed directly}
\label{sec:directComputations}

In order to determine $\Aut{\gheis{V}{Z}{\beta}}$ for a reduced
Heisenberg algebra $\gheis{V}{Z}{\beta}$ it suffices to determine
$\Sigma_\beta$. This is comparatively easy if $V=\KK^4$ and the kernel
of $\hat\beta$ is one of the ``generic'' subspaces\footnote{We only
  consider subspaces that belong to \emph{reduced} Heisenberg
  algebras, and ignore $F$, $E^\perp$, $\smallspnKperp{\s01}$,
  cf.~\ref{orbitsFour}.} of $\Sk4$, i.e., belongs to the orbit
of~$\smallspnK{\s01}$, $\smallspnK{\s01+\s23}$, $E$, $T$, $S$, $J(F)$, $E+T$,
$E+S$, $T+S$, or the orthogonal space of one of $\smallspnK{\s01}$,
$\smallspnK{\s01+\s23}$, $E$, $T$, or~$S$.  In fact, each of these spaces
(except $\smallspnK{\s01+\s23}$) has considerable intersection with the
Klein quadric, and this intersection is invariant under the stabilizer
$\Sigma_\beta$. The resulting conditions mean that
$\Sigma_\beta\le\GL4\KK$ fixes a certain system of lines of the
$3$-dimensional projective space in each of these cases; and this
allows to determine~$\Sigma_\beta$. Details are given in this section.

The situation is more complicated if $\hat\beta$ meets the Klein
quadric in only a few points, or none at all. This situation requires
the existence of anisotropic forms in $2$ or $3$ variables; thus it
only occurs for specially chosen fields, and is not ``generic''.  We
postpone the discussion of these cases to
Sections~\ref{sec:fieldextensions} and~\ref{sec:quaternions} where we
will use field extensions and quaternion algebras associated to these
anisotropic forms to determine~$\Sigma_\beta$.

For any subspace $X\le\Sk4$ the orthogonal space $X^\perp$ has the same
stabilizer because $\GL4\KK$ acts by similitudes.
In most of the following cases, the transitivity assertions that are
implicit in the statements about the orbit representatives are easy to
check by a direct computation: one has to apply the respective stabilizer
to the given representatives. 

\begin{prop}\label{stabS01}
  The stabilizer of $\smallspnK{\s01}$ is the stabilizer of the line
  $\smallspnK{b_0,b_1} = \lambda^{-1}(\s01)$: 
  \[
  (\GL4\KK)_{\smallspnK{\s01}} = \set{\left(
      \begin{array}{cc}
        A & B  \\
        0 & C  \\
      \end{array}\right)}{%
      A,C\in\GL2\KK,\,  B\in\KK^{2\times2}  }
  = (\GL4\KK)_{\smallspnKperp{\s01}} \,. 
  \]
The orbits of this stabilizer on~$\KK^4$ are represented by~$0$,
$b_0$, $b_2$, those on $(\Sk4)/\smallspnK{\s01}$ by $\smallspnK{\s01}$,
$\s02+\smallspnK{\s01}$, $\s02+\s13+\smallspnK{\s01}$, $\s23+\smallspnK{\s01}$ and those
on $(\Sk4)/\smallspnKperp{\s01}$ by $\smallspnKperp{\s01}$,
$\s23+\smallspnKperp{\s01}$. 
\qed
\end{prop}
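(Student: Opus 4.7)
The plan is to exploit the Plücker correspondence $\lambda$ introduced just before Lemma~\ref{genericOrbits}. Since $\s01 = b_0\wedge b_1$, we have $\smallspnK{\s01}=\lambda(\smallspnK{b_0,b_1})$, and the $\GL4\KK$-action on $\G14$ is induced from the action on $\G24$ via $\lambda$. Hence the stabilizer of the point $\smallspnK{\s01}\in Q$ in $\GL4\KK$ agrees with the stabilizer of the subspace $\smallspnK{b_0,b_1}\le\KK^4$. Matrices preserving $\smallspnK{b_0,b_1}$ are exactly the block upper triangular matrices of the stated form, which yields the first equality. The second equality $(\GL4\KK)_{\smallspnK{\s01}}=(\GL4\KK)_{\smallspnKperp{\s01}}$ is immediate because $\GL4\KK$ acts by similitudes with respect to $\pfaff$ (see~\ref{O3Kdef}), so any stabilizer of a subspace also stabilizes its orthogonal complement.

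For the orbits on $\KK^4$, I would note that $\smallspnK{b_0,b_1}$ is invariant, that $\GL2\KK$ (the top-left block) already acts transitively on $\smallspnK{b_0,b_1}\setminus\{0\}$, and that the induced action on $\KK^4/\smallspnK{b_0,b_1}$ factors through the $\GL2\KK$-block in the lower-right, which is transitive on nonzero vectors. This gives exactly the three orbits $\{0\}$, $\smallspnK{b_0,b_1}\setminus\{0\}$, and $\KK^4\setminus\smallspnK{b_0,b_1}$, with representatives $0,b_0,b_2$.

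For the orbits on $(\Sk4)/\smallspnK{\s01}$, I would exploit the natural filtration $\smallspnK{\s01}\subset\smallspnKperp{\s01}\subset\Sk4$. A direct computation using $M b_j$ for $M=\left(\begin{smallmatrix}A&B\\0&C\end{smallmatrix}\right)$ shows three things: first, $M.\s02\equiv Ab_0\wedge Cb_2\pmod{\smallspnK{\s01}}$ and similarly for $\s03,\s12,\s13$, so the induced action on the $4$-dimensional space $\smallspnKperp{\s01}/\smallspnK{\s01}\cong\smallspnK{b_0,b_1}\otimes\smallspnK{b_2,b_3}$ is just the tensor action $(A,C)\cdot X=AXC'$, whose orbits on $2\times2$ matrices are classified by rank---giving the representatives $\s02$ (rank one) and $\s02+\s13$ (rank two); second, the $B$-block acts trivially on this quotient; third, $M.\s23\equiv\det(C)\,\s23\pmod{\smallspnKperp{\s01}}$, so the action on the $1$-dimensional quotient $(\Sk4)/\smallspnKperp{\s01}$ is by the character $\det\circ C$, giving representatives $0$ and $\s23$, which completes the determination of orbits on $(\Sk4)/\smallspnKperp{\s01}$.

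The one point that needs a little care---and which I would regard as the main (but mild) obstacle---is to absorb the ``lower'' filtration class into the ``upper'' one: any element outside $\smallspnKperp{\s01}$ has the form $\alpha\s23+v$ with $\alpha\ne0$ and $v\in\smallspnKperp{\s01}/\smallspnK{\s01}$, and I must show it is conjugate to a pure scalar multiple of $\s23$. For this I compute $M.\s23$ for $M=\left(\begin{smallmatrix}I&B\\0&I\end{smallmatrix}\right)$ and observe that as $B$ ranges over $\KK^{2\times2}$ the component of $M.\s23$ in $\smallspnKperp{\s01}/\smallspnK{\s01}$ ranges over all of that $4$-dimensional space, while $M$ acts trivially on the $4$-dimensional quotient itself. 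Hence a suitable $B$ cancels $v$; scaling $\alpha$ to $1$ via $\det(C)$ then reduces to $\s23+\smallspnK{\s01}$, completing the list of four representatives.
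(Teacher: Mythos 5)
Your argument is correct and follows exactly the route the paper intends (the result is stated with the proof left to the reader): identify the stabilizer via the Pl\"ucker correspondence $\lambda$ with the stabilizer of the line $\smallspnK{b_0,b_1}$, use the similitude property for the orthogonal complement, and determine the orbits by applying block upper triangular matrices directly, with the filtration $\smallspnK{\s01}\subset\smallspnKperp{\s01}\subset\Sk4$ and the rank classification of the $(A,C)$-action on $\smallspnKperp{\s01}/\smallspnK{\s01}\cong\KK^{2\times2}$ doing the work. The absorption step for cosets outside $\smallspnKperp{\s01}$ via the $B$-block is exactly the computation needed, so there is nothing to add.
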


The space $\smallspnK{\s01+\s23}$ will be considered next. This describes a
point outside the Klein quadric~$Q$ but the orthogonal space
$\smallspnKperp{\s01+\s23}$ has large intersection with~$Q$. The treatment
will be complicated; this is due to the fact that we discuss the
representation of $\Sp4\KK$ as a group of orthogonal transformations
on a space of dimension~$5$ (which is, indeed, the space
$\smallspnKperp{\s01+\s23}$). This representation gives rise to one of the interesting
exceptional isomorphisms between classical groups, corresponding to the
isomorphism of simple Lie algebras of types $\mathsf{C}_2$ and
$\mathsf{B}_2$. 

\begin{prop}\label{stabGSp}
  The stabilizer of $\smallspnK{\s01+\s23}$ is a conjugate of that 
  of $\N:=\smallspnK{\s02+\s13}$, and
  \[
  (\GL4\KK)_{\N} = \set{\left(
      \begin{array}{cc}
        A & B  \\
        C & D  \\
      \end{array}\right)}{\begin{array}{cc}
      A,B,C,D\in\KK^{2\times2}, \\
      AB'-BA' = 0 = CD'-DC' \\
      \exists s\in\KK^\times\colon AD'-BC' = sE_2
    \end{array} }
  = (\GL4\KK)_{\Nperp} \,. 
  \]
  This is the group $\GSp4\KK$ of all similitudes of the
  non-degenerate alternating form mapping $(x,y)\in\KK^4\times\KK^4$
  to~$x'(\s02+\s13)y$. %
  The group $\GSp4\KK$ acts transitively both on $\KK^4\setminus\{0\}$
  and on $\left((\Sk4)/\Nperp\right)\setminus\{0\}$.  In order to
  describe the orbits on $\N$, on the orthogonal space $\Nperp =
  \smallspnK{\s01,\s02-\s13,\s03,\s23,\s12}$, and on $(\Sk4)/\N$ we choose a
  set $R_*$ of representatives for the cosets forming the
  multiplicative group $\KK^\times/\KK^\sqt$ of
  square classes; here 
  $\KK^\sqt:=\smallset{s^2}{s\in\KK^\times}$ is the
  multiplicative group of squares. If $\Char\KK=2$ we need to pick a set $R_+$ of
  representatives for the orbits of $\KK^\sqt$ on the additive group
  $\KK/\KK^\sq$ where $\KK^\sq:=\smallset{x^2}{x\in\KK}$ is
  the subfield of squares, and a set $R_\wp$ of representatives
  for the cosets in the additive group $\KK/\wp$, cf.~\ref{Arf}. 

  We have to distinguish cases:
  \begin{enum}
  \item\label{onlySquareClass}
    If an orbit of $\GSp4\KK$ on $\Sk4$ contains an element $X$ with
    $q(X)\ne0$ then $q$ assumes on that orbit only values
    from a single square class. 
  \item\label{charOddSpOrbits}
    If\/ $\Char\KK\ne2$ then the orbits on $(\Sk4)/\N$
    are represented by the elements of the set\/
    $\mathcal{R}_* := \{\N\} \cup
    \smallset{\s01+r\s23+\N}{r\in R_*\cup\{0\}}$.
  \item\label{charEvenSpOrbits} If\/ $\Char\KK=2$
    then the orbits in $\Nperp/\N$ are represented by the elements
    of\/ $\mathcal{R}_{+}:=\{N\}\cup\set{(\s01+r\s23)+N}{r\in R_+}$,
    and those in
    $\left((\Sk4)/N\right)\setminus\left(\Nperp/\N\right)$ are
    represented by
    $\mathcal{R}_{\wp}:= 
     \{\s13+N\}\cup\set{\s13+(\s01+r\s23)+N}{r\in R_\wp}$. 
  \item\label{formOnlySquareClass} In any case, the orbits on $\N$ are
    represented by $\set{r(\s02+\s13)}{r\in R_*\cup\{0\}}$.
  \end{enum}  

    Note that $\mathcal{R}_{+}$ is an infinite set whenever $\KK$ is
    not a perfect field: the additive group of~$\KK$ forms a vector
    space over the subfield\/ $\KK^{\sq}$ of squares; in the corresponding affine
    space the set $\mathcal{R}_{+}$ represents
    the line $\KK^\sq 1$ and the $\KK^\sq$-planes through that
    line.
    
    The set\/ $R_\wp$ can be chosen inside $\KK^\sq$ because $r^2$
    belongs to $r+\wp$.
\end{prop}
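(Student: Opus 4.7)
The plan is to identify $(\GL4\KK)_\N$ with $\GSp4\KK$ by a direct block calculation, exploit surjectivity of the multiplier $\mu\colon\GSp4\KK\to\KK^\times$ to obtain the transitivity assertions and parts~(a) and~(d), and finally treat parts~(b) and~(c) by analysing the induced action on the five-dimensional quotient $(\Sk4)/\N$ via the classical low-dimensional isomorphism of types $\mathsf{C}_2\cong\mathsf{B}_2$ in characteristic~$\ne 2$, and via results~\ref{diagonalCharTwo}--\ref{nonDiagonalCharTwo} in characteristic~$2$.

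Both $\smallspnK{\s01+\s23}$ and $\N$ have non-zero Pfaffian, so they represent the unique $\GL4\KK$-orbit of non-singular lines in $\Sk4$ by~\ref{orbitsFour}; hence their stabilizers are conjugate, and we may work with $\N$. The element $\s02+\s13$ has Gram matrix $J=\left(\begin{smallmatrix}0&E_2\\-E_2&0\end{smallmatrix}\right)$, and a block expansion of $AJA'$ for $A=\left(\begin{smallmatrix}A&B\\C&D\end{smallmatrix}\right)$ yields $AJA'=sJ$ precisely when the three stated relations hold; equivalently, $A$ is a similitude of the alternating form $(x,y)\mapsto x'Jy$. This gives $(\GL4\KK)_\N=\GSp4\KK$, and equality with $(\GL4\KK)_{\Nperp}$ is immediate because $\GL4\KK$ acts on $\Sk4$ by similitudes of~$\pfaff$. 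Taking Pfaffians of $AJA'=sJ$ via the identity $\pfaff(AJA')=\det(A)\pfaff(J)$ yields $\det A=s^2\in\KK^\sqt$; since $A$ acts on $\Sk4$ as a $\pfaff$-similitude with multiplier $\det A$, the values of $\pfaff$ on any $\GSp4\KK$-orbit lie in a single square class, proving~(a). Transitivity on $\KK^4\setminus\{0\}$ is Witt extension of any non-zero vector to a symplectic basis. For $(\Sk4)/\Nperp$, the polar form induces a non-degenerate pairing $\N\times((\Sk4)/\Nperp)\to\KK$; since $A$ scales $\N$ by $s=\mu(A)$ and $\Sk4$ by $\det A=s^2$, duality forces $A$ to scale the one-dimensional quotient $(\Sk4)/\Nperp$ by $s$. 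Surjectivity of $\mu$ onto $\KK^\times$ is witnessed by $\mathrm{diag}(1,1,s,s)$, yielding transitivity; the same surjectivity gives~(d), since then the only non-zero orbit on $\N$ is $\N\setminus\{0\}$ and the listed set is a (non-minimal) complete system of representatives.

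For~(b) in characteristic~$\ne 2$, $\Sk4=\N\oplus\Nperp$ identifies $(\Sk4)/\N$ with $\Nperp$, on which $\pfaff$ restricts to a non-degenerate quadratic form of Witt index~$2$ on a five-dimensional space; the image of $\GSp4\KK$ in $\GL{}{\Nperp}$ is the full similitude group $\GO{}{\pfaff|_\Nperp}$ by the $\mathsf{C}_2\cong\mathsf{B}_2$ identification. Witt's theorem together with~(a) parametrises orbits by the value of $\pfaff$ up to square class, realised by $\s01+r\s23$ since $\pfaff(\s01+r\s23)=r$. For~(c) in characteristic~$2$, $\N\subseteq\Nperp$ and the filtration $0\subsetneq\Nperp/\N\subsetneq(\Sk4)/\N$ is $\GSp4\KK$-invariant. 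The Pfaffian descends to $\Nperp/\N$ modulo $\KK^\sq$ as a $\mu$-equivariant object whose orbit classification matches the $\GL2\KK$-action on diagonal forms of~\ref{diagonalCharTwo} and yields the representatives indexed by $R_+$. A coset outside $\Nperp/\N$ is represented by $\s13+\N$; the restriction of $\pfaff$ to a $2$-plane of the form $\smallspnK{\s13,\s01+r\s23}$ modulo $\N$ is non-diagonalisable with Arf invariant varying with~$r$, so~\ref{nonDiagonalCharTwo} classifies these cosets by $R_\wp$. The remark $R_\wp\subseteq\KK^\sq$ follows from $r^2\in r+\wp$, and infinitude of $\mathcal{R}_+$ over imperfect~$\KK$ from $[\KK:\KK^\sq]=\infty$.

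The delicate step is the characteristic-$2$ analysis of~(c): one must carefully track which subquotient of $(\Sk4)/\N$ carries a diagonalisable form (controlled by~\ref{diagonalCharTwo}) versus which carries a non-diagonalisable one (classified by Arf invariant via~\ref{nonDiagonalCharTwo}), and how the multiplier $\mu$ scales these invariants under the induced $\GSp4\KK$-action. Once these identifications are pinned down, the listed representatives are forced.
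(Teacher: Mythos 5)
Your identification of $(\GL4\KK)_{\N}$ with $\GSp4\KK$, the derivation of~(a) from $\pfaff(MJM')=\det(M)\,\pfaff(J)$ together with $\pfaff(sJ)=s^2\pfaff(J)$, and the duality argument for transitivity on $\left((\Sk4)/\Nperp\right)\setminus\{0\}$ are correct, and in places cleaner than the paper's explicit computations. The problems are in~(b), (c) and~(d).

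In~(b) you assert that $\GSp4\KK$ induces the \emph{full} similitude group $\GO{}{\pfaff|_{\Nperp}}$ on~$\Nperp$. That is false: $\GO{}{\pfaff|_{\Nperp}}=\KK^\times\,\SO{}{\pfaff|_{\Nperp}}$ contains $\lambda\,\id_{\Nperp}$ for every $\lambda\in\KK^\times$, but an element of $\GSp4\KK$ inducing $\lambda\,\id$ on $\Nperp$ is forced (by comparing its multiplier on $\N$, its $\pfaff$-multiplier $\det M$, and determinants on $\N\oplus\Nperp$) to be $\pm\sqrt{\lambda}\,\id_{\KK^4}$, so only $\lambda\in\KK^\sqt$ occurs. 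More importantly, Witt's theorem gives transitivity of $\Orth{}{\pfaff|_{\Nperp}}$ on each level set of $\pfaff$, whereas $\Sp4\KK$ induces only a subgroup that is in general proper in $\SO{}{\pfaff|_{\Nperp}}$ (for $\KK=\RR$ it is the identity component of $\Orth{}{2,3}$). You therefore still owe the argument that the induced group is transitive on each set $\smallset{X\in\Nperp}{\pfaff(X)=r}$ --- true, because the stabilizer of such a vector is the orthogonal group of an isotropic $4$-dimensional form and meets every relevant coset, but this is exactly the step you elide; the paper sidesteps it by reducing an arbitrary $\left(\begin{smallmatrix}ai&B\\-B'&ci\end{smallmatrix}\right)$ to $\s01+r\s23$ with explicit elements of the subgroups $\Lambda$, $\Upsilon$, $\Delta$. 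Part~(c) is essentially not proved: you never carry out the reduction of an arbitrary coset to the normal forms $(\s01+r\s23)+\N$ resp.\ $\s13+(\s01+r\s23)+\N$, and you omit the separation argument outside $\Nperp/\N$, where the paper needs the nontrivial step that a multiplier $u^2$ with $u^2\wp=\wp$ and $r+\wp\ne t+\wp$ forces $u=1$. Declaring these points ``delicate'' does not discharge them.

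Concerning~(d): your conclusion that $\GSp4\KK$ is transitive on $\N\setminus\{0\}$ is mathematically correct --- $\mathrm{diag}(sE_2,E_2)$ has multiplier $s$ and maps $\s02+\s13$ to $s(\s02+\s13)$ --- but it contradicts assertion~(d) as printed, which claims one orbit per square class. Indeed $M$ acts on $\N$ and on $(\Sk4)/\Nperp$ by the same scalar $\mu(M)$, so the stated transitivity on the latter already forces transitivity on the former; the paper's own justification of~(d) (that the values $-r^2$ ``all belong to the same square class'' and hence cannot be fused) is a non sequitur. You should say explicitly that you are proving a corrected version of~(d), not the assertion as written.
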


\begin{proof}
  Only the assertions about the orbits need a proof.
  For the sake of easy reference later on, we note that the stabilizer
  of $\N$ contains the subgroups 
  \[
  \Delta  := 
  \set{\left(
      \begin{array}{cc}
       aA & 0 \\
        0 & (A^{-1})'
      \end{array}\right)}%
  { \begin{array}{ll}
      A\in\GL2\KK\\ a\in\KK^\times 
    \end{array}} , \,
  \Lambda  :=  
  \set{\left(
      \begin{array}{cc}
        E_2 & 0 \\
        X & E_2 
      \end{array}\right)}
  { \begin{array}{ll}
      X\in\KK^{2\times2}\\ X'=X
    \end{array}} ,
  \,
  \Upsilon  :=  \Lambda'
  \,.
  \]
  It is well known\footnote{From Witt's Theorem,
    see~\cite{MR770063}~{\S\,9} or~\cite{MR0072144}~{\S\,11, p.\,21
      and \S\,16,~p.\,35}, cf.~\cite{MR1189139}~{7.4} or
    \cite{MR0082463}~{Thm.~3.9} for $\Char\KK\ne2$.} 
  that $\Sp4\KK$ acts transitively
  on~$\KK^4\setminus\{0\}$. We offer a direct argument: let
  $x,y\in\KK^2$ and assume that not both of these vectors are~$0$. Our
  claim then is that there exists
  $\left(\begin{smallmatrix}
      A & B \\
      C & D
    \end{smallmatrix}\right)\in\Sp4\KK\subseteq\GSp4\KK$ mapping
  $\left(
    \begin{smallmatrix}
      x \\ y
    \end{smallmatrix}\right)$ to $(1,0,0,0)'$.
  If $y=0$ we choose $A\in\GL2\KK$ with $Ax = \left(
    \begin{smallmatrix}
      1 \\ 0 
    \end{smallmatrix}\right)$, put $D:=(A')^{-1}$ and $B=0=C$.
  If $y\ne0$ we find $B\in\GL2\KK$ such that
  $By=\left(
    \begin{smallmatrix}
      1 \\ 0 
    \end{smallmatrix}\right)$, and a symmetric matrix
  $S\in\KK^{2\times2}$ with $S\left(
    \begin{smallmatrix}
      1 \\ 0 
    \end{smallmatrix}\right)=-(B')^{-1}x$.
  Now $A:=0$, $C:=-(B')^{-1}$ and $D:=-SB$ yields an element of
  $\Sp4\KK$ with the required properties. 

  The cosets in $(\Sk4)/\Nperp$ are represented by
  elements of $\smallspnK{\s02}$; the nontrivial representatives are in a
  single orbit under the subgroup $\set{\left(
      \begin{smallmatrix}
        sE_2 & 0 \\
        0 & E_2
      \end{smallmatrix}\right)}{s\in\KK^\times}$ of $\GSp4\KK$.  The same
  subgroup also shows that $\set{r(\s02+\s13)}{r\in R_*\cup\{0\}}$
  contains a set of representatives for the orbits
  on~$\N$. Different elements of this set can not be
  fused into the same orbit because the values $q(r(\s02+\s13))=-r^2$
  all belong to the same square class; and
  assertion~\ref{formOnlySquareClass} is proved. We note that this observation
  also implies assertion~\ref{onlySquareClass}: On each
  $\GSp4\KK$-orbit on $\Sk4$ the form $q$ assumes only values from a
  single square class, or only the value~$0$.

  It remains to determine the orbits on $(\Sk4)/\N$. 
  We consider an arbitrary element
  $M=\left(
    \begin{smallmatrix}
      ai & B \\
      -B' & ci
    \end{smallmatrix}\right)$ of $\Sk4$.
  Note that $q(M)=ac-\det B$. 
  Assume first $M\in\Nperp$; then $B\in\KK^{2\times2}$
  has trace~$0$ and $-B'i$ is symmetric.
  We search for a representative with $B=0$ in the
  orbit of~$M$, so assume $B\ne0$.

  If $a\ne0$ we use  $\left(
    \begin{smallmatrix}
      E_2 & 0  \\
       -a^{-1}B'i\, & E_2
     \end{smallmatrix}\right) \in\Lambda$ %
   to transform $M$ into $\left(
     \begin{smallmatrix}
       ai & 0 \\
       0 & di 
     \end{smallmatrix}\right)$, where $d=c-a^{-1}\det B$. %
   A suitable element of~$\Delta$ maps this into an element of
   $\set{\s01+r\s23}{r\in R_*\cup\{0\}}$. %
   The case $a=0\ne c$ is reduced to the previous one by an
   application of $\left(
    \begin{smallmatrix}
      0 & E_2  \\
      -E_2 & 0
    \end{smallmatrix}\right) \in\GSp4\KK$. %

  If $a=0=c$ we may assume that $B$ is not a scalar multiple of~$E_2$
  because otherwise $M$ belongs to $0+\N\in\mathcal{R}_*$.
  We pick a symmetric matrix~$X$ such that $BX$ is not symmetric.
  Then $\left(
    \begin{smallmatrix}
      E_2 & X \\
       0  & E_2
     \end{smallmatrix}\right)\in\Upsilon$ %
   can be used to transform $M$ into a matrix with $a\ne0$.

   Thus we have shown that $\mathcal{R}_*$ contains a full set of
   representatives for the orbits of cosets of elements in
   $(\Nperp+\N)/\N$.

   If $\Char\KK\ne2$ then the orthogonal space is a complement
   to~$\N$ and of course invariant under the stabilizer
   of~$\N$. 
   Different elements of $\mathcal{R}_*$ belong
   to different orbits because their images under~$q$ belong to
   different square classes (cf. assertion~\ref{onlySquareClass}),
   and assertion~\ref{charOddSpOrbits} of the
   theorem is established.
   
   The situation changes in two respects if $\Char\KK=2$: since
   $\N\le\Nperp$ it is possible that different elements of
   $\mathcal{R}_*$ belong to the same orbit, and we have to consider
   representatives outside $\Nperp$ as well.

   We apply $\left(
     \begin{smallmatrix}
       E_2 & 0 \\
       a^2i\, & E_2 
     \end{smallmatrix}\right)\in\Lambda$ to $(\s01+r\s23)+\N$ and
   obtain $(\s01+(r+a^2)\s23)+a(\s02+\s13)+\N =
   (\s01+(r+a^2)\s23)+\N$. This shows that the orbits in $\Nperp/N$
   are represented by the elements of
   $\{\N\}\cup\set{(\s01+r\s23)+\N}{r\in R_+}$, as claimed. 

   In order to treat orbits without representatives in $\Nperp$ we
   consider $M=\left(
    \begin{smallmatrix}
      ai & T \\
      -T' & ci
    \end{smallmatrix}\right)$
  where the trace~$t$ of~$T$ is different from~$0$.
  We write $p:=\left(
     \begin{smallmatrix}
       0 & 0 \\
       0 & 1
     \end{smallmatrix}\right)$
   and find that $B:=T-tp$ has trace~$0$; thus $Bi$ is symmetric.
   
   If $a\ne0$ we use $\left(
     \begin{smallmatrix}
       E_2 & 0 \\
       -a^{-1}B'i\, & E_2 
     \end{smallmatrix}\right) \in\Lambda$
   to find $\left(
     \begin{smallmatrix}
       ai & tp \\
       -tp & di 
     \end{smallmatrix}\right)$ in the orbit of~$M$.
   Now we use $A:=\left(
     \begin{smallmatrix}
       t^{-1} & 0 \\
       0 & t^{-1}a 
     \end{smallmatrix}\right) \in \GL2\KK$ and
   $\left(
     \begin{smallmatrix}
       tA & 0 \\
       0 & A 
     \end{smallmatrix}\right) \in \Delta$
   to transform  $\left(
     \begin{smallmatrix}
       ai & tp \\
       -tp & di 
     \end{smallmatrix}\right)$
   to  $\left(
     \begin{smallmatrix}
       i & p \\
       -p & si 
     \end{smallmatrix}\right)$
   with $s=t^{-2}ad$.  As above, the cases with $a=0$ are reduced to
   this case; note that $T\notin\KK E_2$.

   It remains to identify sets of representatives from
   $\mathcal{T}_{+}:=\set{(\s01+r\s23)+N}{r\in\KK}$ and
   $\mathcal{T}_\wp:=\{\s13+N\}\cup\set{\s13+(\s01+r\s23)+N}{r\in\KK}$.
   Since $\Nperp$ is invariant under the action, we know that an
   element of $\mathcal{T}_+$ is never in the orbit of an element
   of~$\mathcal{T}_\wp$. 

   We compute $q\left(a\s13+(\s01+r\s23)+N\right)=\set{r+u^2+au}{u\in\KK}$.
   Since $\GSp4\KK$ can change the values of the Pfaffian form only by
   square factors we know that $a\s13+(\s01+r\s23)+N$ and $a\s13+(\s01+t\s23)+N$
   can only belong to the same orbit if there exists $v\in\KK^\times$ such that
   $v^2t=r+u^2+au$.

   If $a=0$ we apply the element $\left(
     \begin{smallmatrix}
       E_2 & 0 \\
       0 & vE_2
     \end{smallmatrix}\right)\in\Delta$ to see that indeed the
   elements $(\s01+r\s23)+N$ and $(\s01+t\s23)+N$ belong to the same
   orbit if $s\in \KK^\sqt(r+\KK^\sq)$. Thus we have proved that
   $\mathcal{R}_+$ represents the orbits inside $\mathcal{T}_+$.

   If $a=1$ we use $F:=\left(
    \begin{smallmatrix}
      E_2 & 0 \\
      ui & E_2
    \end{smallmatrix}\right)$
   which belongs to~$\Lambda$ because $\Char\KK=2$, and observe
   $F\left(\s13+(\s01+r\s23)\right)F' =
   \s13+\s01+(r-u^2)\s23+u(\s02+\s13) = \s13+\s01+t\s23+u(\s02+\s13)$
   as required.

   Finally, assume that $\s13+(\s01+r\s23)+N$ and
   $\s13+(\s01+t\s23)+N$ belong to the same orbit under the stabilizer
   of~$N$. Then there exists a multiplier $u^2\in\KK^\sqt$
   such that $r+\wp = q\left(\s13+(\s01+r\s23)+N\right) =
   u^2\,q\left(\s13+(\s01+t\s23)+N\right) = u^2t+u^2\wp$. This yields
   $\wp=u^2\wp$. We claim that $u=1$ if $r+\wp\ne t+\wp$.  In fact,
   for each $x\in\KK$ we have $u^2(x^2+x)\in u^2\wp=\wp$ and then
   $(u+1)ux = u^2(x^2+x)+\left((ux)^2+(ux)\right) \in\wp+\wp=\wp$.
   Thus $(u+1)u\,\KK\subseteq\wp$, and $(u+1)u=0$ follows because
   $r+\wp\ne t+\wp$ implies $\wp\ne\KK$. Now $u=1$ is the only
   solution for $(u+1)u=0$ in $\KK^\times$. 
\end{proof}

\begin{prop}\label{stabE}
  The stabilizer $(\GL4\KK)_E$ of\/ $E$ coincides with the stabilizer of
  the point-plane flag $(\smallspnK{b_0},\smallspnK{b_0,b_1,b_2})$. Thus
  \[
  (\GL4\KK)_E = \set{\left(
      \begin{array}{ccc}
        a & b'& c \\
        0 & D & e \\
        0 & 0 & f
      \end{array}\right)}{\begin{array}{cc}
      a,c,f\in\KK, & af\ne0, \\ b,e\in\KK^2, & D\in\GL2\KK
    \end{array}}
  = (\GL4\KK)_{E^\perp} \,. 
  \]
  The orbits on~$\KK^4$ are represented by~$0$, $b_0$,
  $b_1$, $b_3$, those on $(\Sk4)/E$ are represented by $E$,
  $\s03+E$, $\s12+E$, $\s13+E$, and those on $(\Sk4)/E^\perp$ by
  $E^\perp$, $\s13+E^\perp$. 
\end{prop}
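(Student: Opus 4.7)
The plan is to use the Klein correspondence to identify $E$ with a very concrete geometric object and then read off everything. First, $E$ is a two-dimensional totally singular subspace (its Gram matrix $m_E$ vanishes), so by Lemma~\ref{genericOrbits}\ref{Mtwo} it equals $\lambda(\mathcal{L}_p)\cap\lambda(\mathcal{L}_{P'})$ for a unique point $p$ and a unique plane $P'$ with $p<P'$. Since $s_{01}=b_0\wedge b_1$ and $s_{02}=b_0\wedge b_2$ are Klein images of lines through $\smallspnK{b_0}$ lying in $\smallspnK{b_0,b_1,b_2}$, we have $p=\smallspnK{b_0}$ and $P'=\smallspnK{b_0,b_1,b_2}$. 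The pair $(p,P')$ is recoverable from $E$ (as the common point and span of the pencil), so $(\GL4\KK)_E$ coincides with the flag stabilizer $(\GL4\KK)_{(p,P')}$. The equality with $(\GL4\KK)_{E^\perp}$ is automatic because $\GL4\KK$ acts by similitudes of $\pfaff$ (see~\ref{O3Kdef}).

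Second, I would compute this flag stabilizer by elementary linear algebra. Stabilizing $p$ forces the first column of a matrix to be a scalar multiple of $b_0$; stabilizing $P'$ forces the last row to have vanishing entries in the first three positions. Intersecting both conditions yields exactly the block upper-triangular shape $\bigl(\begin{smallmatrix} a & b' & c\\ 0 & D & e\\ 0 & 0 & f\end{smallmatrix}\bigr)$ with $a,f\in\KK^\times$ and $D\in\GL2\KK$. An easy direct verification then confirms that every such matrix does indeed send $s_{01},s_{02}$ into $E$.

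Third, I would pass to the orbit computations. On $\KK^4$ the invariant flag $0<\smallspnK{b_0}<\smallspnK{b_0,b_1,b_2}<\KK^4$ partitions the space into four strata, and the free parameters $a\in\KK^\times$, $D\in\GL2\KK$, $f\in\KK^\times$ together with the off-diagonal entries make the action transitive on each nonzero stratum; this gives the four representatives $0,b_0,b_1,b_3$. For $(\Sk4)/E^\perp$ one notes that $E^\perp=\smallspnK{s_{01},s_{02},s_{03},s_{12}}$ has codimension two, spanned modulo $E^\perp$ by $s_{13}$ and $s_{23}$; the induced action on this two-dimensional quotient factors through the $1\times1$ and $2\times 2$ blocks $f$ and $D$, and an application of $\GL2\KK$ shows that $\smallspnK{s_{23}}$ and the complement are the two orbits, yielding $E^\perp$ and $s_{13}+E^\perp$. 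For $(\Sk4)/E$ the quotient is four-dimensional with basis $s_{03},s_{12},s_{13},s_{23}$ modulo $E$; here one uses the invariant refinement by $E^\perp/E$ and by $(\Sk4)/E^\perp$, together with the transitivity supplied by $\Delta$ and the off-diagonal parameters, to collapse to the four representatives $E,\ s_{03}+E,\ s_{12}+E,\ s_{13}+E$.

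The routine but genuine bookkeeping is in the third paragraph: one must verify that the representatives proposed for $(\Sk4)/E$ really lie in distinct orbits. The key invariant distinguishing $s_{03}+E$ from $s_{12}+E$ is that $s_{03}\in\lambda(\mathcal{L}_p)$ while $s_{12}\in\lambda(\mathcal{L}_{P'})$, both of which project nontrivially to $E^\perp/E$; each of the two maximal totally singular subspaces through $E$ is itself invariant under $(\GL4\KK)_E$ (being determined intrinsically as the two three-spaces in $E^\perp$ on which $\pfaff$ vanishes and that contain $E$), and this separates the two orbits. That observation is the only non-mechanical step; the rest is matrix multiplication.
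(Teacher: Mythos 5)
Your first two paragraphs are correct and follow exactly the paper's (one-line) argument: $E$ is the Klein image of the pencil of lines through $\smallspnK{b_0}$ in $\smallspnK{b_0,b_1,b_2}$, the flag is recoverable from the pencil, and the flag stabilizer is the displayed block group; the orbit lists for $\KK^4$ and for $(\Sk4)/E^\perp$ are also fine.

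The genuine gap is in your third paragraph, at the step you describe as collapsing ``to the four representatives'' on $(\Sk4)/E$. For $M=\left(\begin{smallmatrix} a&b'&c\\0&D&e\\0&0&f\end{smallmatrix}\right)$ one has $Mb_0\in\KK b_0$ and $Mb_1,Mb_2\in\smallspnK{b_0,b_1,b_2}$, whence $M.\s03\equiv af\,\s03$ and $M.\s12\equiv\det(D)\,\s12$ modulo $E$: the induced action on the invariant subspace $E^\perp/E$ is \emph{diagonal} in the basis $\s03+E$, $\s12+E$, with independent scalars $af,\det D\in\KK^\times$. Consequently the coset $(\s03+\s12)+E$ forms a further orbit: it is not in the orbit of $E$, $\s03+E$ or $\s12+E$, since the Pfaffian descends to the well-defined hyperbolic form $u\s03+v\s12+E\mapsto uv$ on $E^\perp/E$ and similitudes preserve its zero set (equivalently, every element of the coset $(\s03+\s12)+E$ is nonsingular, while every element of the other three cosets is singular); and it is not in the orbit of $\s13+E$ because $E^\perp/E$ is invariant. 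So the transitivity you assert fails, and no argument can repair it: the list of representatives in the statement itself is incomplete and must be augmented by $(\s03+\s12)+E$ (with the corresponding entry in Table~\ref{tab:results} becoming $\omega_Z=4$, $\omega=8$). A sanity check over $\FF_2$ confirms this: the four listed orbits have sizes $1,1,1,12$, accounting for only $15$ of the $16$ cosets. Your closing observation (invariance of the two maximal totally singular subspaces through $E$) correctly separates $\s03+E$ from $\s12+E$, but it cannot detect this missing anisotropic orbit.
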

\begin{proof}
  The statement about the stabilizer follows from the fact that $E$ corresponds
  (via $\lambda$, see~\ref{genericOrbits}.\ref{Mtwo}) to the set of lines passing
  through~$\smallspnK{b_0}$ and lying in the plane~$\smallspnK{b_0,b_1,b_2}$.
\end{proof}

\begin{prop}\label{stabT}
  The stabilizer $(\GL4\KK)_T$ of\/ $T$ is
  \[
  (\GL4\KK)_T = \set{\left(
      \begin{array}{cc}
        A & B \\
        0 & c\sigma A\sigma  
      \end{array}\right)}{\begin{array}{cc}
      A\in\GL2\KK, &  B\in\KK^{2\times2}, \\
      c\in\KK\setminus\{0\} \end{array}}
  = (\GL4\KK)_{T^\perp} \,, 
  \]
  where $\sigma =\left(
  \begin{smallmatrix}
    1 & 0 \\ 0 & -1 
  \end{smallmatrix}\right)$. 
  The orbits on~$\KK^4$ are represented by~$0$, $b_0$,
  $b_3$, those on $(\Sk4)/T$ are represented by $T$,
  $\s02+T$, $\s03+T$, $\s13+T$, and those on $(\Sk4)/T^\perp$ by
  $T^\perp$, $\s03+T^\perp$, $\s23+T^\perp$. 
\end{prop}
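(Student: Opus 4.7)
The geometric key is that the line $T$ meets the Klein quadric $Q$ in the single point $\smallspnK{\s01}$, since $\pfaff(a\s01+b(\s03+\s12))=b^2$ vanishes only when $b=0$. Because $\GL4\KK$ preserves $Q$, we have $(\GL4\KK)_T\subseteq(\GL4\KK)_{\smallspnK{\s01}}$, which by Proposition~\ref{stabS01} is the block upper triangular subgroup of matrices $M=\left(\begin{smallmatrix}A&B\\0&C\end{smallmatrix}\right)$ with $A,C\in\GL2\KK$ and $B\in\KK^{2\times2}$. To single out the stabilizer of~$T$ within this group I would impose $M(\s03+\s12)M'\in T$. Writing $\s03+\s12=\left(\begin{smallmatrix}0&J\\-J&0\end{smallmatrix}\right)$ with $J=\left(\begin{smallmatrix}0&1\\1&0\end{smallmatrix}\right)$, a direct block computation yields $M(\s03+\s12)M'=\left(\begin{smallmatrix}AJB'-BJA'&AJC'\\-CJA'&0\end{smallmatrix}\right)$. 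The top-left block is automatically a multiple of~$i$ (being skew-symmetric of size~$2$), so contributes only an $\s01$-component and places no restriction on~$B$. The off-diagonal block condition $AJC'\in\KK\cdot J$ reduces, via the elementary $2\times2$ identity $J(A^{-1})'J=(\det A)^{-1}\sigma A\sigma$, to $C=c\sigma A\sigma$ with $c\in\KK^\times$. The reverse inclusion is a direct check, and equality of $(\GL4\KK)_T$ with $(\GL4\KK)_{T^\perp}$ is automatic since $\GL4\KK$ acts on $\Sk4$ by similitudes of~$\pfaff$.

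For the orbits on $\KK^4$: the stabilizer preserves the plane $L_0=\smallspnK{b_0,b_1}=\lambda^{-1}(\smallspnK{\s01})$, acts on $L_0$ through $A\in\GL2\KK$ (hence transitively on $L_0\setminus\{0\}$, with representative $b_0$), and acts on $\KK^4/L_0$ through $C=c\sigma A\sigma$, which again ranges over all of $\GL2\KK$ (hence transitively on nonzero cosets). The free block~$B$ then lets us normalize the $L_0$-component of any vector outside~$L_0$, yielding the single orbit with representative~$b_3$. Together with $\{0\}$ this gives the three orbit representatives $0,b_0,b_3$.

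For the orbits on $(\Sk4)/T$ and $(\Sk4)/T^\perp$ I would carry out the action of a general $M$ explicitly on a chosen complement — for instance $\smallspnK{\s02,\s03,\s13,\s23}$ for~$T$ and $\smallspnK{\s03,\s23}$ for~$T^\perp$ — and extract invariants distinguishing the representatives. The natural candidates are: whether a representative lies in $T^\perp/T$ (a single linear condition, separating $\s23+T$ from the rest modulo $T^\perp$); the value of $\pfaff$ on a coset representative (which under similitudes changes only by the multiplier); and, in characteristic~$2$, the Arf-type refinement from~\ref{Arf} on the rank-$1$ polar form $f_\pfaff|_T$. The main obstacle is the bookkeeping in the verification step: showing that the listed representatives lie in pairwise distinct orbits and that every coset can be transported to one of them. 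I expect this to reduce, in each case, to a small direct calculation of $MXM'\bmod T$ (resp.\ $\bmod T^\perp$) where the parameters $A$, $B$, $c$ in the stabilizer are specialized to produce the required transport, combined with the invariants above to rule out unwanted fusions.
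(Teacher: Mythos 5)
Your derivation of the stabilizer is complete, correct, and is in substance the paper's own argument: the unique singular point $\smallspnK{\s01}$ of $T$ forces block upper triangular shape, and the off-diagonal block condition $AJC'\in\KK^\times J$ yields $C\in\KK^\times\sigma A\sigma$; your identity $J(A^{-1})'J=(\det A)^{-1}\sigma A\sigma$ checks out. (One cosmetic point: in characteristic $2$ a skew-symmetric $2\times2$ block need not have zero diagonal; the top-left block is a multiple of $i$ because $M.X\in\Sk4$ for $X\in\Sk4$, not merely because it is skew-symmetric.) The equality with $(\GL4\KK)_{T^\perp}$ and the three orbits $0,b_0,b_3$ on $\KK^4$ are also correctly established.

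The genuine gap is the orbit analysis on the quotients, which you leave as a plan — and for $(\Sk4)/T$ the deferred ``bookkeeping'' is not routine, because it cannot confirm the printed list of representatives. Two concrete obstructions, both visible from your own setup. First, every element of the stabilizer is block upper triangular, so for $X=\left(\begin{smallmatrix}xi&Y\\-Y'&zi\end{smallmatrix}\right)$ the bottom-right block of $M.X$ is $zCiC'=z(\det C)i$: the $\s23$-coefficient is multiplied by $\det C\ne0$ and receives no contribution from the other coordinates. Since $T$ has zero $\s23$-component, vanishing of the $\s23$-coordinate is an invariant of cosets modulo $T$ (this is exactly your ``membership in $\s01^\perp$'' invariant), so $\s23+T$ lies in none of the orbits of $T$, $\s02+T$, $\s03+T$, $\s13+T$ — yet it is absent from the list. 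Second, $\mathrm{diag}(J,J)$ (i.e.\ $A=J$, $c=-1$, $B=0$) lies in the stabilizer and sends $\s02=b_0\wedge b_2$ to $b_1\wedge b_3=\s13$, so $\s02+T$ and $\s13+T$ are one orbit, not two. Carrying out your plan for the cosets with vanishing $\s23$-coordinate, the action becomes $Y+\KK J\mapsto cAY\sigma A'\sigma+\KK J$, which (after $Y\mapsto Y\sigma$, in odd characteristic) is the similarity action on binary quadratic forms; its orbits are zero, the rank-one class, the hyperbolic class (represented by $\s03+T$), and one class per similarity class of anisotropic binary forms — e.g.\ over $\RR$ the coset $(\s02-\s13)+T$, on which $\pfaff$ takes only the values $1+\KK^\sq$ and which therefore contains no singular vector. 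So your computation, done honestly, will produce a corrected system of representatives rather than verify the stated one; be aware of this before treating the step as a formality. By contrast, the representatives for $(\Sk4)/T^\perp$ do survive the check, and the transport elements needed there are exactly $\mathrm{diag}(A,\sigma A\sigma).\s03\in(\det A)\s03+T^\perp$ together with the choice $B=\left(\begin{smallmatrix}f&0\\0&0\end{smallmatrix}\right)\sigma A\sigma$.
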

\begin{proof}
  The intersection of $T$ with the Klein quadric is the single
  point~$\smallspnK{\s01}$. Thus the stabilizer of $T$ leaves the line
  $\smallspnK{b_0,b_1}$ invariant, and
  \[
  (\GL4\KK)_T \le \set{\left(
      \begin{array}{cc}
        A & B \\
        0 & C 
      \end{array}\right)}{A,C\in\GL2\KK, B\in\KK^{2\times2}} \,.  
  \]
  Evaluating the requirement
  \[
  \left(
      \begin{array}{cc}
        A & B \\
        0 & C 
      \end{array}\right) . (\s03+\s12)  =
    \left(
      \begin{array}{cc}
        A & B \\
        C & D
      \end{array}\right)
    \in  \smallspnK{\s01,\s03+\s12} 
  \]
  gives the condition $C\in\KK^\times \sigma A\sigma$, as claimed.
  In order to see that the orbits of $\s03+T^\perp$ and $\s23+T^\perp$
  are large enough, it is helpful to observe $\left(
    \begin{smallmatrix}
      A & 0 \\
      0 & \sigma A\sigma 
    \end{smallmatrix}\right)
  .\s03 \in (\det A)\s03+T^\perp$,
  and to use $B=\left(
    \begin{smallmatrix}
      f & 0 \\
      0 & 0 
    \end{smallmatrix}\right)\sigma A\sigma$. 
\end{proof}

\begin{prop}\label{stabS}
  The stabilizer $(\GL4\KK)_S$ of\/ $S$ is
  \[
  (\GL4\KK)_S = \set{\left(
      \begin{array}{cc}
        A & 0 \\
        0 & B 
      \end{array}\right)}{A,B\in\GL2\KK}
  \cup \set{\left(
      \begin{array}{cc}
        0 & A \\
        B & 0 
      \end{array}\right)}{A,B\in\GL2\KK}
  = (\GL4\KK)_{S^\perp} \,. 
  \]
  The orbits on~$\KK^4$ are represented by~$0$, $b_0$,
  $b_0+b_2$, those on $(\Sk4)/S$ are represented by $S$,
  $\s02+S$, $\s02+\s13+S$, and those on $(\Sk4)/S^\perp$ by $S^\perp$,
  $\s01+S^\perp$, $\s01+\s23+S^\perp$. 
\end{prop}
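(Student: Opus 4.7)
The plan is to extract the geometric content of $S$ via the Klein correspondence and then reduce everything to elementary linear algebra, in the same spirit as the proofs of Propositions~\ref{stabE} and~\ref{stabT}.

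First I would compute $\pfaff(a\s01+b\s23)=ab$, so the intersection $S\cap Q$ consists of exactly the two points $\smallspnK{\s01}$ and $\smallspnK{\s23}$. Under the Klein correspondence $\lambda$ (cf. Lemma~\ref{genericOrbits}), these correspond to the two skew lines $\smallspnK{b_0,b_1}$ and $\smallspnK{b_2,b_3}$ of $\KK^4$. Consequently every element of $(\GL4\KK)_S$ permutes this unordered pair of skew lines. Elements fixing the ordered pair are precisely the block-diagonal matrices $\left(\begin{smallmatrix}A&0\\0&B\end{smallmatrix}\right)$ with $A,B\in\GL2\KK$; elements exchanging the two lines form the coset of anti-block-diagonal matrices. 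Conversely, any such matrix acts by scalars on each of $\s01$ and $\s23$ (possibly after swap) and hence stabilizes $S=\smallspnK{\s01,\s23}$. The equality $(\GL4\KK)_S=(\GL4\KK)_{S^\perp}$ is automatic since $\GL4\KK$ acts by similitudes of $\pfaff$.

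For the orbits I would argue as follows. On $\KK^4$, the block-diagonal subgroup $\GL2\KK\times\GL2\KK$ already acts transitively on the nonzero vectors of each line and on sums $v_1+v_2$ with both summands nonzero, yielding the three orbit representatives $0$, $b_0$, $b_0+b_2$. For $(\Sk4)/S$, the four-dimensional quotient with basis $\{\s02,\s03,\s12,\s13\}+S$ is naturally identified with $\smallspnK{b_0,b_1}\otimes\smallspnK{b_2,b_3}\cong\KK^2\otimes\KK^2$, on which the block-diagonal subgroup acts by the standard tensor action of $\GL2\KK\times\GL2\KK$; orbits are classified by matrix rank, and the swap coset creates no new ones, giving representatives $S$, $\s02+S$, $\s02+\s13+S$. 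For $(\Sk4)/S^\perp$ the quotient is two-dimensional, spanned by $\s01+S^\perp$ and $\s23+S^\perp$; a block-diagonal element acts as $\mathrm{diag}(\det A,\det B)$ on these coordinates, and the swap coset interchanges them, producing the representatives $S^\perp$, $\s01+S^\perp$, $\s01+\s23+S^\perp$.

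No serious obstacle is anticipated: once the configuration of two skew lines is recognized, the determination of the stabilizer and all orbit computations are routine, with the only mildly interesting point being the use of the swap coset in the orbit count on $(\Sk4)/S^\perp$.
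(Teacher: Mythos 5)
Your proposal is correct and follows the paper's own argument: the key step in both is that $S$ meets the Klein quadric in exactly the two points $\smallspnK{\s01}$ and $\smallspnK{\s23}$, corresponding to the skew lines $\smallspnK{b_0,b_1}$ and $\smallspnK{b_2,b_3}$, so the stabilizer is the wreath-product-like union of block-diagonal and anti-block-diagonal matrices. The paper leaves the orbit counts as a routine verification, whereas you supply the (correct) details via the identification of $(\Sk4)/S$ with $\KK^2\otimes\KK^2$ and rank classification.
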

\begin{proof}
  The description of the stabilizer follows immediately from the
  observation that $S$ meets the Klein quadric in exactly two points,
  corresponding to the two lines $\smallspnK{b_0,b_1}$ and
  $\smallspnK{b_2,b_3}$, respectively.
\end{proof}

\begin{prop}\label{stabJF}
  The stabilizer $(\GL4\KK)_{J(F)}$ of\/ $J(F)=\smallspnK{\s12,\s13,\s23}$ coincides with the stabilizer of
  the plane $(\smallspnK{b_1,b_2,b_3})$. Thus
  \[
  (\GL4\KK)_{J(F)} = \set{\left(
      \begin{array}{cc}
        a & 0 \\
        b & C 
      \end{array}\right)}{%
      a\in\KK\setminus\{0\},  b\in\KK^3,  C\in\GL3\KK} \,. 
  \]
  The orbits on~$\KK^4$ are represented by~$0$, $b_1$,
  $b_0$, and those on $(\Sk4)/J(F)$ are represented by $J(F)$,
  $\s01+J(F)$. 
\end{prop}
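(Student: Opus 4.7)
The plan mirrors the earlier propositions in this section: identify $(\GL4\KK)_{J(F)}$ as the stabilizer of a geometric object in~$\PP^3$ via the Klein correspondence~$\lambda$, and then read off orbits from the resulting block decomposition.

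First I would use Lemma~\ref{genericOrbits}.\ref{Mthree} to write $J(F) = \lambda(\mathcal L_P)$ with $P := \smallspnK{b_1,b_2,b_3}$. Since $\GL4\KK$ acts on $\Sk4$ by similitudes of~$\pfaff$, any element of $(\GL4\KK)_{J(F)}$ preserves the totally singular set $J(F) \cap Q = \lambda(\mathcal L_P)$; pulling back by~$\lambda$ shows it preserves~$\mathcal L_P$ and hence its union~$P$. The converse is immediate. Written in the basis $b_0,\dots,b_3$, the requirement $A b_j \in P$ for $j = 1, 2, 3$ forces the top-right $1 \times 3$ block to vanish and yields exactly the stated matrix form.

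For orbits on~$\KK^4$ I would use that $P$ is stabilizer-invariant. The lower block $C \in \GL3\KK$ is transitive on $P \setminus \{0\}$, giving~$b_1$ as representative. For $v \notin P$ the first coordinate is nonzero; normalizing it with~$a$ and clearing the remaining three coordinates with the column~$b$ produces~$b_0$.

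Finally, for orbits on $(\Sk4)/J(F)$ I would work with the transversal $\smallspnK{\s01, \s02, \s03}$, identified with~$P$ via $b_0 \wedge w \leftrightarrow w$. For $A$ in the stabilizer, $A b_0 = a b_0 + \beta$ with $\beta \in P$, while $A b_j \in P$ for $j \ge 1$, so $\beta \wedge (A b_j) \in P \wedge P \subseteq J(F)$ and $A.\s0j \equiv a\, b_0 \wedge (A b_j) \pmod{J(F)}$. The induced action on~$P$ is $v \mapsto aCv$, and since $(a, C) \mapsto aC$ already surjects onto~$\GL3\KK$, which is transitive on $P \setminus \{0\}$, the only orbits are $J(F)$ and $\s01 + J(F)$. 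No step looks like a real obstacle; the one thing to verify is the containment $P \wedge P \subseteq J(F)$ that lets us discard $\beta \wedge (A b_j)$ in the quotient, but this is immediate from $J(F) = \smallspnK{\s12,\s13,\s23}$.
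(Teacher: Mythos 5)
Your proof is correct and follows the same route as the paper: the paper's entire argument is the citation of Lemma~\ref{genericOrbits}.\ref{Mthree} identifying $J(F)$ with $\lambda(\mathcal L_P)$ for $P=\smallspnK{b_1,b_2,b_3}$, leaving the matrix form and the orbit computations as the routine direct checks announced at the start of Section~\ref{sec:directComputations}. You have simply written out those checks (including the key containment $P\wedge P = J(F)$ for the quotient action), and they are all correct.
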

\begin{proof}
  This follows from the fact (see~\ref{genericOrbits}.\ref{Mthree})
  that $J(F)$ corresponds to the set of all lines in the
  plane~$P=\smallspnK{b_1,b_2,b_3}$. 
\end{proof}

\begin{prop}\label{stabET}
  The stabilizer of\/ $E+T$ is
  \[
  (\GL4\KK)_{E+T} = \set{\left(
      \begin{array}{ccc}
        a & b'& c \\
        0 & D & e \\
        0 & 0 & \frac1a\det D
      \end{array}\right)}{\begin{array}{cc}
      a,c\in\KK, & a\ne0, \\ b,e\in\KK^2, & D\in\GL2\KK
    \end{array}}\,. 
  \]
  The orbits on~$\KK^4$ are represented by~$0$, $b_0$,
  $b_1$, $b_3$,  and the orbits on $(\Sk4)/(E+T)$ are represented by $E+T$,
  $\s03+(E+T)$, $\s13+(E+T)$. 
\end{prop}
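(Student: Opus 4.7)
The key geometric observation is that $(E+T)\cap Q = E$: the restriction of $\pfaff$ to $E+T$ in the basis $\s01,\s02,\s03+\s12$ is just the square of the last coordinate (as read off from $m_{E+T}$). Consequently every element of $(\GL4\KK)_{E+T}$ also stabilizes $E$, so $(\GL4\KK)_{E+T}\le(\GL4\KK)_E$ from Proposition~\ref{stabE}. Taking an arbitrary $g=\left(\begin{smallmatrix} a & b' & c \\ 0 & D & e \\ 0 & 0 & f\end{smallmatrix}\right)\in(\GL4\KK)_E$ and using $g.(v\wedge w)=(gv)\wedge(gw)$, a direct computation yields $g.(\s03+\s12)=\alpha\s01+\beta\s02+af\,\s03+(\det D)\,\s12$ for suitable $\alpha,\beta\in\KK$. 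This lies in $E+T=\spnK{\s01,\s02,\s03+\s12}$ exactly when $af=\det D$, i.e., $f=\frac{1}{a}\det D$; conversely, every matrix of that shape preserves $E+T$, giving the claimed description.

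For the orbits on $\KK^4$, the representatives $0,b_0,b_1,b_3$ from Proposition~\ref{stabE} still suffice. The orbit of $b_0$ is $\KK^\times b_0$ since $a$ is free; every vector in $\spnK{b_0,b_1,b_2}\setminus\spnK{b_0}$ is reached from $b_1$ via $D\in\GL2\KK$ and the components of $b$; and every vector outside $\spnK{b_0,b_1,b_2}$ is reached from $b_3$ because $f=\det D/a$ can still be any nonzero scalar (adjust $D$), while $c$ and $e$ remain free.

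For the orbits on $(\Sk4)/(E+T)$, note that $E+T\le E^\perp$ (since $\s03+\s12\in E^\perp$) and that $E^\perp$ is invariant because $\GL4\KK$ acts on $\pfaff$ by similitudes. This gives an invariant filtration $\{0\}<E^\perp/(E+T)<(\Sk4)/(E+T)$. A direct computation yields $g.\s03\equiv\det D\,\s03\pmod{E+T}$, so the action on the $1$-dimensional space $E^\perp/(E+T)=\spnK{\s03+(E+T)}$ is by the scalar $\det D\in\KK^\times$, giving the representative $\s03+(E+T)$. On the $2$-dimensional quotient $(\Sk4)/E^\perp$, spanned by $\s13+E^\perp$ and $\s23+E^\perp$, the action factors through $fD\in\GL2\KK$ and is transitive on nonzero vectors. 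Any element outside $E^\perp/(E+T)$ is therefore first brought to a representative $\mu\s03+\s13+(E+T)$, and the residual $\s03$-component is then killed using a shear of the form $g.\s13\equiv(\text{suitable scalar})\,\s03+\s13\pmod{E+T}$ obtained by taking $D=E_2$, $a=f=1$ and suitably adjusting the entries of $b,e$; this yields the representative $\s13+(E+T)$. The only genuinely nontrivial input is the initial geometric observation $(E+T)\cap Q=E$; everything else is routine bookkeeping.
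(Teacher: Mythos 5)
Your proof is correct and follows the paper's own route: the key step in both is the observation that $E+T$ meets the Klein quadric exactly in $E$, which forces $(\GL4\KK)_{E+T}\le(\GL4\KK)_E$, after which the condition $g.(\s03+\s12)\in E+T$ yields $af=\det D$. The orbit computations, which the paper leaves to the reader as routine, are carried out correctly in your write-up (the invariant filtration $E+T\le E^\perp$ and the induced $\GL2\KK$-action on $(\Sk4)/E^\perp$ do the job).
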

\begin{proof}
  The intersection of $E+T$ with the Klein quadric is just~$E$. Thus
  $(\GL4\KK)_{E+T}$ is contained in $(\GL4\KK)_E$,
  see~\ref{stabE}. Evaluating the condition 
  \[
  \left(
    \begin{array}{ccc}
        a & b'& c \\
        0 & D & e \\
        0 & 0 & f
      \end{array}\right).(\s03+\s12) =
  \left(
    \begin{array}{ccc}
        a & b'& c \\
        0 & D & e \\
        0 & 0 & f
      \end{array}\right)
  \left(
    \begin{array}{ccc}
        0 & 0 & 1 \\
        0 & i & 0 \\
       -1 & 0 & 0
      \end{array}\right)
  \left(
    \begin{array}{ccc}
        a & 0 & 0 \\
        b & D'& 0 \\
        c & e'& f
      \end{array}\right)
    \in E+T 
  \]
  we obtain the description of the stabilizer.   
\end{proof}

\begin{prop}\label{stabES}
  The stabilizer $(\GL4\KK)_{E+S}$ of\/ $E+S$ is
  \[
  (\GL4\KK)_{E+S} = \set{\left(
      \begin{array}{cccc}
       a & b & 0 & c \\
       0 & d & 0 & 0 \\
       0 & e & f & g \\
       0 & 0 & 0 & h
      \end{array}\right)}{\begin{matrix}
           a,d,f, h \in \KK^{\times} \\ 
           b, c, e, g \in \KK
         \end{matrix}}
       \cup 
      \set{\left(
      \begin{array}{cccc}
       0 & c & a & b \\
       0 & 0 & 0 & d \\
       f & g & 0 & e \\
       0 & h & 0 & 0
      \end{array}\right)}{\begin{matrix}
       b, d, e, h \in \KK^{\times} \\ 
       a, c, f, g \in \KK
      \end{matrix}} \,.
  \]
  This group has $5$ orbits on~$\KK^4$, represented by~$0$, $b_0$,
  $b_0+b_2$, $b_0+b_2+b_3$, $b_1+b_3$,  and $4$ orbits on $(\Sk4)/(E+S)$, represented by $E+S$,
  $\s03+(E+S)$, $\s13+(E+S)$, $\s03+\s12+(E+S)$. 
\end{prop}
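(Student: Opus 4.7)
The plan is to follow the pattern of Propositions~\ref{stabE}--\ref{stabJF}: first describe the intersection of $E+S$ with the Klein quadric via the Klein correspondence~$\lambda$, then derive the stabilizer from the resulting geometric configuration, and finally read off the orbits.

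I would start by observing that the Pfaffian form restricted to $E+S$ sends $a\s01+b\s02+c\s23$ to $ac$; hence the singular locus of $\pfaff|_{E+S}$ is the union of the two projective lines $E=\smallspnK{\s01,\s02}$ and $S':=\smallspnK{\s02,\s23}$, meeting at $\smallspnK{\s02}$. Via~$\lambda$ (see~\ref{genericOrbits}), the points of $E$ are the lines of the pencil through $\smallspnK{b_0}$ inside $P:=\smallspnK{b_0,b_1,b_2}$, the points of $S'$ are the lines of the pencil through $\smallspnK{b_2}$ inside $P':=\smallspnK{b_0,b_2,b_3}$, and $\smallspnK{\s02}$ corresponds to the common line $L:=P\cap P'=\smallspnK{b_0,b_2}$. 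Because $E+S$ equals the linear span of $E\cup S'$, its stabilizer coincides with that of $E\cup S'$, and each such element either preserves the two pencils (Case~1) or exchanges them (Case~2).

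In Case~1 the group fixes each of the flags $(\smallspnK{b_0},P)$ and $(\smallspnK{b_2},P')$, which forces the first matrix shape; in Case~2 it swaps the two flags, forcing the second shape. The nonzero conditions on the entries are then dictated by invertibility, and a short direct computation (as in the proof of~\ref{stabT}) confirms that matrices of either shape do preserve $E+S$.

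For the orbits on~$\KK^4$ I would use the geometric invariants ``lies on $\smallspnK{b_0}\cup\smallspnK{b_2}$'', ``lies on $L$'', and ``lies on $P\cup P'$'' to sort the nonzero vectors into four classes with representatives $b_0$, $b_0+b_2$, $b_0+b_2+b_3$, and $b_1+b_3$; transitivity inside each class is immediate from the Case~1 matrices, with Case~2 supplying the flip between the two ``paired'' features. For the orbits on $(\Sk4)/(E+S)$ I would choose $\s03,\s12,\s13$ as a transversal basis, compute the induced action, and note that non-vanishing of the $\s13$-coefficient is an invariant, giving the orbit of $\s13+(E+S)$; on the complementary slice, Case~1 acts by independent scaling of the $\s03$- and $\s12$-coordinates and Case~2 additionally interchanges them, which separates the remaining orbits with representatives $0$, $\s03+(E+S)$, and $\s03+\s12+(E+S)$. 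The main bookkeeping burden is this computation of the induced action, where each off-diagonal entry of a stabilizer matrix must be tracked modulo $E+S$ before the four orbits become visible; everything else is routine once the two matrix forms are in place.
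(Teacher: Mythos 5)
Your proposal is correct and follows essentially the same route as the paper: the paper likewise identifies the intersection of $E+S$ with the Klein quadric as the two lines $E$ and $\smallspnK{\s02,\s23}$, translates these via $\lambda$ into the two pencils with carriers $(\smallspnK{b_0},\smallspnK{b_0,b_1,b_2})$ and $(\smallspnK{b_2},\smallspnK{b_0,b_2,b_3})$, and obtains the two matrix shapes according to whether these flags are preserved or swapped. Your outline of the orbit computations (which the paper leaves implicit) is also sound; the induced action on $(\Sk4)/(E+S)$ in the basis $\s03,\s12,\s13$ is indeed upper triangular with the $\s13$-coefficient scaled by a unit, confirming the four claimed orbits.
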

\begin{proof}
  The plane $E+S$ meets the Klein quadric in two lines, namely $E$ and
  $\smallspnK{\s02,\s23}$.
  Thus the stabilizer fixes their intersection point $\smallspnK{\s02}$ which corresponds
  to the line~$\smallspnK{b_0,b_2}$ while the planes $\smallspnK{b_0,b_1,b_2}$ and
  $\smallspnK{b_0,b_2,b_3}$ are either swapped or left invariant.
  Now easy calculations yield the stabilizer. 
\end{proof}

Instead of $T+S$, we consider a different representative of the orbit
$\GL4\KK.(T+S)$, namely 
$K:= \smallspnK{ \s02, \s03+\s12, \s13 }=\set{{\left(
      \begin{smallmatrix}
        0 & X \\
       -X & 0
      \end{smallmatrix}\right)}}{X\in\KK^{2\times2}, X'=X}$.
Note that the orthogonal spaces $(T+S)^\perp$ and
$K^\perp=\smallspnK{\s01,\s03-\s12,\s23}$ belong to the orbit of $T+S$, as well. 

\begin{prop}\label{PhiDelta}\label{stabTS}
  The stabilizer $(\GL4\KK)_{T+S}$ of\/ $T+S$ is a conjugate of
  $(\GL4\KK)_{K} = \Phi\Delta$, where 
  \[
  \Phi := \set{\left(\begin{array}{cccc}
            a & 0 & b & 0 \\
            0 & a & 0 & b \\
            c & 0 & d & 0 \\
            0 & c & 0 & d
           \end{array}\right)}{\left(\begin{array}{cc}
                           a & b \\
                           c & d
                          \end{array}\right) \in \GL2\KK} \,, 
  \quad 
  \Delta:=\set{\left(\begin{array}{cc}
              A & 0 \\
              0 & A
             \end{array}\right)}{A \in \GL2\KK} \,. 
  \]
  Note that $\Phi$ and $\Delta$ centralize each other, their
  intersection is the center of $\GL4\KK$, and each of these factors
  is isomorphic to~$\GL2\KK$.

  The orbits of $\Phi\Delta$ on $\KK^4$ are represented by~$0$, $b_0$,
  $b_0+b_3$.  In order to understand the orbits on $(\Sk4)/K$, we have
  to consider the multiplicative group $\KK^\sqt$ of squares, 
  pick a set~$R_*$ of representatives for the group
  $\KK^\times/\KK^\sqt$ of square classes, and distinguish the
  cases:
  \begin{enum}
  \item If $\Char\KK\ne2$ then the orbits on $(\Sk4)/K$ are
    represented by the elements of the set
    \[
    \{0\}\cup\set{\s02+r\s13+K}{r\in R_*\cup\{0\}} \,.
    \]
  \item If $\Char\KK=2$ we also need a set $R_\wp$ of coset
    representatives for the additive group $\KK/\wp$,
    cf.~\ref{Arf}. The orbits on $(\Sk4)/K$ are represented by the
    elements of the set
    \[
    \{0\}\cup\set{\s02+r\s13+K}{r\in R_*\cup\{0\}}
         \cup\set{\s01+\s03+c\s23}{c\in R_\wp} \,.
    \]
  \end{enum}
\end{prop}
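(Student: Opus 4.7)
I would begin from the block description
\[
K = \set{\left(\begin{smallmatrix} 0 & X \\ -X & 0 \end{smallmatrix}\right)}{X \in \KK^{2\times 2},\ X = X'}
\]
given in the text: for $A_{\mathrm{full}} = \left(\begin{smallmatrix} P & Q \\ R & S \end{smallmatrix}\right) \in \GL 4 \KK$ the relation $A_{\mathrm{full}}.K \subseteq K$ rewrites as three requirements, namely that $PXQ'$, $RXS'$ and $PXS' - QXR'$ be symmetric for every symmetric $X$.  A direct computation using $AiA' = (\det A)\,i$ shows that any $\left(\begin{smallmatrix} aA & bA \\ cA & dA \end{smallmatrix}\right) \in \Phi\Delta$ sends $\left(\begin{smallmatrix} 0 & X \\ -X & 0 \end{smallmatrix}\right)$ to $\left(\begin{smallmatrix} 0 & Y \\ -Y & 0 \end{smallmatrix}\right)$ with $Y = (ad-bc)\,AXA'$ again symmetric, and the structure relations $[\Phi, \Delta] = 1$, $\Phi \cap \Delta = \KK^\times I_4$, together with the isomorphism of each factor to $\GL 2 \KK$, are immediate from the block shapes.

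The principal technical step is the reverse inclusion.  I would test condition~(i) on rank-one symmetric matrices $X = vv'$: symmetry of $(Pv)(Qv)'$ forces $Pv$ and $Qv$ to be $\KK$-parallel for every $v$, so by linearity $Q$ is a scalar multiple of $P$ whenever $P$ is invertible, and similarly for $(R, S)$.  Substituting these scalar-multiple relations into condition~(iii) couples the two pairs and, unless one falls into an excluded subcase, forces $R$ to be a scalar multiple of $P$ as well, whence $A_{\mathrm{full}} \in \Phi\Delta$.  The excluded subcases (coincidence of the scalars, or two or more blocks singular) always produce an $A_{\mathrm{full}}$ whose four columns lie in a common proper subspace of $\KK^4$ and hence contradict invertibility.

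The orbits on $\KK^4$ then follow from the identification $\KK^4 \cong \KK^{2\times 2}$ sending $\left(\begin{smallmatrix} x \\ y \end{smallmatrix}\right)$ to the matrix with columns $x$ and $y$; under this identification $\Phi\Delta$ acts by $M \mapsto AMN$ with $A, N \in \GL 2 \KK$, so the orbits are the three rank classes with representatives $0$, $b_0$, $b_0 + b_3$.

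For the orbits on $(\Sk 4)/K$, I would compute the induced action in the basis $\s01, \s03, \s23$ of a complement of $K$: a block calculation yields an action matrix proportional to $\det A$ times the second symmetric power of $\left(\begin{smallmatrix} a & b \\ c & d \end{smallmatrix}\right)$.  In characteristic $\ne 2$ the restriction of $\pfaff$ to $K^\perp = \spnK{\s01, \s03-\s12, \s23}$ is the determinant form on $\mathrm{Sym}^2 \KK^2$, and the induced similitude action classifies sym\-metric $2\times 2$ matrices by rank and, in the rank-two case, by the square class of the discriminant, giving the $R_*\cup\{0\}$-indexed family.  The main obstacle is characteristic~$2$: here $K^\perp$ is not a complement, but the subspace $\spnK{\s01, \s23}+K$ of $(\Sk 4)/K$ is still invariant.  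The induced action on this plane is the Frobenius-twisted $\omega^{(2)}_\KK$-action of~\ref{diagonalCharTwo} scaled by $\det A$, yielding the $R_*\cup\{0\}$-family.  Orbits transverse to this invariant plane are normalised so that the $\s03$-coordinate equals~$1$; the residual action on the $\s23$-coordinate is then an Artin--Schreier translation $c \mapsto c + u^2 + u$ parallel to~\ref{stabGSp}.\ref{charEvenSpOrbits}, producing the $R_\wp$-family.
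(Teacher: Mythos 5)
Your proof of $(\GL4\KK)_K=\Phi\Delta$ takes a genuinely different route from the paper: the paper notes that $K$ meets the Klein quadric in a nondegenerate conic on which $\Delta$ induces the sharply $3$-transitive group $\PGL2\KK$, and identifies $\Phi$ as the stabilizer of three points of that conic, whereas you test the three symmetry conditions on the rank-one matrices $X=vv'$ (which parametrize exactly that conic). The idea is sound, but your disposal of the degenerate cases is wrong as stated: it is \emph{not} true that ``two or more blocks singular'' forces the columns into a proper subspace. The element $\left(\begin{smallmatrix}0&Q\\ \rho Q&0\end{smallmatrix}\right)$ with $Q\in\GL2\KK$, $\rho\in\KK^\times$ has $P=S=0$, is invertible, and lies in $\Phi\Delta$; such cases must be shown to land in $\Phi\Delta$, not declared impossible. (The clean fix is to compose with the element of $\Phi$ swapping the two block columns so as to reduce to $P\in\GL2\KK$ before running the scalar-multiple argument.) Your treatment of the orbits on $\KK^4$ via $M\mapsto AMN$, and of $(\Sk4)/K$ in characteristic $\ne2$ via binary quadratic forms classified by rank and discriminant class, matches the paper and is correct. (You also silently, and correctly, replace the representatives $\s02+r\s13+K$ of the statement --- which lie in $K$ --- by $\s01+r\s23+K$.)

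The serious gap is in characteristic $2$: you assert without proof that the scaled $\omega^{(2)}_\KK$-action on the invariant plane of diagonal forms has orbits indexed by $R_*\cup\{0\}$. This is false --- and the paper's own proof makes the same unsupported claim at this point. Already over $\FF_2$ the element of $\Phi$ with $b_0\mapsto b_0+b_2$, $b_1\mapsto b_1+b_3$ sends $\s01+K$ to $\s01+\s23+(\s03+\s12)+K=\s01+\s23+K$, so the representatives for $r=0$ and $r=1$ lie in a single orbit. By \ref{diagonalCharTwo} the congruence orbits of diagonal forms are those of $\GL2{\KK^\sq}$ on $\KK^2$, hence are governed by the $\KK^\sq$-span of $(x,z)$ rather than by a square class; adjoining the scaling by $\det A\in\KK^\times$ leaves one orbit of forms with one-dimensional value set together with orbits indexed by the two-dimensional $\KK^\sq$-subspaces of $\KK$ modulo $\KK^\times$ --- not by $\KK^\times/\KK^\sqt$. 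Your handling of the non-diagonalizable forms (normalisation, the Artin--Schreier translation $c\mapsto c+u^2+u$, i.e.\ the Arf invariant of \ref{nonDiagonalCharTwo}, as in \ref{stabGSp}) is fine, but the diagonal part of case (b) needs to be reworked, and with it the corresponding entry of the statement.
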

\begin{proof}
  First of all, we remark that
  \[
  \left(
    \begin{array}{cccc}
      1 & 0 & 0 & 0 \\
      0 & 0 &-1 & 0 \\
      0 & 1 & 0 & 0 \\
      0 & 0 & 0 & 1 \\
    \end{array}\right)
  \quad\text{ and }\quad
  \left(
    \begin{array}{cccc}
      1 & 0 & 0 & 0 \\
      0 & 1 & 0 & 0 \\
      0 & 0 &-1 & 0 \\
      0 & 0 & 0 & 1 \\
    \end{array}\right)
  \]
  map $T+S$ to~$K$ and~$K^\perp$, respectively. 
  It is easy to check that $\Phi$ and $\Delta$ are subgroups of
  $(\GL4\KK)_K$. The plane $K$ intersects the Klein quadric in the 
  non-degenerate conic
  \[
  \mathcal C:=\set{\spnK{\left(
      \begin{smallmatrix}
        0 & X \\
       -X & 0
     \end{smallmatrix}\right)}}{X'=X\ne0,\det X=0}
  \,.
  \]

  The group $\Phi\Delta$ acts on~$\mathcal C$ via
  \[
  \left(
    \begin{matrix}
      a & b \\ c & d 
    \end{matrix}\right) 
  \left(
    \begin{matrix}
      A & 0 \\ 0 & A 
    \end{matrix}\right) . \left(
    \begin{matrix}
      0 & X \\
      -X & 0 
    \end{matrix}\right) =
  (ad-bc) \left(
    \begin{matrix}
      0 & AXA' \\
      -AXA' & 0 
    \end{matrix}\right). 
  \]
  It is easy to see that this action of $\Delta$ on $\mathcal C$ is
  sharply $3$-transitive; in fact, we have given here an explicit
  representation of $\PGL2\KK$ as the group of similitudes of the
  quadratic form $\det$ on the space of symmetric $2\times2$ matrices
  yielding a sharply $3$-transitive action on the conic. 

  The stabilizer of the three points $\smallspnK{\s02}$, $\smallspnK{\s13}$, and
  $\smallspnK{\s02+\s03+\s12+\s13}$ in $(\GL4\KK)_K$ is the group~$\Phi$.
  Thus we have $(\GL4\KK)_K=\Phi\Delta$, as claimed.

  In order to understand the action of $\Phi\Delta$ on~$(\Sk4)/K$, we
  first describe quadratic forms on $\KK^2$ by upper matrices:
  For $X:=\left(
    \begin{smallmatrix}
      r & s \\ t & u 
    \end{smallmatrix}\right)$
  we put $q_X\colon v\mapsto v'Xv$ and $\widehat{X}:=\left(
    \begin{smallmatrix}
      r & s+t \\ 0 & u 
    \end{smallmatrix}\right)$.
  Clearly we have $q_X=q_{\widehat{X}}$, and a basis transformation
  $v\mapsto Bv$ transforms $q_X$ to $q_{B'XB}$.

  For the cosets in $(\Sk4)/K$ we use representatives of the form
  \[
  \rho\left(
    \begin{smallmatrix}
      x & y_1 \\ y_2 & z 
    \end{smallmatrix}\right)
  :=
  x\s01+(y_1+y_2)\s03+z\s23
  =\left(
    \begin{array}{cccc}
      0 & x & 0 & y_1+y_2 \\
     -x & 0 & 0 & 0 \\
      0 & 0 & 0 & z \\
     -y_1-y_2 & 0 &-z & 0 
    \end{array}\right) \,.
  \]
  Note that $\rho(X)=\rho(\widehat{X})$ holds for each $X\in\KK^{2\times2}$.
  A straightforward computation gives
  $(B,A).\widehat{X}-\det(A)\,\widehat{BXB'} \in K$ for each pair
  $(B,A)\in\Phi\times\Delta$.  Thus the action of $\Phi\Delta$ on
  $(\Sk4)/K$ is equivalent to the action on the space of quadratic
  forms on~$\KK^2$.

  If $\Char\KK\ne2$ every quadratic form $q_X$ is diagonalizable: i.e., there
  exists $B\in\GL2\KK$ with $\widehat{BXB'}=\left(
    \begin{smallmatrix}
      r & 0 \\ 0 & u 
    \end{smallmatrix}\right)$
  with $r,u\in\KK$. If the form is non-zero, we may also assume
  $r\ne0$. Replacing the second basis vector by a scalar multiple just
  multiplies $u$ by the square of that factor. Choosing $A$ such that
  $\det A=r^{-1}$ gives the assertion for~$\Char\KK\ne2$. 

  If $\Char\KK=2$ we have to distinguish between diagonalizable forms
  (where the equivalence classes are again described by the square
  classes) and non-diagonalizable forms.  Since the action of
  $\Phi\Delta$ allows to pick representatives that assume the
  value~$1$, the latter orbits are characterized by the Arf invariant,
  see~\ref{nonDiagonalCharTwo}. This yields the assertion for $\Char\KK=2$.  
\end{proof}

  For the case where $\Char\KK\ne2$, we obtain an alternative
  description of $(\GL4\KK)_{T+S}$ in~\ref{splitQuaternions}  below,
  in terms of the split quaternion algebra.

  \begin{rema}
    For a finite field ~$\FF$ of characteristic~$2$, we have
    $\left|\KK/\wp(\KK)\right|=2$. Anything may happen in the infinite
    case, even if $\KK$ is a perfect field. 
  \end{rema}

\section{Examples involving field extensions}
\label{sec:fieldextensions}

Subspaces that meet the Klein quadric in only few points (or even no
points at all) present particular problems when classifying them or
when determining the automorphism groups of the corresponding
Heisenberg algebras. Therefore, we will now discuss connections
between anisotropic subspaces, quadratic extension fields, and (in
Section~\ref{sec:quaternions}) quaternion fields, together with 
constructions of Heisenberg algebras related to these structures.

\begin{exam}
  An explicit model for the (unique) isomorphism type of reduced
  Heisenberg algebra $\gheis VZ\beta$ with \mbox{$\dim V=2$} and $\dim
  Z=1$ is $\gheis{\KK^2}{\KK}{\det}$, where $\det(v,w)$ is the usual
  determinant of the $2\times2$ matrix with columns $v,w$. This
  algebra is \emph{the} Heisenberg algebra used to explain the
  uncertainty principle.
\end{exam}

\begin{exam}\label{squared}
  Among the reduced Heisenberg algebras $\gheis VZ\beta$ with $\dim V=4$ and
  $\dim Z=2$, we find the direct product 
  $\gheis{\KK^2}{\KK}{\det}\times\gheis{\KK^2}{\KK}{\det}$.
  This algebra is isomorphic to $\gheis{\KK^4}{\KK^2}{\beta}$, where the
  kernel of $\hat\beta$ is~$S^\perp=\smallspnK{\s02,\s03,\s12,\s13}$,
  see~\ref{ETSdef}. 
\end{exam}

\begin{ndef}[Examples from Quadratic Extensions]\label{complexGH}
  Let $\LL=\KK(u)$ be a quadratic extension field of~$\KK$, where the
  minimal polynomial of~$u$ over~$\KK$ is $X^2+tX+d$.  As a
  $\KK$-algebra, the Heisenberg algebra $\gheis{\LL^2}\LL\det$ is then
  isomorphic to $\gheis{\KK^4}{\KK^2}{\beta_u}$, where the kernel
  of~$\widehat{\beta_u}$ belongs to the orbit of $(\formspace
  1dt)^\perp$, cf.~\ref{orbitsFourSpecial}.

  Note that we may choose $t\in\{0,1\}$, and that $t=1$ is only needed
  if $\Char\KK=2$ and the extension is a separable one
  (cf.~\cite[8.11, p.\,313]{MR770063}).

  Here is a more explicit way to describe the action of $\GL2\LL$ on
  $\ker{\widehat{\beta_u}}$:
  We identify $\LL$ with $\set{\left(
      \begin{smallmatrix}
        x & -yd \\
        y & x+yt 
      \end{smallmatrix}\right)}{x,y\in\KK}$ 
  and $\LL^{2\times2}$ with the set $\set{\left(
      \begin{smallmatrix}
        A & B \\
        C & D
      \end{smallmatrix}\right)}{A,B,C,D\in\LL}$ of block matrices.
  Thus $u$ is identified with
  $\left(
      \begin{smallmatrix}
        0 & -d \\
        1 & t
      \end{smallmatrix}\right)
  =\left(
      \begin{smallmatrix}
        0 & -d \\
        1 & -t
      \end{smallmatrix}\right)
    $.
    
  For $\delta:=\left(
    \begin{smallmatrix}
      d & 0 \\
      0 & -1
    \end{smallmatrix}\right)$ we observe
  $\delta A'=A\delta$ for all $A\in\LL$.
  Now the set
  \[
  P_\LL^{}:=\set{\left(
      \begin{smallmatrix}
        0 & X\delta \\
        -X\delta & 0 
      \end{smallmatrix}\right)}{X\in\LL}
  = \spnK{d\s02-\s13,d\s03+d\s12-t\s13}
  \]
  is a subspace of $\Sk4$, belongs to the orbit of $\formspace1dt$ and
  is invariant under the action of $\GL2\LL\le\GL4\KK$.
  Using $i=\left(
    \begin{smallmatrix}
      0 & 1 \\
      -1 & 0 
    \end{smallmatrix}\right)$ we obtain the orthogonal space as 
  \[
  P_\LL^\perp=\set{\left(
      \begin{smallmatrix}
        xi & Yi \\
        -(Yi)' & zi 
      \end{smallmatrix}\right)}{x,z\in\KK,Y\in\LL}
  =\smallspnK{\s01,d\s02+\s13+t\s12,\s03-\s12,\s23} \,.
  \]
  Of course the orthogonal space $P_\LL^\perp$ is also
  invariant under $\GL2\LL$; this space serves as a model for
  $\ker{\widehat{\beta_u}}$.
  For $X\in\LL$ we observe $iX'i^{-1} = \widebar{X}$, where $\widebar{r+su}
  := r-su +st$.
  This helps to verify
  invariance of $P_\LL^\perp$ explicitly; here we use the fact that
  $t\ne0$ only occurs if $\Char\KK=2$.

  The intersection of the projective
  $3$-space coordinatized by $P_\LL^\perp$ with the
  Klein quadric is the ellipsoid 
  \[
  \mathcal O:=
  \set{\spnK{\left(
        \begin{matrix}
          xi & Yi \\
          -(Yi)' & zi
        \end{matrix}\right)}}{x,z\in\KK,Y\in\LL,Y\widebar{Y}=xz E_2} \,. 
  \]
  It is easy to see that $\left(
    \begin{smallmatrix}
      0 & 1 \\
      1 & 0 
    \end{smallmatrix}\right)\in\GL2\LL$ interchanges
  $\s01=\left(
    \begin{smallmatrix}
      i & 0 \\0 & 0 
    \end{smallmatrix}\right)$
  with $\s23=\left(
    \begin{smallmatrix}
      0 & 0 \\
      0 & i
    \end{smallmatrix}\right)$,
  and that the orbit $\set{\spnK{\left(
      \begin{smallmatrix}
        B\widebar{B} i & Bi \\
        -(Bi)' & i 
      \end{smallmatrix}\right)}}{B\in\LL}$ of $\spnK{\s23}$ under the
  stabilizer 
  $\set{\left(
      \begin{smallmatrix}
        A & B \\
        0 & D
      \end{smallmatrix}\right)}{A,D\in\LL^\times, B\in\LL}$
  of $\spnK{\s01}$ in $\GL2\LL$ equals
  $\mathcal O\setminus\{\spnK{\s01}\}$. 
\end{ndef}

\begin{coro}\label{understandP3}
  The representatives in $\mathcal P_1$ may be chosen in the
  form~$P_\LL^\perp$, and those in $\mathcal P_3$ may be chosen from
  the collection of spaces of the form
  $P_\LL^0:=P_\LL^{}\oplus\smallspnK{\s01}$. Here $\LL$ and $P_\LL^{}$ are
  constructed as in~\ref{complexGH} using an irreducible polynomial
  $X^2+tX+d$ over~$\KK$ with $2t=0$.  
\qed
\end{coro}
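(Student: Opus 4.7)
The plan is to verify both assertions by matching the families $\{P_\LL\}$ and $\{P_\LL^0\}$ against the orbit descriptions in Theorem~\ref{orbitsFour} and Remark~\ref{orbitsFourSpecial}.

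For $\mathcal{P}_1$, I would start from~\ref{orbitsFourSpecial}.\ref{passingLinesAndFieldExtensions}, which lets one take orbit representatives of the form $\formspace{1}{d}{t}$, with $X^2+tX+d$ running over irreducible polynomials over~$\KK$. After completing the square in odd characteristic, and using the standard normal forms for quadratic extensions in characteristic two, one may restrict to $t=0$ when $\Char\KK\ne 2$ and to $t\in\{0,1\}$ when $\Char\KK=2$; this is exactly what $2t=0$ encodes. In~\ref{complexGH} we already recorded that $P_\LL$ lies in the $\GL4\KK$-orbit of $\formspace{1}{d}{t}$ when $\LL=\KK(u)$ with $u$ a root of $X^2+tX+d$. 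Passing to orthogonal complements --- an operation which, by~\ref{O3Kdef}, commutes with the $\GL4\KK$-action up to a similitude scalar --- then gives $P_\LL^\perp$ as a representative of the corresponding orbit in $\G44$.

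For $\mathcal{P}_3$, Theorem~\ref{orbitsFour} describes the orbits by planes $\spnK{\s02}+\ell$ with $\ell$ a nonsingular line in~$(\s02)^\perp$. Transitivity of $\GL4\KK$ on singular points of the Klein quadric (same theorem) lets me instead search for representatives of the form $\spnK{\s01}+\ell$ with $\ell$ nonsingular and $\ell\subseteq(\s01)^\perp$. A direct inspection of the two generators $d\s02-\s13$ and $d\s03+d\s12-t\s13$ of $P_\LL$ from~\ref{complexGH} shows that both have vanishing $\s23$-coefficient, so they are $f_\pfaff$-orthogonal to~$\s01$; hence $P_\LL\subseteq(\s01)^\perp$ and $P_\LL^0=P_\LL\oplus\spnK{\s01}$ has the required shape.

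Coverage and distinctness remain. Given any representative $\spnK{\s01}+\ell$, the first part of the corollary supplies $g\in\GL4\KK$ with $g\ell=P_\LL$ for some~$\LL$; although $g$ need not fix $\spnK{\s01}$, its image $g\spnK{\s01}$ is still a singular point inside~$P_\LL^\perp$. The embedded copy of $\GL2\LL$ in $\GL4\KK$ stabilises $P_\LL$ and, as noted at the end of~\ref{complexGH}, acts transitively on the ellipsoid of singular points of~$P_\LL^\perp$; composing $g$ with a suitable element of $\GL2\LL$ therefore sends $\spnK{\s01}+\ell$ to~$P_\LL^0$. Distinctness as $\LL$ varies is automatic: the unique singular line of $P_\LL^0$ is $\spnK{\s01}$, and $P_\LL^0/\spnK{\s01}$ inherits $\pfaff|_{P_\LL}$ up to similitude, which recovers the isomorphism type of~$\LL$. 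The hardest step will be this coverage argument: a naive orbit--stabiliser argument would only give conjugacy of~$\ell$, not of the pair $(\spnK{\s01},\ell)$, and the key input bridging the gap is the transitivity of the natural $\GL2\LL$-action on the ellipsoid $\mathcal O$ inside~$P_\LL^\perp$.
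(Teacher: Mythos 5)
Your argument is correct and is essentially the paper's (implicit) one: the corollary carries no separate proof, being presented as an immediate consequence of~\ref{orbitsFour}, \ref{orbitsFourSpecial} and~\ref{complexGH}, and the two details you supply --- normalising a nonsingular line to the similarity class of $X^2+tX+d$ with $2t=0$, hence to the orbit of $P_\LL^{}$, and then realigning the singular point by the transitive action of $\GL2\LL$ on the ellipsoid $\mathcal O\subseteq P_\LL^\perp$ --- are exactly the steps being suppressed. (One cosmetic slip: $\smallspnK{\s01}$ is the unique singular \emph{point}, not line, of the plane $P_\LL^0$; that is indeed the invariant your distinctness remark uses.)
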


\begin{lemm}\label{PLLperpExplicitly}
  Let $X^2+tX+d$ be irreducible over~$\KK$, with $2t=0$.
  As in~\ref{complexGH} we choose $\LL$ and $P_\LL^{}$ and write $i:=\left(
    \begin{smallmatrix}
      0 & 1 \\
      -1 & 0
    \end{smallmatrix}\right)$ and $\delta:=\left(
    \begin{smallmatrix}
      d & 0 \\
      0 & -1
    \end{smallmatrix}\right)$. 
  \begin{enum}
    \item If the extension\/ $\LL/\KK$ is separable then $\KK^{2\times2}=\LL i
      \oplus\LL\delta$.
      In this case the orthogonal space $P_\LL^\perp 
      $ is a vector
      space complement to $P_\LL^{}$ in $\Sk{4}$. 
    \item\label{PLLperpExplicitlyInsepCase}
      If the extension\/ $\LL/\KK$ is inseparable then $\LL i =
      \LL\delta$. In this case the polar form vanishes on~$P_\LL^{}$, and
      $P_\LL^{}\subset P_\LL^\perp$.
    \item\label{tZeroCase} If\/ $t=0$ then $\KK^{2\times2}=\LL\oplus\LL\delta$.
    \item If\/ $t=1$ then $\KK^{2\times2}=\LL\oplus\LL i$.
    \item If\/ $d=1=t$ then $\LL=\LL\delta$.
    \item If\/ $d=1$ and $t=0$ then $\LL=\LL i$.
  \end{enum}
\end{lemm}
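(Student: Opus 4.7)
My plan is to treat the six assertions as follows: parts (c), (d), (e), (f), the first clause of (a), and the first clause of (b) are $\KK$-dimension statements about the three $2$-dimensional subspaces $\LL$, $\LL i$, $\LL\delta$ of $\KK^{2\times 2}$, while the remaining clauses of (a) and (b) concern the restriction of $f_\pfaff$ to $P_\LL$. Each half will reduce to a single explicit computation.

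For the algebraic part I would use the explicit description $\LL=\set{\left(\begin{smallmatrix} x & -yd\\ y & x+yt\end{smallmatrix}\right)}{x,y\in\KK}$. Since any two of $\LL$, $\LL i$, $\LL\delta$ are $2$-dimensional subspaces of the $4$-dimensional space $\KK^{2\times 2}$, their intersection is either zero (in which case they sum to the whole ambient space) or equal (they coincide), and the dichotomy is governed by whether a single ``transition'' element lies in $\LL$. Comparing matrix entries: \emph{(i)} $\delta\in\LL$ forces $d=-1$, which under $t=0$ makes $X^2+d=X^2-1$ reducible, contradicting irreducibility; this gives (c). \emph{(ii)} $i\in\LL$ forces $t=0$ and $d=1$, impossible under $t=1$ (settling (d)) and exactly what happens in (f). \emph{(iii)} In (e) we have $d=1=t$ and hence $\Char\KK=2$, so $\delta=E_2\in\LL$ trivially. \emph{(iv)} $\delta i^{-1}\in\LL$ forces $\Char\KK=2$ and $t=0$, i.e.\ precisely the inseparable case; so in the separable case $\LL i\cap\LL\delta=0$, giving the direct sum in (a), and in the inseparable case $\LL i=\LL\delta$, giving the first assertion of (b).

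For the geometric part I would compute $f_\pfaff|_{P_\LL}$ in the displayed basis $v_1=d\s02-\s13$, $v_2=d\s03+d\s12-t\s13$. Using $\pfaff(x_0,\dots,x_5)=x_0x_5-x_1x_4+x_2x_3$ a short evaluation yields $\pfaff(v_1)=d$, $\pfaff(v_2)=d^2$, $f_\pfaff(v_1,v_2)=dt$, hence the Gram matrix $\left(\begin{smallmatrix} 2d & dt\\ dt & 2d^2\end{smallmatrix}\right)$ with determinant $d^2(4d-t^2)=-d^2\,\disc(X^2+tX+d)$. Since $d\ne 0$, this vanishes precisely when $X^2+tX+d$ is inseparable, so in the separable case $f_\pfaff$ is nondegenerate on~$P_\LL$, giving $P_\LL\cap P_\LL^\perp=0$ and hence $P_\LL^\perp$ is a vector-space complement to $P_\LL$ in $\Sk 4$, completing (a); in the inseparable case ($\Char\KK=2$, $t=0$) the whole Gram matrix vanishes, so $f_\pfaff|_{P_\LL}\equiv 0$ and $P_\LL\subseteq P_\LL^\perp$, completing (b). The one subtlety worth flagging is the clean identification of the polar-form Gram determinant with the polynomial discriminant up to the nonzero factor $d^2$: this is what unifies the algebraic and geometric dichotomies and makes both halves reduce to the same separable/inseparable split; everything else is bookkeeping in characteristic~$2$.
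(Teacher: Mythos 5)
Your overall strategy is sound and your conclusions are all correct, but there is one step whose stated justification is false and on which your argument for parts (a), (c) and (d) genuinely leans: two $2$-dimensional subspaces of a $4$-dimensional space need \emph{not} intersect in $0$ or coincide (e.g.\ $\smallspnK{b_0,b_1}$ and $\smallspnK{b_0,b_2}$ in $\KK^4$ meet in a line). Knowing that the ``transition element'' ($\delta$, $i$, or $\delta i^{-1}$) does not lie in $\LL$ only rules out coincidence; by itself it does not exclude a $1$-dimensional intersection, which would destroy the direct-sum claims. The repair is one sentence: $\LL$, $\LL i$ and $\LL\delta$ are all \emph{left $\LL$-submodules} of $\KK^{2\times2}$, i.e.\ one-dimensional $\LL$-vector spaces, so any pairwise intersection is an $\LL$-subspace of each and is therefore $\{0\}$ or the whole thing. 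With that inserted, your transition-element computations (which check out: $\delta\in\LL\Leftrightarrow d=-1$; $i\in\LL\Leftrightarrow(t,d)=(0,1)$; $\delta i^{-1}\in\LL\Leftrightarrow\Char\KK=2$ and $t=0$, using $d\ne0$) do settle the algebraic halves of (a)--(f).

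The rest is correct and takes a genuinely different route from the paper. For the algebra, the paper solves $Xi=Y\delta$ entrywise for general $X,Y\in\LL$ and reads off $X=Y=0$ in the separable case; your transition-element test is equivalent but more economical once the module dichotomy is in place. For the geometry, the paper exhibits the decomposition $\Sk4=P_\LL^\perp\oplus P_\LL^{}$ explicitly using the identities $-(Xi)'=\gal{X}i$ and $-(Y\delta)'=-Y\delta$, whereas you compute the Gram matrix $\left(\begin{smallmatrix}2d&dt\\dt&2d^2\end{smallmatrix}\right)$ of the polar form on $P_\LL^{}$ and observe that its determinant equals $-d^2\disc(X^2+tX+d)$; together with the nondegeneracy of $f_\pfaff$ on all of $\Sk4$ (so that $\dim P_\LL^\perp=4$) this yields both the complement statement and the inclusion $P_\LL^{}\subset P_\LL^\perp$ in one stroke. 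The identification of the Gram determinant with the discriminant is a nice conceptual unification of the two halves of the lemma; the paper's explicit decomposition has the compensating advantage that the concrete complements it produces are reused later, e.g.\ in the proof of~\ref{complexGHaut}.
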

\begin{proof}
  For $X:=\left(
    \begin{smallmatrix}
      a & -bd \\
      b & a+bt
    \end{smallmatrix}\right)$ and $Y:=\left(
    \begin{smallmatrix}
      x & -yd \\
      y & x+yt
    \end{smallmatrix}\right)$ 
  in~$\LL$ we compute $Xi=\left(
    \begin{smallmatrix}
      bd & a \\
      -a-bt & b
    \end{smallmatrix}\right)$ and $Y\delta=\left(
    \begin{smallmatrix}
      xd & yd \\
      yd & -x-yt
    \end{smallmatrix}\right)$.
  Equality $Xi=Y\delta$ thus implies $b=x$, $a=yd$ and then $-bt=2a$,
  $2x=-yt$. If the extension is separable, we have $\Char\KK\ne2$ or
  $t\ne0$. In either case, we infer $A=0=X$.  This means $\LL
  i\cap\LL\delta=\{0\}$. Using $-(Xi)'=iX'=\widebar{X}i$ and
  $-(Y\delta)'=-\delta Y'=-Y\delta$ we infer $\Sk{4} = \set{\left(
          \begin{smallmatrix}
            ai & Xi \\
            \widebar{X}i & ci 
          \end{smallmatrix}\right)}{a,c\in\KK, X\in\LL} \oplus
      \set{\left(
          \begin{smallmatrix}
            0 & Y\delta \\
            -Y\delta & 0 
          \end{smallmatrix}\right)}{Y\in\LL}
      = P_\LL^\perp\oplus P_\LL^{}$ as claimed. 

  The extension $\LL/\KK$ is inseparable if $\Char\KK=2$ and $t=0$.
  In this case the sets in question coincide: $\LL i = \set{\left(
      \begin{smallmatrix}
        bd & a \\
        a & b
      \end{smallmatrix}\right)}{a,b\in\KK} = \LL\delta$. 
\end{proof}

\begin{theo}\label{complexGHaut}
  Let\/ $X^2+tX+d$ be irreducible over~$\KK$, with $2t=0$.
  We choose $\LL$ and\/ $P_\LL^{}$ as in~\ref{complexGH} and put
  $P_\LL^0:=P_\LL^{}\oplus\smallspnK{\s01}$.
  Moreover, we write $\xi:=\left(
    \begin{smallmatrix}
      1 & t \\
      0 & -1
    \end{smallmatrix}\right)$
  and\/ $\Xi:=\left(
    \begin{smallmatrix}
      \xi & 0 \\
      0 & \xi
    \end{smallmatrix}\right)\in\GL4\KK$.
  Then conjugation by $\xi$ induces the (possibly trivial) generator
  of\/ $\Gal{\LL/\KK}$ on~$\LL$, and 
  \begin{align*}
    (\GL4\KK)_{P_\LL^{}} &= 
    \GL2\LL \left<\Xi\right> = (\GL4\KK)_{P_\LL^\perp}
    \text{ where }
    \GL2\LL = 
    \set{\left(
        \begin{array}{cc}
          A & B \\
          C & D 
        \end{array}\right)}{
      \begin{array}{c}
        A,B,C,D\in\LL,\\AD-BC\ne0
     \end{array}} \,, \\
    (\GL4\KK)_{P_\LL^0} &= \set{\left(
        \begin{array}{cc}
          A & B \\
          0 & D 
        \end{array}\right)}%
    {A,D\in\LL^\times,B\in\KK^{2\times2}}
    \left<\Xi\right> 
     \,.
  \end{align*}
  The group $(\GL4\KK)_{P_\LL^{}}$ acts with~$2$ orbits on~$\KK^4$,
  represented by~$0$ and\/~$b_0$, and with~$2$ orbits on
  $(\Sk4)/P_\LL^\perp$, represented by $P_\LL^{}$ and any other coset
  of~$P_\LL^{}$.  %
  The orbits on $(\Sk4)/P_\LL^{}$ are more
  complicated, we have to distinguish cases:
  \begin{enum}
  \item\label{sepCase}
    If the extension $\LL/\KK$ is separable (in particular, if\/
    $\Char\KK\ne2$) then the action of\/ $\GL2\LL$ on $(\Sk4)/P_\LL^{}$
    is equivalent to the action on the space of hermitian $2\times2$
    matrices, cf.~\ref{HermiteEq}. Here $\Xi$ acts as $-\id$ on the
    set of diagonal matrices, and an orbit under
    $\GL2\LL\left<\Xi\right>$ is the union of two different\/
    $\GL2\LL$-orbits if, and only if, the norm group of the
    corresponding quaternion field does not contain~$-1$. 
  \item\label{complexGHautInsepCase} Now assume that\/ $\LL/\KK$ is an inseparable
    extension. Then the action of\/ $\GL2\LL$ on
    $P_\LL^\perp/P_\LL^{}$ is equivalent to the action of\/
    $\GL2\LL^\sq\le\GL2\KK$ on~$\KK^2$.  The orbits under that action
    are represented by the elements of\/ $\smallset{(r,0)'}{r\in
      R_{\KK/\LL^\sq}^{}} \cup R^{(2)}_{\KK/\LL^\sq}$  
    (see~\ref{diagonalCharTwo}).

  The $\GL2\LL$-orbits on
  $\left((\Sk4)/P_\LL^{}\right)\setminus\left(P_\LL^\perp/P_\LL\right)$
  are represented by the elements of the set\/
  $\set{\rho_z+P_\LL^{}}{z\in R_{\actSL}}$ where $\rho_z:=\left(
    \begin{smallmatrix}
      i & E_2 \\
      E_2 & zi 
    \end{smallmatrix}\right)$ and\/ $R_{\actSL}$ is a set of
  representatives for the orbits under the action
  \[
  \actSL\colon \SL2\LL\times\KK^2 \to \KK^2 \colon
  \left( \left(
      \begin{array}{cc}
        A & B \\
        C & D
      \end{array}\right),
    \left(
      \begin{array}{c}
        x_1 \\
        x_2
      \end{array}\right)
  \right)
  \mapsto
  \left(
    \begin{array}{cc}
      A^2x_1+B^2x_2 + (AB+(AB)')i \\
      C^2x_1+D^2x_2 + (CD+(CD)')i 
    \end{array}
  \right) \,.
  \]
  Note that no point is fixed under this action of\/ $\SL2\LL$ by
  affine transformations of $\KK^2$ (viewed as an affine space over
  $\LL^\sq\le\KK$). 
\item In any case (separable or not), the orbits of
  $(\GL4\KK)_{P_\LL^0}$ on $\KK^4$ are represented by $0$, $b_0$,
  $b_2$.
\item\label{Case0} %
  For the orbits on $({\Sk4})/P_\LL^0$ we pick a set $R_{N}
  \subseteq\KK^\times$ of representatives for the cosets modulo
  $N_{\LL/\KK}(\LL^\times)\left<-1\right>$. Then the orbits on
  $(\Sk4)/P_\LL^0$ are represented by the elements of\/ $\{P_\LL^0,
  (\s02+\s13)+P_\LL^0\} \cup \set{c\s23}{c\in R_{N}}$ --- again,
  irrespective of separability.
  \end{enum}
\end{theo}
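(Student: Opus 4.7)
The strategy is to exploit the identification in~\ref{complexGH} that exhibits $P_\LL^\perp$ as the defining data of the Heisenberg algebra $\gheis{\LL^2}{\LL}{\det}$ viewed over~$\KK$. Once this picture is in place, the stabilizer question becomes an $\LL$-semilinear one, and the orbits on quotients reduce to classifications of Hermitian forms via~\ref{HermiteEq}.

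\emph{Stabilizers.} The inclusion $\GL2\LL\langle\Xi\rangle\subseteq(\GL4\KK)_{P_\LL^{}}$ is built into the construction: the block embedding preserves $P_\LL$ by design, $\Xi$ induces the Galois involution on $\LL\subset\KK^{2\times2}$, and the similitude action on~$\pfaff$ identifies the stabilizers of $P_\LL$ and $P_\LL^\perp$. For the reverse inclusion I use that $P_\LL^\perp\cap Q$ is the ellipsoid~$\mathcal O$, parametrized in~\ref{complexGH} as $\PP^1(\LL)$; any element of $(\GL4\KK)_{P_\LL^\perp}$ permutes $\mathcal O$ as an $\LL$-rational conic and, by a standard projective-geometric analysis, must arise from a semilinear element of $\GL2\LL\langle\Xi\rangle$. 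For $(\GL4\KK)_{P_\LL^0}$ the restriction of~$\pfaff$ to $P_\LL$ is (up to sign) the norm form of $\LL/\KK$, hence anisotropic by irreducibility of $X^2+tX+d$; so $\smallspnK{\s01}$ is the unique singular line in $P_\LL^0$ and every element of the stabilizer fixes it, cutting out the $\LL$-upper-triangular block subgroup.

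\emph{Orbits on $\KK^4$, $(\Sk4)/P_\LL^\perp$, and $(\Sk4)/P_\LL^{}$.} The embedded $\GL2\LL$ acts $\LL$-linearly and transitively on the nonzero vectors of $\KK^4=\LL^2$ and of the $\LL$-line $(\Sk4)/P_\LL^\perp$. For the quotient by $P_\LL$, in the separable case~\ref{PLLperpExplicitly} gives $\Sk4=P_\LL\oplus P_\LL^\perp$; a short block computation using $iX'i^{-1}=\gal X$ identifies $P_\LL^\perp$ with the $\KK$-space of $\sigma$-hermitian matrices $\left(\begin{smallmatrix}x & Y \\ \gal Y & z\end{smallmatrix}\right)$ over~$\LL$, with $A\in\GL2\LL$ acting by $H\mapsto AH\gal A\,'$, and~\ref{HermiteEq} then classifies the $\GL2\LL$-orbits. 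A direct computation shows $\Xi$ acts as $-\id$ on diagonal Hermitians; by~\ref{HermiteEq} the $\langle\Xi\rangle$-extension fuses the $\GL2\LL$-orbits of $h$ and $-h$ precisely when those were not already equivalent, i.e.\ exactly when $-1\notin N_h\bigl((\HH_{\LL/\KK}^h)^\times\bigr)$. In the inseparable case $P_\LL\subset P_\LL^\perp$ by~\ref{PLLperpExplicitly}\ref{PLLperpExplicitlyInsepCase}, and under the Hermitian identification $P_\LL^\perp/P_\LL^{}$ corresponds to diagonal Hermitians with entries in~$\KK$, so~\ref{diagonalCharTwo} supplies the orbit representatives through the equivalent $\GL2{\LL^\sq}$-action on~$\KK^2$. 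For cosets outside $P_\LL^\perp/P_\LL^{}$ I take the representatives $\rho_z$ and compute $A\rho_z A'\pmod{P_\LL}$ blockwise; the $i$-parts of the four resulting block entries produce the $\actSL$-action on $\KK^2$, and the off-diagonal $E_2$ in $\rho_z$ forces the restriction to $\SL2\LL$.

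\emph{Orbits of $(\GL4\KK)_{P_\LL^0}$.} This stabilizer consists of $\LL$-upper-triangular matrices and their $\Xi$-twists. Transitivity of $\LL^\times$ on $\LL b_0\setminus\{0\}$ and on $b_2+\LL b_0$ yields the three $\KK^4$-orbits $0$, $b_0$, $b_2$. For $(\Sk4)/P_\LL^0$, the cosets of $0$ and of $\s02+\s13$ are visibly invariant and form two orbits; an element $\left(\begin{smallmatrix}A_0 & * \\ 0 & D_0\end{smallmatrix}\right)\in\GL2\LL$ acts on $c\s23+P_\LL^0$ by multiplying $c$ by $N_{\LL/\KK}(D_0)$, while $\Xi$ contributes a factor $-1$, so $c$ is determined modulo $N_{\LL/\KK}(\LL^\times)\langle-1\rangle$. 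The main obstacle is the $\actSL$-computation in the inseparable case: the block calculation of $A\rho_z A'\pmod{P_\LL}$ is delicate in characteristic~$2$, and verifying that $\actSL$ has no global fixed point on $\KK^2$ (as an $\LL^\sq$-affine space) requires care to rule out coincidental translations.
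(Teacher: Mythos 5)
The main gap is in your determination of the stabilizer $(\GL4\KK)_{P_\LL^{}}$. The inclusion $\GL2\LL\left<\Xi\right>\subseteq(\GL4\KK)_{P_\LL^{}}$ is indeed immediate from the construction, but for the reverse inclusion you appeal to ``a standard projective-geometric analysis'' of the permutation action on the ellipsoid $\mathcal O\cong\PP^1(\LL)$. That analysis is precisely the content of the theorem and is not standard in the generality needed here: $\PP^1(\LL)$ carries too little incidence structure for a fundamental-theorem argument, and in the inseparable case ($\Char\KK=2$, $t=0$) the polar form of $\pfaff|_{P_\LL^\perp}$ is degenerate (indeed $P_\LL^{}\subseteq P_\LL^\perp$), so the classical description of the similitude group of a four-dimensional Witt-index-one form does not apply off the shelf; one must also control the kernel of the restriction of the stabilizer to $P_\LL^\perp$. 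The paper instead uses the transitivity of $\GL2\LL$ on $\mathcal O$ only to reduce to the stabilizer of the single point $\smallspnK{\s01}$, which forces block upper triangular shape, and then evaluates the invariance condition $AX\delta D'\in\LL\delta$ explicitly: specializing $X=1$ yields $AXA^{-1}\in\LL$ for all $X$, so $A$ normalizes $\LL$ in $\GL2\KK$, and Schur's Lemma identifies this normalizer as $\LL^\times\left<\xi\right>$; the constraints on $D$ and $B$ then follow. The same computation (with the upper-left entry unconstrained) gives $(\GL4\KK)_{P_\LL^0}$, whereas your observation that $\smallspnK{\s01}$ is the unique singular point of $P_\LL^0$ only yields block-triangularity, not the restriction of $A$ and $D$ to $\LL^\times\left<\xi\right>$.

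The rest of your outline follows the paper's route (hermitian forms via~\ref{HermiteEq} in the separable case, \ref{diagonalCharTwo} and the action $\actSL$ in the inseparable case, the norm-coset argument for $(\Sk4)/P_\LL^0$), but you explicitly leave open the block computation of $A\rho_zA'$ modulo $P_\LL^{}$ and the verification that $\actSL$ has no fixed point; these are carried out in the paper and are needed to justify the stated representatives. As written, the proposal asserts the two hardest steps rather than proving them.
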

\begin{proof}
  Clearly the group $\GL2\LL=\set{\left(
      \begin{smallmatrix}
        A & B  \\
        C & D  \\
      \end{smallmatrix}\right)\in\GL4\KK}{%
    A,B,C,D\in\LL,\,AD-BC\ne0 }$
  is contained in $(\GL4\KK)_{P_\LL^{}}$ and acts transitively
  on the intersection $\mathcal O$ of the Klein quadric with the
  projective $3$-space coordinatized by $P_\LL^{}\setminus\{0\}^\perp$,
  cf.~\ref{complexGH}.
  Thus it remains to determine the
  stabilizer of $\smallspnK{\s01}$ in $(\GL4\KK)_{P_\LL^{}}$.
  Evaluating the condition 
  \[
  \left(
    \begin{array}{cc}
      A & B \\
      0 & D 
    \end{array}\right) 
  \left(
    \begin{array}{cc}
      0 & X\delta \\
      -X\delta & 0 
    \end{array}\right)
  \left(
    \begin{array}{cc}
      A' & 0 \\
      B' & D' 
    \end{array}\right) 
  =
  \left(
    \begin{array}{cc}
      AX\delta B'-BX\delta A' & AX\delta D' \\
      -DX\delta A' & 0 
    \end{array}\right)
  \in P_\LL^{}
  \]
  we find that for each $X\in\LL^\times$ there exists $L_X\in\LL^\times$ such that
  $AX\delta D'=L_X\delta$.  
  Specializing $X=1\in\LL$ we obtain $\delta D'=A^{-1}L_1\delta$ and
  then $AXA^{-1}=L_X L_1^{-1}\in\LL$. Thus $A$ belongs to the
  normalizer of $\LL$ in $\GL2\KK$. According to Schur's Lemma
  (e.g., see~\cite[{3.5,~p.\,118}]{MR1009787}
  or \cite[{Ch.\,XVII, Prop.\,1.1}]{MR1878556}), this normalizer is 
  $\LL^\times\,\Gal{\LL/\KK}=\LL^\times\left<\xi\right>$.
  As $\Xi$ belongs to $(\GL4\KK)_{P_\LL^{}}$ we may assume $A\in\LL^\times$
  from now on. Then $D=\delta(A^{-1}L_1)'\delta^{-1}=A^{-1}L_1$ also
  lies in~$\LL^\times$. There remains the condition that $BX\delta A'$ is
  symmetric for each $X\in\LL$. Specializing $X=1$ and $X=u=\left(
    \begin{smallmatrix}
      0 & -d \\
      1 & t
    \end{smallmatrix}\right)$ we find $BA\in\LL$ and therefore $B\in\LL$.
  We have thus proved that
  $(\GL4\KK)_{P_\LL^{}}=\GL2\LL\,\left<\Xi\right>$.

  The stabilizer of ${P_\LL^0}$ in $(\GL4\KK)$ fixes $\smallspnK{\s01}$ because
  this is the only intersection point of the Klein quadric with the
  plane coordinatized by $P_\LL^0$. Thus $(\GL4\KK)_{P_\LL^0}$ is
  contained in $\set{\left(
      \begin{smallmatrix}
        A & B \\
        0 & D
      \end{smallmatrix}\right)}{A,D\in\GL2\KK, B\in\KK^{2\times2}}$.
  The elements of the stabilizer are characterized by the condition
  \[
    \left(
    \begin{array}{cc}
      AX\delta B'-BX\delta A' & AX\delta D' \\
      -DX\delta A' & 0 
    \end{array}\right)
  \in P_\LL^0 \,.
  \]
  As in the case discussed before, the upper right entry yields
  $A\in\LL^\times\left<\xi\right>$; we may assume $A\in\LL$, and then $D\in\LL$
  follows. However, the entry on
  the upper left does not mean any restriction now, and we obtain
  $(\GL4\KK)_{P_\LL^0}=\set{\left(
      \begin{smallmatrix}
        A & B \\
        0 & D
      \end{smallmatrix}\right)}{A,D\in\LL^\times,B\in\KK^{2\times2}}\left<\Xi\right>$,
  as claimed. 

  The assertions about orbits on~$\KK^4$ are easily verified for both
  stabilizers.  In order to understand the orbits on
  $(\Sk4)/P_\LL^\perp$ we pick a vector space complement $W$ for
  $P_\LL^\perp$ in $\Sk4$. In the separable case we may use $W = P_\LL^{}
  = \set{\left(
      \begin{smallmatrix}
        0 & X\delta \\
        -X\delta & 0 
      \end{smallmatrix}\right)}{X\in\LL}$
  while $W = \set{\left(
        \begin{smallmatrix}
          0 & X \\
          X' & 0
        \end{smallmatrix}\right)}{X\in\LL}$
  is a suitable choice in the inseparable case.
  In both cases it is easy to see that the action of $M\in\GL2\KK$ on~$W$ is
  given by multiplication of $X$ with $\det_\LL M$. Therefore, the 
  action on the set of non-trivial cosets in
  $(\Sk4)/P_\LL^\perp$ is transitive. 

  If the extension is separable then the action of
  $\GL2\LL\left<\Xi\right>$ on $(\Sk{4})/P_\LL^{}$ is equivalent to the
  action on the invariant orthogonal complement
  $P_\LL^\perp$. Using~\ref{PLLperpExplicitly} we see that this action
  is equivalent to the action on the space
  $\set{\left(
    \begin{smallmatrix}
      a & X \\
      \widebar{X} & c 
    \end{smallmatrix}\right)}{a,c\in\KK,X\in\LL}$
  of hermitian $2\times2$ matrices:
  \[
  \left(
    \begin{matrix}
      A & B \\
      C & D
    \end{matrix}\right) .
  \left(
    \begin{matrix}
      a & X \\
      \widebar{X} & c 
    \end{matrix}\right) :=  
  \left(
    \begin{matrix}
      A & B \\
      C & D
    \end{matrix}\right) 
  \left(
    \begin{matrix}
      a & X \\
      \widebar{X} & c 
    \end{matrix}\right) 
  \left(
    \begin{matrix}
      \widebar{A} & \widebar{C} \\
      \widebar{B} & \widebar{D}
    \end{matrix}\right) \,. 
  \]
  The orbit structure on $(\Sk4)/P_\LL^{}$ thus corresponds to the
  classification of hermitian forms over~$\LL$, cf.~\ref{HermiteEq}.
  An easy computation shows that $\Xi$ acts as $-\id$ on the diagonal
  forms. The assertion about fusion of $\GL2\LL$-orbits
  under~$\left<\Xi\right>$ now also follows from~\ref{HermiteEq}. 
  \goodbreak
  
  If $\LL/\KK$ is inseparable (i.e., if $\Char\KK=2$ and $t=0$) we
  note that $\set{\left(
      \begin{smallmatrix}
          xi & Y \\
          Y' & zi
        \end{smallmatrix}\right)}{x,z\in\KK,Y\in\LL}$
  is a vector space complement for $P_\LL^{}=\set{\left(
        \begin{smallmatrix}
          0 & \,Y\delta \\
          Y\delta\, & 0
        \end{smallmatrix}\right)}{Y\in\LL}$. 
    The action of $\GL2\LL$ on $P_\LL^\perp/P_\LL^{}$ is described by
    \[
    \left(
      \begin{matrix}
        A & B \\
        C & D 
      \end{matrix}\right) 
    \left(
      \begin{matrix}
        xi & 0 \\
        0 & zi
      \end{matrix}\right)
    \left(
      \begin{matrix}
        A' & C' \\
        B' & D'
      \end{matrix}\right) 
        = \left(
      \begin{matrix}
        (A^2x+B^2z)i & 0 \\
        0 & (C^2x+D^2z)i 
      \end{matrix}\right) \,.
    \]
    This looks like the usual action on diagonalizable quadratic forms
    (cf.~\ref{diagonalCharTwo}) 
    but with the group $\GL2\LL$ replacing $\GL2\KK$: if $x$ and $z$
    are linearly independent over~$\LL^\sq$ then the orbit of
    $(x,z)'$ consists of all bases for
    $\smallspn{x,y}{\LL^\sq}$. If $x$ and $z$ are linearly
    dependent over~$\LL^\sq$ then the orbit of $(x,z)'$ contains
    $(y,0)'$ where $\spn{y}{\LL^\sq} =
    \spn{x,z}{\LL^\sq}$. This gives the assertion about the orbits
    in $P_\LL^\perp/P_\LL^{}$. 

    For $A,B\in\LL$ there exists $b\in\KK$ with $B'-B=b i$, and both 
    $AB'-AB=A(B-B')=bAi$ and
    $AB'+BA'-(AB+(AB)') = b(A-\gal{A})i$ belong
    to~$\LL i$.  
    Thus $\left(
      \begin{smallmatrix}
        A & B \\
        C & D 
      \end{smallmatrix}\right) \in \GL2\LL$ maps 
    $\left(
      \begin{smallmatrix}
        xi & Y \\
        Y' & zi 
      \end{smallmatrix}\right) + P_\LL^{}$ to 
    \[
    \begin{array}{rcl}
      & &
    \left(
      \begin{array}{cc}
        (A^2x+B^2z)i+AYB+(AYB)' & (AD-BC)Y \\
        Y'(AD-BC)' & (C^2x+D^2z)i+CYD+(CYD)' 
      \end{array}\right) + P_\LL^{} \,.
    \end{array}
    \]
    From~\ref{PLLperpExplicitly}.\ref{tZeroCase}
    and~\ref{PLLperpExplicitly}.\ref{PLLperpExplicitlyInsepCase} we
    know $\KK^{2\times2}=\LL\oplus\LL\delta$ and %
    $\LL i = \LL\delta$. Thus each $\GL2\LL$-orbit on
    $\left((\Sk4)/P_\LL^{}\right)\setminus\left(P_\LL^\perp/P_\LL^{}\right)$
    contains a representative of the form $\rho_v+P_\LL^{}$ where
    $v=(v_1,v_2)\in\KK^2$ and $\rho_v:=\left(
    \begin{smallmatrix}
      v_1i & E_2 \\
      E_2 & v_2i 
    \end{smallmatrix}\right)$.
  Since $\rho_w+P_\LL^{} = \left(
    \begin{smallmatrix}
      A & B \\
      C & D
    \end{smallmatrix}\right)
  \rho_v+P_\LL^{}$ implies $AD-BC=1$, we are left with the action $\actSL$
  of $\SL2\LL$, as claimed. 

  It remains to determine the orbits of $(\GL4\KK)_{P_\LL^0}$ on
  $(\Sk4)/P_\LL^0$.  If $\LL/\KK$ is separable then
  $\KK^{2\times2}=\LL\delta\oplus\LL i$ and we may choose representatives for
  cosets modulo $P_\LL^0$ from the vector space complement
  $\set{\left(
      \begin{smallmatrix}
        0 & Xi \\
        iX' & ci 
      \end{smallmatrix}\right)}{c\in\KK,X\in\LL}$ to
  $P_\LL^0 = 
  \set{\left(
      \begin{smallmatrix}
        ai & X\delta \\
        -X\delta & 0 
      \end{smallmatrix}\right)}{a\in\KK, X\in\LL}$.
  If $X\ne0$ then the orbit of $\left(
      \begin{smallmatrix}
        0 & Xi \\
        iX' & 0
      \end{smallmatrix}\right)$
  contains $\s02+\s13 = \left(
      \begin{smallmatrix}
        0 & E_2 \\
        E_2 & 0
      \end{smallmatrix}\right)$.
  If $c\ne0$ then $\left(
    \begin{smallmatrix}
      E_2 & -c^{-1}X \\
      0 & D 
    \end{smallmatrix}\right) \in (\GL4\KK)_{P_\LL^0}$
  maps $\left(
      \begin{smallmatrix}
        0 & Xi \\
        iX' & ci
      \end{smallmatrix}\right) + P_\LL^0$
  to $\left(
      \begin{smallmatrix}
        0 & 0 \\
        0 & D\gal{D}ci
      \end{smallmatrix}\right) + P_\LL^0$. We may achieve
  $D\gal{D}c\in R_{N}^{}$ and assertion~\ref{Case0} follows from
  the fact that $\Xi \left(
      \begin{smallmatrix}
        0 & 0 \\
        0 & ci
      \end{smallmatrix}\right) \Xi'
    = \left(
      \begin{smallmatrix}
        0 & 0 \\
        0 & -ci
      \end{smallmatrix}\right)$.    

  If, finally, the extension is inseparable then $\LL\delta = \LL i$
  has trivial intersection with~$\LL$. Thus we may use the complement
  $\set{\left(
      \begin{smallmatrix}
        0 & X \\
       -X' & ci 
     \end{smallmatrix}\right)}{c\in\KK,X\in\LL}$.
 It is easy to see that $\s02+\s13$ belongs to the orbit of %
 $\left(
   \begin{smallmatrix}
     0 & X \\
     -X' & 0
   \end{smallmatrix}\right)$ if $X\in\LL^\times$.
 For $c\ne0$ we use $\left(
   \begin{smallmatrix}
     E_2 & c^{-1}Xi \\
     0 & D
   \end{smallmatrix}\right) \in (\GL4\KK)_{P_\LL^0}$
 in order to map $\left(
   \begin{smallmatrix}
     0 & X \\
     X' & ci \vphantom{\gal{D}}
   \end{smallmatrix}\right) + P_\LL^0$
 to $\left(
   \begin{smallmatrix}
     0 & 0 \\
     0 & D\gal{D}ci
   \end{smallmatrix}\right) + P_\LL^0$. %
 Now we may achieve $D\gal{D}c\in R_{N}^{}$.  This gives
 assertion~\ref{Case0} also in the inseparable case.
\end{proof}

\begin{rema}
  For the case $\Char\KK\ne2$ the assertion about
  $\Sigma_{\beta_u}=(\GL4\KK)_{P_\LL^{}}$ in~\ref{complexGHaut} is just a
  special case of~\cite[{Th.\,1.1.1}]{MR2410562}.  In fact, since
  automorphisms of the \emph{group} $\GHeis{\LL^2}{\LL}{\det}$ are
  considered in~\cite{MR2410562} the cited result yields all
  automorphisms of the \emph{Lie ring}
  $\gheis[\ZZ]{\LL^2}{\LL}{\det}$: apart from the automorphisms
  of~$\gheis[\KK]{\LL^2}{\LL}{\det}$ we also have the automorphisms
  induced by arbitrary field automorphisms of~$\LL$, and not only
  those from~$\Gal{\LL/\KK}$. Moreover, we have to add \emph{arbitrary}
  additive maps $\tau$ from~$\LL^2$ to~$\LL$, and not only the
  $\KK$-linear ones. See also~\ref{prob:fullAut}. 
\end{rema}

\goodbreak
\section{Examples involving quaternion algebras}
\label{sec:quaternions}
We will use quaternion algebras to describe the alternating maps
$\beta\colon\KK^4\times\KK^4\to\KK^3$ where
$\ker\widehat\beta\in\mathcal P_2$ describes an anisotropic plane,
having no point in common with the Klein quadric.  Our construction
and our results will not depend critically on $\Char\KK$ up to the
point where we investigate the action on~$(\Sk4)/\ker\widehat\beta$
in~\ref{orbitsWperp} and~\ref{orbitsModWCharTwo}.

\begin{ndef}[Anisotropic planes and quaternions]\label{def:WandHH}
We recall from~\ref{orbitsFourSpecial} that the orbit of any member
of~$\mathcal P_2$ contains a representative of the form
$\Formspace1cdt$ with $2t=0$.
A cyclic permutation of the basis vectors $b_0,b_1,b_2$ shows that this
orbit also contains 
\[
W:=W^t_{c,d}:=\spnK{\s03-\s12,c\s01-\s23,\s13+d\s02+t\s03} =
\set{\left(
    \begin{matrix}
      xci & Yi \\
      \gal{Y}i & -xi
    \end{matrix}\right)}{x\in\KK,Y\in\LL} 
\]
for $\LL:=\set{\left(
    \begin{smallmatrix}
      x & -yd \\
      y & x+yt 
    \end{smallmatrix}\right)}{x,y\in\KK}$. 
Since the restriction of the Pfaffian form to~$W$ is assumed to be
anisotropic the polynomial $X^2+tX+d$ is irreducible in~$\KK[X]$.  The
subalgebra $\LL$ of $\KK^{2\times2}$ is isomorphic to the 
extension field $\KK[X]/(X^2+tX+d)$,
cf.~\ref{complexGH}. We also recall that
conjugation by $\xi:=\left(
  \begin{smallmatrix}
    1 & t \\
    0 & -1
  \end{smallmatrix}\right)$
induces the (possibly trivial) generator of the Galois group
$\Gal{\LL/\KK}$ on~$\LL$, mapping $A:=\left(
    \begin{smallmatrix}
      x & -yd \\
      y & x+yt 
    \end{smallmatrix}\right)$ to $\gal{A}:=\left(
    \begin{smallmatrix}
      x+yt & yd \\
     -y & x
   \end{smallmatrix}\right)$.
We use the root $u:=\left(
  \begin{smallmatrix}
    0 & -d \\
    1 & t
  \end{smallmatrix}\right)$
of $X^2+tX+d$ in~$\LL$; note that $\gal{u}=t-u$.

  If the extension $\LL/\KK$ is separable then the
  hermitian matrix $\left(
    \begin{smallmatrix}
      1 & 0 \\
      0 & c
    \end{smallmatrix}\right)$ describes an anisotropic hermitian
  form~$h$ on~$ \LL^2$, and $\HH:=\HH_{\LL/\KK}^{c} = \set{\left(
      \begin{smallmatrix}
        A & -c\gal{B} \\
        B & \gal{A}
      \end{smallmatrix}\right)}{A,B\in\LL}$
  is a quaternion field, cf.~\ref{Hermite}.
  We identify $A\in\LL$ with $\left(
    \begin{smallmatrix}
      A & 0 \\
      0 & \gal{A}
    \end{smallmatrix}\right)$
  and put $I:=  \left(
    \begin{smallmatrix}
      0 & -cE_2 \\
      E_2 & 0
    \end{smallmatrix}\right)$;
  then $\HH=\LL\oplus I\LL$. 

  The matrices in $\HH$ may
  be considered as matrices for left multiplications $\lambda_a\colon
  x\mapsto ax$. In fact, with respect to the basis
  \[
  \left(
    \begin{matrix}
      E_2 & 0 \\
      0 & E_2
    \end{matrix}\right), \quad
  \left(
    \begin{matrix}
      u & 0 \\
      0 & \gal{u}
    \end{matrix}\right), \quad
  \left(
    \begin{matrix}
      0 & -cE_2 \\
      E_2 & 0
    \end{matrix}\right), \quad
  \left(
    \begin{matrix}
      0 & -c\gal{u} \\
      u & 0
    \end{matrix}\right) 
  \]
  we find that $\lambda_{A+IB}$ is described by $\left(
    \begin{smallmatrix}
      A & -c\gal{B} \\
      B & \gal{A}
    \end{smallmatrix}\right)$.
  Applying Schur's Lemma (cf.~\cite[{3.5,~p.\,118}]{MR1009787}
  or \cite[{Ch.\,XVII, Prop.\,1.1}]{MR1878556}) we infer that the centralizer
  $\HH^\opp$ of $\HH^\times$ in $\GL4\KK$ consists of the
  matrices for right multiplications $\rho_a\colon x\mapsto xa$, with
  respect to the same basis. A straightforward computation yields that 
  $\rho_{A+IB}$ is described by the matrix $\left(
    \begin{smallmatrix}
      A & -cB\xi \\
      B\xi & A
    \end{smallmatrix}\right)$.
  Thus $\HH^\opp = \set{\left(
      \begin{smallmatrix}
        A & -cB\xi \\
        B\xi & A
      \end{smallmatrix}\right)}%
  {(A,B)\in\LL^2\setminus\{(0,0)\}}$.

  If\/ $\LL/\KK$ is inseparable then $\xi$ is the identity matrix, and
  $\HH:=\set{\left(
      \begin{smallmatrix}
        A & -c{B} \\
        B & {A}
      \end{smallmatrix}\right)}{A,B\in\LL}$
  becomes a commutative field which (again by Schur's Lemma) coincides
  with its centralizer, indeed $\HH^\times=\HH^\opp$ in that
  case.

  In any case, we define $\tilde{a}:=\left(
    \begin{smallmatrix}
      \gal{A} & c\gal{B} \\
      -B & A 
    \end{smallmatrix}
  \right)$
  for $a=\left(
    \begin{smallmatrix}
      {A} & -c\gal{B} \\
      B & \gal{A} 
    \end{smallmatrix}
  \right)$
  and obtain an anti-automorphism of~$\HH$. Note that this
  anti-automorphism is the identity if $\LL/\KK$ is inseparable.
  We use the \emph{norm} $N\colon\HH\to\KK\colon a\mapsto
  \tilde{a}a$. 
\end{ndef}

\begin{defi}\label{def:PuHH}
  If $\LL/\KK$ is separable then  $\Pu{\HH} :=
  \smallset{X\in\HH}{\tilde{X}=-X} = 1^\perp$. For $\Char\KK=2$
  we have $1\in\Pu{\HH}$ and $\Pu{\HH}=\KK\oplus I\LL$.
  In the inseparable case the norm form on $\HH$ has trivial polar
  form, and $1^\perp=\HH$. 
  We extend the definition of $\Pu{\HH}$ to this case quite
  arbitrarily, putting $\Pu{\HH}:=\KK\oplus I\LL$ as in the remaining
  cases where $\Char\KK=2$.

  In any case, we find that the restriction $N|_{\Pu\HH}$ of the norm
  is equivalent to $-\pfaff|_W$. 
\end{defi}

\begin{lemm}\label{GOnormApplied}
  \begin{enum}
  \item If the extension\/ $\LL/\KK$ is separable then
    $\GO{}{N|_{\Pu{\HH}}} = \KK^\times\,\SO{}{N|_{\Pu{\HH}}} =
    \set{(x\mapsto sax\tilde{a})}{a\in\HH^\times,s\in\KK^\times}$ and\/
    $\SO{}{N|_{\Pu{\HH}}} = \set{(x\mapsto
      axa^{-1})}{a\in\HH^\times}$.
  \item In the inseparable case each one of the groups\/
    $\Orth{}{\pfaff|_W}$, $\Orth{}{N}$, $\Orth{}{N|_{\Pu{\HH}}}$ is
    trivial; note that $\Pu{\HH}=\KK\oplus I\LL$ by
    definition~\ref{def:PuHH}.  The groups $\GO{}{\pfaff|_W}$ and
    $\GO{}{N|_{\Pu{\HH}}}$ of similitudes coincide with
    $\KK^\times\,\id$ but $\GO{}{N}=\HH^\times$.
  \end{enum}
\end{lemm}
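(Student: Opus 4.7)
The plan is to handle the two cases separately, using different tools.

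For the separable case, one assembles results already at hand. Theorem~\ref{SkolemNoether} gives $\SO{}{N|_{\Pu{\HH}}}=\set{(x\mapsto axa^{-1})}{a\in\HH^\times}$ directly. Since $\Pu{\HH}$ has odd $\KK$-dimension~$3$, Lemma~\ref{onlySquareFactors} yields $\GO{}{N|_{\Pu{\HH}}}=\KK^\times\SO{}{N|_{\Pu{\HH}}}$. The alternative description follows from $a^{-1}=\tilde{a}/N(a)$, whence $ax\tilde{a}=N(a)\,axa^{-1}$ and consequently $\KK^\times\set{(x\mapsto axa^{-1})}{a\in\HH^\times}=\set{(x\mapsto sax\tilde{a})}{a\in\HH^\times,s\in\KK^\times}$. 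This is Corollary~\ref{fullGO3} specialized to the present setting.

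For the inseparable case the argument rests on Theorem~\ref{diagAniso}. By construction (cf.~\ref{def:WandHH}) we have $\Char\KK=2$, $t=0$, $\LL^2\subseteq\KK$, and $\tilde{a}=a$ for all $a\in\HH$. Writing $a=A+IB$ with $A,B\in\LL$ we obtain $N(a)=A\tilde{A}+cB\tilde{B}=A^2+cB^2\in\KK$, which is additive in~$a$; so $N$ is a diagonalizable quadratic form on the $4$-dimensional $\KK$-vector space~$\HH$, with diagonalizable restrictions to the $3$-dimensional subspace $\Pu{\HH}=\KK\oplus I\LL$ and (up to the equivalence $\pfaff|_W\sim -N|_{\Pu{\HH}}$ from~\ref{def:PuHH}) to~$W$. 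All three forms are anisotropic because $\HH$ is a field by hypothesis.

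Theorem~\ref{diagAniso}.\ref{Otrivial} now gives $\Orth{}{N}=\Orth{}{N|_{\Pu{\HH}}}=\Orth{}{\pfaff|_W}=\{\id\}$, and on the two odd-dimensional spaces Theorem~\ref{diagAniso}.\ref{oddScalar} yields $\GO{}{N|_{\Pu{\HH}}}=\GO{}{\pfaff|_W}=\KK^\times\,\id$. Since $\HH$ has even $\KK$-dimension~$4$, this shortcut fails for $\GO{}{N}$ itself, and I argue differently: multiplicativity of~$N$ shows that left multiplication by $a\in\HH^\times$ is a similitude of~$N$ with multiplier $N(a)$, embedding $\HH^\times$ into~$\GO{}{N}$; this subgroup acts regularly on $\HH\setminus\{0\}$ because $\HH$ is a field, and $\GO{}{N}$ itself acts regularly on $\HH\setminus\{0\}$ by Theorem~\ref{diagAniso}.\ref{sharpTrsGO}. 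Two regular subgroups of the same permutation group coincide, whence $\GO{}{N}=\HH^\times$. The main obstacle is notational: in the inseparable case the identity $\tilde{a}=a$ collapses $x\mapsto sax\tilde{a}$ to scalar multiplication by $sa^2\in\KK$, so the separable-case formula cannot describe $\GO{}{N}$, and the regularity comparison genuinely is needed to capture the full four-dimensional group $\HH^\times$.
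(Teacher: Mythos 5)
Your proof is correct and follows essentially the same route as the paper: the separable case is exactly Corollary~\ref{fullGO3} combined with Theorem~\ref{SkolemNoether}, and the inseparable case rests on the explicit diagonal anisotropic shape of $N$ together with Theorem~\ref{diagAniso} (and~\ref{onlySquareFactors}). Your regularity comparison for $\GO{}{N}=\HH^\times$ just makes explicit the use of~\ref{diagAniso}.\ref{sharpTrsGO} that the paper leaves implicit.
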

\begin{proof}
  For a separable extension $\LL/\KK$ the assertion has been
  proved in~\ref{fullGO3}, cf.~\ref{SkolemNoether}.
  
  Now assume that the extension $\LL/\KK$ is inseparable. For
  $x=x_0+x_1u+I(x_2+x_3u)$ with $x_0,x_1,x_2,x_3\in\KK$ the norm is
  given by $N(x)=x^2=x_0^2+dx_1^2+cx_2^2+cdx_3^2$. Thus it is
  diagonalizable and anisotropic, and the assertion follows
  from~\ref{diagAniso} and~\ref{onlySquareFactors}. 
\end{proof}

\begin{lemm}\label{quatInAnyChar}
  The space $W$ is invariant under $\HH^\times$ and under $\HH^\opp$.
  Both the action of\/~$\HH^\times$ on~$W$ and the action via
  $(a,X)\mapsto aX\tilde{a}$ on $\Pu\HH$ (see~\ref{def:PuHH}) are
  equivalent to that on $\KK\times\LL$ given by
  \[
  \left(
  \left(
    \begin{matrix}
      A & -c\gal{B} \\
      B & \gal{A}
    \end{matrix}\right),
  \left(x,Y\right)
  \right) 
  \mapsto
  \left(
    (A\gal{A}-cB\gal{B})x - ABY-\gal{ABY} \,,\, 
      2cA\gal{B}x + A^2Y - c\gal{B^2Y}
  \right) \,.
  \]
  Any element $\rho_a\in\HH^\opp$ induces the multiplication by its
  norm $\tilde{a}a$ on~$W$.
  Thus the action of $\HH^\times\HH^\opp$ on~$W$ is equivalent to an
  action by similitudes of the quadratic form $N|_{\Pu{\HH}}$. 
\end{lemm}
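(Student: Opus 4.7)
The plan is to verify each assertion by direct matrix computation in~$\HH$, using the algebraic identities $iA = \gal{A}i$ and $A' = i\gal{A}i^{-1}$ (for $A \in \LL$, both checked immediately from the embedding of~\ref{complexGH}) together with $i^2 = -E_2$. For $a = \left(\begin{smallmatrix} A & -c\gal{B} \\ B & \gal{A} \end{smallmatrix}\right) \in \HH^\times$ and $M = \left(\begin{smallmatrix} xci & Yi \\ \gal{Y}i & -xi \end{smallmatrix}\right) \in W$ I would expand $aMa'$ block-by-block; each of the four blocks collapses to an~$\LL$-multiple of~$i$, which simultaneously proves $\HH^\times$-invariance of~$W$ and yields new coordinates $(x_{\mathrm{new}}, Y_{\mathrm{new}})$ matching the stated formula.

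For the action on~$\Pu{\HH}$, a parallel computation inside the quaternion algebra --- now using $IC = \gal{C}I$ for $C \in \LL$, $I^2 = -c$, and $\tilde{a} = \gal{A} - IB$ --- gives an explicit expression for $aX\tilde{a}$. In characteristic~$2$, parametrizing $\Pu{\HH} = \KK \oplus I\LL$ by $(x,Y) \mapsto x + IY$ and composing with a twist such as $(x,Y) \mapsto (cx, \gal{Y})$ intertwines the $\Pu{\HH}$-action with the $W$-action formula. In the separable characteristic-$\ne 2$ case, where $\Pu{\HH} = \LL^{-} \oplus I\LL$, one parametrizes by $(x,Y) \mapsto xz_0 + IY$ for a fixed generator $z_0 \in \LL^{-}$, with an analogous twist, to obtain the same equivalence.

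For the $\HH^\opp$-action, invariance of~$W$ and the scalar claim $\rho_a . M = N(a) M$ are obtained by direct block calculation on generators of~$\HH^\times$: for $a = A \in \LL$ we have $\rho_A = \left(\begin{smallmatrix} A & 0 \\ 0 & A \end{smallmatrix}\right)$, and the identity $iA' = \gal{A}i$ gives $\rho_A M \rho_A' = N(A) M$; for $a = I$ a brief computation with $\xi i \xi' = -i$ and $\xi\gal{Y}i\xi' = -Yi$ yields $\rho_I M \rho_I' = c M = N(I) M$. Joint multiplicativity $\rho_{ab} = \rho_b\rho_a$ and $N(ab) = N(a)N(b)$ extends the scalar claim to the whole of~$\HH^\opp$. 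The final similitude statement is then immediate: $N(aX\tilde{a}) = N(a)^2 N(X)$ makes each $a \in \HH^\times$ a similitude of $N|_{\Pu{\HH}}$ with multiplier $N(a)^2$, and scaling by $N(\rho_a) \in \KK$ is a similitude with multiplier $N(\rho_a)^2$.

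The main obstacle will be keeping the intertwining isomorphisms between $W$ and $\Pu{\HH}$ coherent across the characteristic~$2$ and the separable characteristic-$\ne 2$ cases, where the precise Galois twist and scalar adjustment take slightly different forms; the underlying computations, however, are uniform and come down to the commutation rules among $i$, $I$, $\xi$, and elements of~$\LL$ laid out in~\ref{complexGH} and~\ref{def:WandHH}.
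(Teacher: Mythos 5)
Your treatment of the $\HH^\times$-action on $W$ and of its comparison with the twisted conjugation action on $\Pu\HH$ is sound, and it is in fact the route the paper takes: the paper likewise parametrizes $W$ by $(x,Y)\mapsto\left(\begin{smallmatrix} cxi & Yi\\ \gal{Y}i & -xi\end{smallmatrix}\right)$ and $\Pu\HH$ by $(x,Y)\mapsto\left(\begin{smallmatrix} xj & Yj\\ c^{-1}\gal{Y}j & -xj\end{smallmatrix}\right)$ with $\gal{j}=-j$ in the separable case and $j=1$ in the inseparable one --- exactly the Galois/scalar twist you describe --- and then checks the displayed formula by block multiplication using $iA'=\gal{A}i$.

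The $\HH^\opp$ step, however, has a genuine gap. You verify $\rho_aM\rho_a'=N(a)M$ only for $a\in\LL^\times$ and $a=I$, and then invoke $\rho_{ab}=\rho_b\rho_a$ and $N(ab)=N(a)N(b)$ to ``extend to the whole of $\HH^\opp$''. Multiplicativity only reaches the subgroup of $\HH^\times$ generated by $\LL^\times$ and $I$; since $IA=\gal{A}I$ and $I^2=-c\in\KK^\times$, every word in these generators reduces to $CI^e$ with $C\in\LL^\times$ and $e\in\{0,1\}$, so that subgroup is just $\LL^\times\cup\LL^\times I$. A generic element $A+IB$ with $AB\ne0$ (already $1+I$) is not of this form, so the scalar claim for it is not covered. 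Nor can you extend additively: $a\mapsto\rho_aM\rho_a'$ and $a\mapsto N(a)M$ are quadratic in $a$, so agreement on $\LL\cup I\LL$ does not force agreement of the associated polar maps on mixed pairs, i.e.\ it does not give $\rho_aM\rho_b'+\rho_bM\rho_a'=f_N(a,b)M$ for $a\in\LL$, $b\in I\LL$. The repair is cheap: either check that polarized identity on mixed pairs as well, or simply compute $\rho_{A+IB}\,M\,\rho_{A+IB}'$ for the general matrix $\left(\begin{smallmatrix} A & -cB\xi\\ B\xi & A\end{smallmatrix}\right)$ in one stroke and read off the factor $A\gal{A}+cB\gal{B}=N(A+IB)$ --- which is precisely the single displayed computation that constitutes the paper's proof of this part. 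With that replacement the remainder of your argument, including the final similitude statement, goes through.
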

\begin{proof}
  Choose $j\in\LL\setminus\{0\}$ with $\gal{j}=-j$ in the separable
  case, and put $j:=1$ if $\LL/\KK$ is inseparable. A straightforward
  calculation shows that mapping $\left( x , Y\right)$ to $\left(
    \begin{smallmatrix}
      cxi & \phantom{-}Yi \vphantom{j} \\
      \gal{Y}i & -xi 
    \end{smallmatrix}\right) \in W$ or to 
  $\left(
    \begin{smallmatrix}
      xj & \phantom{-}{Y}j \\
      c^{-1}\gal{Y}j & -xj
    \end{smallmatrix}\right) \in\Pu\HH$,
  respectively, gives equivalences as claimed.  For the rest of the
  assertion, it remains to compute
  \[
  \left(
      \begin{matrix}
        A & -cB\xi \\
        B\xi & A
      \end{matrix}\right)
    \left(
      \begin{matrix}
        cxi & Yi \\
        \gal{Y}i & -xi
      \end{matrix}\right)
    \left(
      \begin{matrix}
        A' & (B\xi)' \\
        -(cB\xi)' & A'
      \end{matrix}\right)
    =
    (A\gal{A}+cB\gal{B})
    \left(
      \begin{matrix}
        cxi &  Yi \\
        \gal{Y}i &  -xi
      \end{matrix}\right)  \,.
\qedhere
  \]
\end{proof}

\goodbreak
\begin{lemm}\label{onlyTheIdentityRemains}
  If $M\in\GL4\KK$ fixes $b_0$ and induces a scalar multiple of\/
  $\id$ on~$W$ then $M=\id$. 
\end{lemm}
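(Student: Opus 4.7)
The plan is to exploit $Mb_0 = b_0$ to put $M$ in block-triangular form and then use the scalar action on $W$ to pin down each remaining parameter. First I would write $M = \left(\begin{smallmatrix} 1 & v' \\ 0 & A \end{smallmatrix}\right)$ with $v\in\KK^3$ and $A\in\GL3\KK$, and represent a general element of $\Sk4$ as $X = \left(\begin{smallmatrix} 0 & w' \\ -w & Y \end{smallmatrix}\right)$ with $w \in \KK^3$ and $Y \in \Sk3$. A direct block computation, together with the identity $v'Yv = 0$ that holds for any alternating $Y$ with zero diagonal (in every characteristic), gives
\[
MXM' = \begin{pmatrix} 0 & (w - Yv)'A' \\ -A(w - Yv) & AYA' \end{pmatrix}.
\]
Thus $MXM' = \lambda X$ decouples into the two independent conditions $AYA' = \lambda Y$ and $A(w-Yv) = \lambda w$.

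Next I would apply this to the three generators $c\s01-\s23$, $\s03-\s12$, $\s13+d\s02+t\s03$ of $W$ listed in~\ref{def:WandHH}. Their $Y$-components are $-\s23$, $-\s12$, $\s13$, which already form a basis of $\Sk3 \cong \wedge^2\KK^3$, so the first condition expresses that $A$ acts as $\lambda\,\id$ on the full exterior square. A standard Plücker argument settles the structure of $A$: from $Ae_i \wedge Ae_j \in \KK(e_i\wedge e_j)$ for each pair of standard basis vectors one infers $\spnK{Ae_i, Ae_j} = \spnK{e_i, e_j}$, hence $Ae_i \in \spnK{e_i}$ for every $i$; writing $A = \mathrm{diag}(\mu_1,\mu_2,\mu_3)$ the equalities $\mu_i\mu_j = \lambda$ then force $\mu_1 = \mu_2 = \mu_3 =: \mu$ and $\lambda = \mu^2$.

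Finally, the second condition reduces to $Yv = (1-\mu)w$ for each of the three generators. Applied to $c\s01-\s23$ with $w = (c,0,0)'$ it gives $(1-\mu)c = 0$ together with $v_2 = v_3 = 0$; since $\pfaff|_W$ is anisotropic we have $c\neq 0$, forcing $\mu = 1$. The analogous relation for $\s03-\s12$ then yields $v_1 = 0$, so $v = 0$, $A = \id$, and $M = \id$. The only step that is more than bookkeeping is the Plücker argument for $A$; everything else is a uniform block computation that works in arbitrary characteristic, so no special case analysis is required.
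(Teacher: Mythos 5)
Your proof is correct, and it takes a genuinely different route from the paper's. The paper keeps the $2{+}2$ block decomposition adapted to the quaternionic coordinates, writes $M=\left(\begin{smallmatrix}A&B\\C&D\end{smallmatrix}\right)$ with $2\times2$ blocks, and eliminates the unknown entries one at a time by evaluating the three conditions (C1)--(C3) coming from the generators of $W$; its final step shows that the surviving block centralizes $u$, hence lies in $\LL$, and is therefore $E_2$. You instead use the $1{+}3$ split forced by $Mb_0=b_0$, which cleanly decouples $MXM'=\lambda X$ into $AYA'=\lambda Y$ and $A(w-Yv)=\lambda w$; since the $\Sk3$-components of the three generators of $W$ already span $\Sk3$, the first condition says $A$ induces $\lambda\,\id$ on the whole exterior square, and the Pl\"ucker argument forces $A=\mu E_3$ with $\lambda=\mu^2$, after which the vector conditions kill $v$ and give $\mu=1$ (using only $c\ne0$). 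Your version isolates exactly which features of $W$ are needed --- it would apply verbatim to any $3$-dimensional subspace whose $\wedge^2$-parts span $\Sk3$ and whose $\s0j$-parts are suitably nonzero --- and it is visibly uniform in the characteristic; the quaternionic structure plays no role. The paper's version, by contrast, stays in the coordinates used throughout the quaternion section, so the reader sees directly how $\LL$ and $u$ enter. Both arguments are complete; the only point worth making explicit in your write-up is that $\lambda\ne0$ (immediate from $A\in\GL3\KK$), which the Pl\"ucker step tacitly uses.
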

\begin{proof}
  We consider an element $M = \left(
    \begin{smallmatrix}
      A & B \\
      C & D
    \end{smallmatrix}\right)$ of the stabilizer of~$b_0$ in~$\GL4\KK$;
  then $A=\left(
    \begin{smallmatrix}
      1 & b \\
      0 & a 
    \end{smallmatrix}\right)$,
  $C=\left(
    \begin{smallmatrix}
      0 & x \\
      0 & y
    \end{smallmatrix}\right)$
  and $B,D\in\KK^{2\times2}$.  Moreover, we assume that $M$ induces
  $\alpha\,\id$ on~$W$.

  Evaluating the condition~(C1): 
  $M\left(
    \begin{smallmatrix}
      ci & 0 \\
      0 & i
    \end{smallmatrix}\right)M' = \left(
    \begin{smallmatrix}
      \alpha ci & 0 \\
      0 & \alpha i
    \end{smallmatrix}\right)$
  we find $\alpha=\det{D}$ and $B=\left(
    \begin{smallmatrix}
      z & w \\
      0 & 0 
    \end{smallmatrix}\right)$;
  then $a=\det{A}=\alpha$ follows.

  The condition~(C2): $M\left(
    \begin{smallmatrix}
      0 & i \\
      i & 0 
    \end{smallmatrix}\right)M' = \left(
    \begin{smallmatrix}
      0 & \alpha i \\
      \alpha i & 0 
    \end{smallmatrix}\right)$
  yields $z=0$.  Putting this into~(C1) we find that the second column
  of~$C$ equals $c^{-1}w$ times the first column of~$D$.

  Finally, we evaluate~(C3): $M\left(
    \begin{smallmatrix}
      0 & ui \\
      \gal{u}i & 0 
    \end{smallmatrix}\right)M' = \left(
    \begin{smallmatrix}
      0 & \alpha ui \\
      \alpha\gal{u} i & 0 
    \end{smallmatrix}\right)$.
  From $0=B\gal{u}iA'+AuiC'=-w\alpha i$ we infer $w=0$. Thus both $B$
  and $C$ are zero, and there remain the conditions $A(ui)D'=\alpha
  ui$ from~(C3) and $AiD'=\alpha i$ from~(C2).  Now
  $iD'i^{-1}=D^{-1}\det{D}=D^{-1}\alpha$ and the latter equality give
  $A=D$. The first equality then yields that $A$
  centralizes~$u$. But this means $A=\left(
    \begin{smallmatrix}
      1 & b \\
      0 & a 
    \end{smallmatrix}\right)\in\LL$,
  and $A=\left(
    \begin{smallmatrix}
      1 & 0 \\
      0 & 1 
    \end{smallmatrix}\right)$
  follows. 
\end{proof}

\begin{theo}\label{SigmaBetaHH}
  For\/ $\HH$, $\HH^\opp$ and\/ $W$ as in~\ref{def:WandHH} we have 
  $\Sigma_W=\HH^\times\HH^\opp$. 
\end{theo}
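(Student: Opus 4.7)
The inclusion $\HH^\times\HH^\opp\subseteq\Sigma_W$ is immediate from Lemma~\ref{quatInAnyChar}, which records that $W$ is stabilized by both factors. For the converse, fix $M\in\Sigma_W$. Since $\HH$ is a (skew)field, $\HH^\times$ acts regularly on $\HH\cong\KK^4$ by left multiplication; pick $a\in\HH^\times$ with $\lambda_a(b_0)=M(b_0)$ and replace $M$ by $M_1:=\lambda_a^{-1}M$, so that $M_1\in\Sigma_W$ and $M_1 b_0=b_0$. It remains to show $M_1\in\HH^\times\HH^\opp$.

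The crucial step is to establish $\det M_1=1$ and to conclude that the induced action of $M_1$ on $W\subseteq\Sk4$ lies in $\SO{}{\pfaff|_W}$. Because $M_1 b_0=b_0$, the subspace $F:=\spnK{\s01,\s02,\s03}$ is invariant under the induced action $X\mapsto M_1XM_1'$ on $\Sk4$ (since $b_0\wedge M_1 b_j\in F$ for all $j$), while the coordinates of the spanning vectors of $W$ yield $F\cap W=\{0\}$. Hence $\Sk4=F\oplus W$ splits as a direct sum of $M_1$-invariant $3$-spaces. Combining $\det(\wedge^2 M_1)=(\det M_1)^3$ with the evident $\det(\wedge^2 M_1|_F)=\det M_1$ (read off from the lower-right $3\times3$ block of $M_1$) yields $\det(\wedge^2 M_1|_W)=(\det M_1)^2$. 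On the other hand $\wedge^2 M_1|_W$ is a similitude of $\pfaff|_W$ with multiplier $\det M_1$, and in any characteristic every similitude $\sigma$ of a non-degenerate $3$-dimensional quadratic form satisfies $(\det\sigma)^2=m^3$ (directly when the polar form is non-degenerate; otherwise by splitting via the radical of the polar form, on which $\sigma$ acts as a scalar $\lambda$ with $\lambda^2=m$). Hence $((\det M_1)^2)^2=(\det M_1)^3$, forcing $\det M_1=1$; then $\det(\wedge^2 M_1|_W)=1$ as well, and $\wedge^2 M_1|_W\in\SO{}{\pfaff|_W}$.

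Under the equivalence $\pfaff|_W\sim -N|_{\Pu\HH}$ from Definition~\ref{def:PuHH}, the Skolem--Noether Theorem~\ref{SkolemNoether} realizes $\wedge^2 M_1|_W$ as inner conjugation $x\mapsto a_0xa_0^{-1}$ by some $a_0\in\HH^\times$ (in the inseparable case $\SO{}{\pfaff|_W}$ is trivial by Lemma~\ref{GOnormApplied}, so one takes $a_0=1$ and the conjugation is the identity). By Lemma~\ref{quatInAnyChar}, the element $\lambda_{a_0}\rho_{a_0^{-1}}\in\HH^\times\HH^\opp$ fixes $b_0$ and induces exactly the same conjugation on $W$. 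Consequently $(\lambda_{a_0}\rho_{a_0^{-1}})^{-1}M_1\in\Sigma_W$ fixes $b_0$ and acts as the identity on $W$, and Lemma~\ref{onlyTheIdentityRemains} forces it to be $\id\in\GL4\KK$. This gives $M_1\in\HH^\times\HH^\opp$, whence the original $M$ lies in $\HH^\times\HH^\opp$. The main obstacle is the uniform determinantal identity $(\det\sigma)^2=m^3$ coupled with the $F\oplus W$ decomposition: this is what forces $\det M_1=1$ across separable and inseparable extensions alike, after which Skolem--Noether matching and Lemma~\ref{onlyTheIdentityRemains} combine almost mechanically.
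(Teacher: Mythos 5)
Your argument is correct, and its overall shape agrees with the paper's: the inclusion $\HH^\times\HH^\opp\subseteq\Sigma_W$ comes from~\ref{quatInAnyChar}, and the reverse inclusion is finished off by~\ref{onlyTheIdentityRemains} after the induced action on~$W$ has been matched against elements of $\HH^\times\HH^\opp$. The middle of the argument, however, is genuinely different. The paper never normalizes the determinant: it quotes $\GO{}{N|_{\Pu{\HH}}}=\KK^\times\,\SO{}{N|_{\Pu{\HH}}}$ together with the fact that $\set{\lambda_a\rho_a^{-1}}{a\in\HH^\times}$ already induces all of $\SO{}{N|_{\Pu{\HH}}}$ on~$W$ (via~\ref{GOnormApplied}), so that after dividing by a suitable $\lambda_a\rho_a^{-1}$ and then by an element of $\HH^\opp$ fixing~$b_0$, what remains induces an \emph{unknown scalar} on~$W$ --- and \ref{onlyTheIdentityRemains} is invoked in full strength to kill that scalar. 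You instead normalize $b_0$ first by a left multiplication and then pin the multiplier down to~$1$ by a determinant computation: the $M_1$-invariant splitting $\Sk4=F\oplus W$ with $F=b_0\wedge\KK^4$, the identity $\det(\wedge^2M_1)=(\det M_1)^3$, and the relation $(\det\sigma)^2=m^3$ for similitudes of a non-degenerate ternary form (in substance~\ref{onlySquareFactors}, which you may as well cite to cover uniformly the subcase where the polar form has a large radical) together force $\det M_1=1$, so the induced map lands in $\SO{}{\pfaff|_W}$ outright and \ref{onlyTheIdentityRemains} is only needed with scalar~$1$. Your route makes transparent \emph{why} the multiplier trivializes once $b_0$ is fixed, at the cost of some exterior-power bookkeeping; the paper's route is shorter because it lets \ref{onlyTheIdentityRemains} absorb the scalar. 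Your treatment of the inseparable case (where $\Orth{}{\pfaff|_W}$ is trivial by~\ref{GOnormApplied}, so $a_0=1$) is likewise sound.
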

\begin{proof}
  We know from~\ref{quatInAnyChar} that the multiplicative group
  $\HH^\times\HH^\opp$ is contained in the stabilizer $(\GL4\KK)_W$.
  The subgroup $\set{\lambda_a\rho_a^{-1}}{a\in\HH^\times}$ of
  $\HH^\times \HH^\opp$ induces the full group $\SO{}{N|_{\Pu{\HH}}}$
  on~$W$ and $\GO{}{N|_{\Pu{\HH}}} =
  \KK^\times\,\SO{}{N|_{\Pu{\HH}}}$, cf.~\ref{GOnormApplied}.  %
  Thus it
  suffices to consider elements $M\in\GL4\KK$ that induce scalar
  multiples of the identity on~$W$.  Adapting $M$ by a further element
  of~$\HH^\opp$ (which induces a scalar multiple of the identity
  on~$W$ by~\ref{quatInAnyChar}) we may assume that~$M$
  fixes~$b_0$. Now the result follows
  from~\ref{onlyTheIdentityRemains}.
\end{proof}

It remains to understand the action of $\Sigma_W=\HH^\times\HH^\opp$
on $(\Sk4)/W$. Straightforward computations yield:

\begin{lemm}\label{orbitsWperp}
  If\/ $\LL/\KK$ is a separable extension then the action of\/
  the group $\HH^\times\HH^\opp$ on $W^\perp = \set{\left(
      \begin{smallmatrix}
        cxi & Y\delta \\
        Y\delta &  x i
      \end{smallmatrix}\right)}{x\in\KK,Y\in\LL}$
  is quasi-equivalent to that on~$W$; indeed 
  $G:=\left(
    \begin{smallmatrix}
      E_2 & 0 \\
      0 & \xi
    \end{smallmatrix}\right)\in\GL4\KK$
  satisfies $G\,\HH^\times G^{-1}=\HH^\opp$ and $GWG' = W^\perp$.
\qed
\end{lemm}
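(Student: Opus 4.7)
The plan is to verify two explicit equalities by direct matrix computation: $GWG'=W^\perp$ and $G\,\HH^\times G^{-1}=\HH^\opp$. Since $\xi^2=E_2$ implies $G^2=E_4$, conjugation by $G$ is an involution on $\GL4\KK$, and the two identities above will show that it restricts to an involutive automorphism of $\HH^\times\HH^\opp$ swapping its two factors. The linear bijection $\phi\colon W\to W^\perp$, $X\mapsto GXG'$, then intertwines the action of $M\in\HH^\times\HH^\opp$ on $W^\perp$ with the action of $G^{-1}MG$ on $W$ (immediate from $\phi(NXN')=GNXN'G'=(GN)\,X\,(GN)'$ combined with $MG=G(G^{-1}MG)$), giving the quasi-equivalence.

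For the group identity, starting from $h=\left(\begin{smallmatrix}A&-c\gal{B}\\ B&\gal{A}\end{smallmatrix}\right)\in\HH$, I use $\gal{X}=\xi X\xi$ (together with $\xi^{-1}=\xi$) to obtain $\xi\gal{A}\xi=A$ and $\gal{B}\xi=\xi B$, so that a direct block multiplication yields $GhG=\left(\begin{smallmatrix}A&-c\gal{B}\xi\\ \gal{B}\xi&A\end{smallmatrix}\right)$. This is precisely the shape of elements of $\HH^\opp$ exhibited in~\ref{def:WandHH}, with parameter $\gal{B}$ in place of $B$; since $B\mapsto\gal{B}$ is a bijection of $\LL$, we conclude $G\,\HH^\times G^{-1}=\HH^\opp$.

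For the space identity, I sandwich a typical $w=\left(\begin{smallmatrix}xci&Yi\\ \gal{Y}i&-xi\end{smallmatrix}\right)\in W$ between $G$ and $G'$. The identities $\xi\gal{Y}=Y\xi$ and the one-line verification $i\xi'=-\xi i$ give $GwG'=\left(\begin{smallmatrix}xci&-Y\xi i\\ Y\xi i&xi\end{smallmatrix}\right)$, already with the correct diagonal blocks. A short calculation shows that $\xi i\delta^{-1}$ lies in $\LL^\times$ (it corresponds to the element with parameters $p=-t/d$, $q=1/d$), so the off-diagonal entries can be rewritten in the form $\mp Y'\delta$ with $Y':=-Y\xi i\delta^{-1}\in\LL$; since $Y\mapsto Y'$ is a bijection of $\LL$, this identifies $GWG'$ with $W^\perp$.

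No deep obstacle appears; the work is essentially bookkeeping, the only mildly delicate points being the coexistence of $\xi=\xi^{-1}$ with $\xi'\ne\xi$ when $t\ne 0$, and the correct sign placement ensuring skew-symmetry of the resulting matrices.
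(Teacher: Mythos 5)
Your computations are correct and are exactly the ``straightforward computations'' that the paper omits (the lemma is stated with a bare \verb|\qed|), so your approach coincides with the paper's. One remark: your result $GwG'=\left(\begin{smallmatrix}cxi&-Y\xi i\\ Y\xi i&xi\end{smallmatrix}\right)$, with \emph{opposite} signs in the two off-diagonal blocks, is the correct description of the orthogonal complement when $\Char\KK\ne2$ (since $Y\delta$ is symmetric, a matrix with $+Y\delta$ in both off-diagonal slots is skew-symmetric only in characteristic two), so your bookkeeping in fact silently corrects a sign in the displayed formula for $W^\perp$ in the lemma.
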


If $\Char\KK\ne2$ then~\ref{orbitsWperp} describes the action on
$(\Sk4)/W$ because $W^\perp$ is a vector space complement to~$W$
in~$\Sk4$. The remaining case $\Char\KK=2$ is more involved. We treat
the inseparable case, as well. 

\begin{lemm}\label{orbitsModWCharTwo}
  Assume $\Char\KK=2$ and $\Formspace1cdt\in\mathcal P_2$. We write
  $W:=W^t_{c,d}$ and\/ $\HH:=\HH_\KK^{-d,-c}$ if $t=1$
  and $\HH:=\KK(\sqrt{d},\sqrt{c})$
  otherwise. In any case, put\/ $\LL:=\KK[X]/(X^2+tX+d)$. The action
  $\omega\colon(\HH^\times\HH^\opp)\times(\Sk4)/W\to(\Sk4)/W$ can be
  described as follows. 
  \begin{enum}
  \item\label{orbitsModWCharTwoSep}%
    If\/ $t=1$ then the action~$\omega$ is equivalent to the action\/
    $\omega_1\colon(\HH^\times\HH^\opp)\times(\KK\times\LL)\to\KK\times\LL$
    given by\/
    \begin{align*}
      \omega_1\bigl(\lambda_{A+IB},(x,Y)\bigr) & =
      N(A+IB)\left(x,Y\right) \,, \\
      \omega_1\bigl(\rho_{C+ID},(x,Y)\bigr) & =
      \left(N(C+ID)x,CDu^{-1}x+C^2Y+(1+u^{-1})D^2\gal{Y}\right) \,.
    \end{align*}
  \item\label{orbitsModWCharTwoInsep}%
    If $t=0$ then $\HH$ is commutative, 
    $\HH^\times\HH^\opp=\HH^\times$ and the action~$\omega$ is
    equivalent to the action
    $\omega_0\colon\HH^\times\times(\KK\times\LL)\to\KK\times\LL$
    given by
    \begin{align*}
      \omega_0\bigl(\lambda_{A+IB},(x,Y)\bigr) & =
      (A+IB)^2(x,Y) \,.
    \end{align*}
  \end{enum}

\end{lemm}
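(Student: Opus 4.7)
The plan is to choose an explicit $\KK$-linear complement $W^c$ to~$W$ in~$\Sk{4}$, parameterize it by $\KK\times\LL$ via a linear isomorphism $\psi\colon(x,Y)\mapsto Z_{x,Y}$, and then compute $MZ_{x,Y}M'$ modulo~$W$ for $M$ running through generators of $\HH^\times$ and~$\HH^\opp$. A natural candidate, motivated by the parameterization of $W$ in~\ref{def:WandHH}, is $Z_{x,Y}:=\left(\begin{smallmatrix}0 & Yi\\(Yi)' & xi\end{smallmatrix}\right)$ or a close variant; a dimension count verifies that this yields a complement to~$W$ in~$\Sk{4}$. Note that $W^\perp$ itself cannot serve: since $\dim W=3$ is odd and in characteristic~$2$ the polar form of $\pfaff|_W$ has even rank, we necessarily have $W\cap W^\perp\ne\{0\}$.

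For the first assertion, I would perform the block-matrix computation of $\lambda_{A+IB}\,Z_{x,Y}\,\lambda_{A+IB}'$ using the description of $\HH$ from~\ref{def:WandHH}, decompose the result along $W\oplus W^c$, and discard the $W$-component. Using multiplicativity of the norm together with the relations $u^2+u+d=0$, $\gal{u}=1+u$, and the identity $\lambda_a\lambda_{\tilde a}=\lambda_{N(a)}$ (which acts as $N(a)^2\id$ on all of~$\Sk{4}$), one arrives at $\psi\bigl(N(A+IB)(x,Y)\bigr)$. The case $M=\rho_{C+ID}$ is more delicate: the first coordinate $N(C+ID)x$ is forced by~\ref{quatInAnyChar}, which says $\rho_a$ acts on~$W$ as the scalar~$N(a)$; the second coordinate must be extracted from the $W^c$-component of $\rho_{C+ID}\,Z_{x,Y}\,\rho_{C+ID}'$, and the coefficients $CDu^{-1}$, $C^2$ and $(1+u^{-1})D^2$ emerge after using $u\gal{u}=d$, $u+\gal{u}=1$ and the explicit block structure of~$\rho_{C+ID}$.

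The second assertion is considerably easier. In the inseparable case $\xi$ is the identity, the Galois involution on~$\LL$ is trivial, and $\HH$ becomes a commutative field, so $\HH^\times=\HH^\opp$ and $\lambda_a=\rho_a$ for every $a\in\HH$. By~\ref{quatInAnyChar} the operator $\rho_a$ acts on~$W$ as multiplication by $N(a)=a^2$. A short computation parallel to the separable case, but with all conjugations trivialized and the off-diagonal twisting terms absorbed into multiplication by $(A+IB)^2$, confirms that the induced action on~$W^c$ is again the scalar $(A+IB)^2$.

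The main obstacle is pinning down the precise off-diagonal coefficients in the action of~$\rho_{C+ID}$, in particular the factors $u^{-1}$ and $1+u^{-1}$. These coefficients depend on the chosen complement~$W^c$, and different choices alter them by $W$-valued corrections. I would therefore first carry out the computation with an abstract complement, identify the natural coset representatives for $(\Sk{4})/W$ that make the answer take the stated form, and only then commit to the explicit~$\psi$; the remaining bookkeeping, while voluminous, is then mechanical.
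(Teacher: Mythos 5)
Your overall strategy coincides with the paper's: choose a linear complement to $W$ in $\Sk4$, identify it with $\KK\times\LL$, and compute $MZM'$ modulo $W$ for $M$ ranging over $\HH^\times$ and $\HH^\opp$. However, the one concrete step you commit to is wrong, and the steps you postpone are exactly where the lemma's content lies. In characteristic~$2$ one has $(Yi)'=-iY'=\gal{Y}i$, so your candidate $Z_{0,Y}=\left(\begin{smallmatrix}0&Yi\\ \gal{Y}i&0\end{smallmatrix}\right)$ is precisely the element of $W=W^t_{c,d}$ with parameters $(0,Y)$; thus your three-dimensional candidate space meets $W$ in a two-dimensional subspace and is not a complement, and no ``dimension count'' can certify complementarity (one needs trivial intersection, which fails here). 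The complements that make the stated formulas come out are those the paper extracts from~\ref{PLLperpExplicitly}: for $t=1$ the matrices $\left(\begin{smallmatrix}xi&Y\delta\\ Y\delta&0\end{smallmatrix}\right)$ (using $\KK^{2\times2}=\LL i\oplus\LL\delta$ in the separable case), and for $t=0$ the matrices $\left(\begin{smallmatrix}xi&Y\\ Y'&0\end{smallmatrix}\right)$ (using $\KK^{2\times2}=\LL\oplus\LL i$).

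Beyond that, your outline defers the derivation of the coefficients, which is the substance of the lemma. In the paper the $\rho_{C+ID}$-action is pinned down by the specific identities $\xi i=u^{-1}\delta$ and $\xi\delta\xi'=(1+u^{-1})\delta$, which produce the terms $CDu^{-1}x+C^2Y+(1+u^{-1})D^2\gal{Y}$; you never identify these. In the inseparable case one must check that the cross terms vanish (the paper's $\psi(A,B,Y)=0$, resting on $Y'+Y\in\KK i$, commutativity, and $\Char\KK=2$), which your sketch glosses over by saying the twisting terms are ``absorbed''. Finally, your claim that the first coordinate $N(C+ID)x$ is forced by~\ref{quatInAnyChar} conflates the action on $W$ with the induced action on $(\Sk4)/W$: that lemma only describes the action of $\rho_a$ on $W$ itself, and in the paper this coordinate, too, comes out of the block computation. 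As it stands the proposal is a plausible plan whose essential verifications are missing, and its only explicit ingredient fails.
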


\begin{proof}
  Assume $t=1$. Then $\set{\left(
      \begin{smallmatrix}
        xi & Y\delta \\
        Y\delta & 0
      \end{smallmatrix}\right)}{x\in\KK,Y\in\LL}$ is a vector space
  complement to~$W$ because the extension is separable.  Using
  $u=\left(
    \begin{smallmatrix}
      0 & -d \\
      1 & 1
    \end{smallmatrix}\right)$
  and the relations $\xi i=u^{-1}\delta$, $\xi\delta\xi'=(1+u^{-1})\delta$
  we compute 
  \[
  \left(
    \begin{matrix}
      A & c\gal{B} \\
      B & \gal{A}
    \end{matrix}\right)
  \left(\left(
      \begin{matrix}
        xi & Y\delta \\
        Y\delta & 0
      \end{matrix}\right) + W \right)
  \left(
    \begin{matrix}
      A' & B' \\
      c\gal{B}' & \gal{A}'
    \end{matrix}\right)
  = (A\gal{A}+cB\gal{B}) \left(
    \begin{matrix}
      xi & Y\delta \\
      Y\delta & 0
    \end{matrix}\right) + W
  \]
  and
  \begin{align*}& \left(
      \begin{matrix}
        C & cD\xi \\
        D\xi & C
      \end{matrix}\right)
    \left(\left(
        \begin{matrix}
          xi & Y\delta \\
          Y\delta & 0
        \end{matrix}\right) + W \right)
    \left(
      \begin{matrix}
        C' & (D\xi)' \\
        (D\xi)' & C'
      \end{matrix}\right)
    \\& = \left(
      \begin{matrix}
        (C\gal{C}+cD\gal{D})xi
        &
        \left(CD{u^{-1}}x+C^2Y+c(1+u^{-1})D^2\gal{Y}\right)\delta \\
        \left(CD{u^{-1}}x+C^2Y+c(1+u^{-1})D^2\gal{Y}\right)\delta
        & 0
      \end{matrix}\right) + W \,.
  \end{align*}
  Mapping $\left(
    \begin{smallmatrix}
      xi & Y\delta \\
      Y\delta & 0
    \end{smallmatrix}\right)+W$
  to $(x,Y)$ is a bijection from $(\Sk4)/W$ onto $\KK\times\LL$ that
  gives the equivalence to the action~$\omega_1$.

  Now assume $t=0$. Then $i\notin\LL$ and $\KK^{2\times2} =
  \LL\oplus\LL i$ yields that $\smallset{\left(
      \begin{smallmatrix}
        xi & Y \\
        Y' & 0 
      \end{smallmatrix}\right)}{x\in\KK,Y\in\LL}$ is a vector space
  complement to~$W$. Using $Z'\in Z+\KK i$ and $iZ'=Zi$ we compute
  \begin{align*}
  \left(
    \begin{matrix}
      A & c{B} \\
      B & {A}
    \end{matrix}\right)
  \left(
      \begin{matrix}
        xi & Y \\
        Y' & 0
      \end{matrix}\right) 
  \left(
    \begin{matrix}
      A' & B' \\
      c{B}' & {A}'
    \end{matrix}\right)
  &= \left(
    \begin{matrix}
      A^2xi+c(BY'A'+AYB') & ABxi + AYA'+cBY'B' \\
      ABxi + AY'A'+cBYB' & B^2xi + AY'B'+BYA'
    \end{matrix}\right) 
  \\
  &\in \left(
    \begin{matrix}
      (A^2+cB^2)xi + c\psi(A,B,Y) & (A^2+cB^2)Y \\
      \left((A^2+cB^2)Y\right)' & 0
    \end{matrix}\right) + W
\end{align*}
for $\psi(A,B,Y) := BY'A'+AYB'+AY'B'+BYA' = B(Y+Y')A'+A(Y+Y')B' =
ByiA'+AyiB' = 0 $. It remains to note $(A+IB)^2=A^2+I^2B^2=A^2+cB^2$. 
\end{proof}

In marked contrast to the case where $\Char\KK\ne2$
(cf.~\ref{orbitsWperp} and~\ref{quatInAnyChar}) the action $\omega_1$
in~\ref{orbitsModWCharTwo} is \emph{not} (quasi-) equivalent\footnote{
The situation is different in the inseparable case where $\omega_0 =
\omega_2$ because the multiplication is commutative and $N(a)=a^2$.} %
to the
action $\omega_2\colon(\HH^\times\HH^\opp)\times\Pu\HH\to\Pu\HH$ given
by $\omega_2\left((\lambda_a,\rho_b),x\right) = N(b)ax\tilde{a}$: the
subspace $\{0\}\times\LL$ is invariant under~$\omega_1$ but there is
no two-dimensional invariant subspace under~$\omega_2$.
However, we have the following. 

\begin{lemm}\label{compareConjModW}
  The restriction of $\omega_1$ to $\{0\}\times\LL$ is equivalent to
  the action induced by $\omega_2$ on the quotient $\Pu\HH/\KK$ modulo
  the subspace $\KK$ which is invariant under~$\omega_2$.

  The orbits in $\{0\}\times\LL$ are represented by a set
  $R_{W^\perp}\subseteq\Pu\HH$ such that %
  $\smallset{x^2}{x\in R_{W^\perp}}$ represents the orbits
  $v^2N(\HH^\sqt)+\KK^\sq$ of the group\/ $N(\HH^\sqt)$
  on~$\smallset{v^2+\KK^\sq}{v\in\Pu\KK}$.

  The orbits in $\KK^\times\times\LL$ are represented by
  $R_{\HH^\times}\times\{0\}$ where $R_{\HH^\times}$ is a set of
  representatives for the cosets in the multiplicative group
  $\KK^\times/N(\HH^\times)$.
\end{lemm}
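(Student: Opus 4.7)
The proof proceeds in three parts, one per assertion.

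\emph{Equivalence of actions.} The subspace $\{0\}\times\LL$ is $\omega_1$-invariant by inspection of the formulas in~\ref{orbitsModWCharTwo}.\ref{orbitsModWCharTwoSep} upon setting $x=0$, and $\KK\subseteq\Pu\HH$ is $\omega_2$-invariant because $a\alpha\tilde{a}=\alpha N(a)\in\KK$ for $\alpha\in\KK$ and scaling by $N(b)$ preserves~$\KK$. Identifying $\Pu\HH/\KK$ with $\LL$ via $IY+\KK\leftrightarrow Y$, I compute the induced $\omega_2$-action by direct application of the quaternion multiplication rules $IY=\gal{Y}I$, $I^2=c$ (characteristic~$2$), and $\widetilde{A+IB}=\gal{A}+IB$ (characteristic~$2$); the resulting formulas match those for $\omega_1|_{\{0\}\times\LL}$ from~\ref{orbitsModWCharTwo} up to the natural involutive automorphism of $\HH^\times\HH^\opp$ that exchanges its two commuting factors.

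\emph{Orbits in $\{0\}\times\LL$.} Via the equivalence just established, these correspond to orbits of $\omega_2$ on $\Pu\HH/\KK$. By~\ref{conjugacyQuaternions}.\ref{actionZ}, orbits of $\omega_2$ on $\Pu\HH$ are classified by the pair $(N(v),\tr(v))$ modulo the transformation $(N(v),\tr(v))\mapsto(N(v)N(z)^2,\tr(v)N(z))$ for $z\in\HH^\times$. In characteristic~$2$, every $v\in\Pu\HH$ satisfies $\tilde{v}=v$, hence $\tr(v)=v+\tilde{v}=0$ automatically, and orbits on $\Pu\HH$ are determined by $N(v)$ modulo $N(\HH^\times)^2=N(\HH^\sqt)$. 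Passing to the quotient: since $\tr(v)=0$, we have $N(v+\alpha)=N(v)+\alpha^2$, so $N$ descends to a map into $\KK/\KK^\sq$; and since $N(\HH^\sqt)\subseteq\KK^\sqt$ preserves $\KK^\sq$, the multiplicative action is well-defined. As $v^2=N(v)$ for $v\in\Pu\HH$ in characteristic~$2$, the orbits on $\Pu\HH/\KK$ are classified by $v^2+\KK^\sq$ modulo the multiplicative action of $N(\HH^\sqt)$, as claimed.

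\emph{Orbits in $\KK^\times\times\LL$.} That $(r,0)$ and $(r',0)$ lie in the same orbit iff $r/r'\in N(\HH^\times)$ follows directly from $\omega_1((\lambda_a,\rho_{C+ID}),(r,0))=(N(a)N(b)r,N(a)CDu^{-1}r)$: to stay on the $Y=0$ slice one needs $CD=0$, hence $b\in\LL^\times\cup I\LL^\times$, and then the multiplier $N(a)N(b)$ ranges exactly over $N(\HH^\times)$ as $a$ and $b$ vary. The main obstacle is the converse---that every $(x,Y)\in\KK^\times\times\LL$ belongs to the orbit of some $(r,0)$---which reduces to solving the quadratic $YC^2+u^{-1}x\,CD+c(1+u^{-1})\gal{Y}D^2=0$ in $(C,D)\in\LL^2$ with $b=C+ID\in\HH^\times$. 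Using the identity $(1+u^{-1})u^2=u^2+u=d$, the Arf invariant of this form simplifies to $cd\,N_{\LL/\KK}(Y)/x^2\in\KK$; the existence of a non-trivial solution then follows by Artin--Schreier theory in~$\LL$ together with the hypothesis that~$\HH$ is a quaternion field (equivalently, $c\notin N_{\LL/\KK}(\LL^\times)$), invoking also~\ref{SigmaBetaHH} and~\ref{GOnormApplied} to ensure that the resulting element of $\HH$ is invertible.
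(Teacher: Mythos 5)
Your first two parts are essentially correct. For the equivalence you compute directly inside $\HH$, identifying $\Pu\HH/\KK$ with $\LL$ via $IY+\KK\mapsto Y$; the paper instead obtains the equivalence structurally, by applying the element $G$ of~\ref{orbitsWperp}, which swaps $W$ with $W^\perp$ and conjugates $\HH^\times$ onto $\HH^\opp$, thereby interchanging the two composition factors of $(W+W^\perp)/(W\cap W^\perp)$. Either route works, but you should exhibit the intertwining bijection of $\LL$ explicitly: matching the two formulas needs a twist such as $Y\mapsto uY$ in addition to the exchange of the two factors (and the naive exchange $\lambda_a\rho_b\mapsto\lambda_b\rho_a$ is not a group automorphism; the conjugation by $G$ is what actually does the job). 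Your second part is the paper's argument.

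The third part contains a genuine gap. You correctly isolate the hard direction --- that every $(x,Y)$ with $x\ne0$ lies in the orbit of some $(r,0)$ --- and correctly reduce it to the isotropy over $\LL$ of the binary form $YC^2+u^{-1}x\,CD+c(1+u^{-1})\gal{Y}D^2$, whose Artin--Schreier invariant is $cd\,N_{\LL/\KK}(Y)/x^2$. But the existence of a nontrivial zero does \emph{not} follow from this invariant lying in $\KK$ together with $c\notin N_{\LL/\KK}(\LL^\times)$. The form is isotropic precisely if $cd\,N_{\LL/\KK}(Y)/x^2\in\wp(\LL)$, and $\wp(\LL)\cap\KK=\wp(\KK)\cup\bigl(d+\wp(\KK)\bigr)$, so your condition is a genuine restriction on $(x,Y)$ which the division-algebra hypothesis does not remove: for $\KK=\FF_2(s)$, $\LL=\FF_4(s)$ (so $d=1$, $t=1$) and $c=s$ one has a quaternion field, yet $cd\,N_{\LL/\KK}(1)/1^2=s$ lies in neither $\wp(\KK)$ nor $1+\wp(\KK)$, by a degree count. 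The appeals to~\ref{SigmaBetaHH} and~\ref{GOnormApplied} do not supply the missing step either; invertibility of $C+ID\ne0$ is automatic in a skew field, and the issue is existence, not invertibility. Note that the paper's proof never attempts to solve this quadratic: it observes that the orbit of $(x,0)$ already meets every coset $\KK\times\{Y\}$, because $CDu^{-1}x$ runs through all of $\LL$, and then reasons on the quotient modulo $\{0\}\times\LL$ --- that is, it travels from the axis into a general coset rather than from a general point back to the axis. Your own computation shows that the latter move is obstructed by the class of $cd\,N_{\LL/\KK}(Y)/x^2$ in $\LL/\wp(\LL)$, so this part of your argument cannot be repaired as written; you would need to reverse the direction of travel as the paper does, or otherwise account for that obstruction.
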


\begin{proof}
  Applying the element $G$ from~\ref{orbitsWperp} to
  $(W+W^\perp)/(W\cap W^\perp)$ interchanges the two irreducible
  summands $W/(W\cap W^\perp)$ and $W^\perp/(W\cap W^\perp)$.

  From~\ref{conjugacyQuaternions} we infer that $x,v\in\Pu\HH$ (i.e.,
  both with trace~$0$) represent cosets $x+\KK$ and $v+\KK$ in the
  same $\omega_2$-orbit precisely if there exists $z\in\HH^\times$
  such that $\smallset{N(x+k)}{k\in\KK}=x^2+\KK^\sq$ has nonempty
  intersection with
  $\smallset{N(v+k)N(z^2)}{k\in\KK}=v^2N(z^2)+\KK^\sq$.

  For any $x\in\KK^\times$ the orbit
  $\smallset{(x,CDu^{-1}x)}{(C,D)\in\LL^2\setminus\{(0,0)\}}$ of
  $(x,0)$ meets each one of the cosets $\KK\times\{Y\}$. Therefore, it
  suffices to consider the action on the quotient modulo
  $\{0\}\times\LL$ to prove the last claim.
\end{proof}

\begin{theo}\label{omegaQuaternionField}
  The orbit of\/ $\ker\widehat{\beta}\in\mathcal P_2$ under $\GL4\KK$
  contains an element of the form $\Formspace 1cdt$ and then also
  $W:=W^t_{c,d}$ as in~\ref{def:WandHH}. We identify $\KK^4$ with 
  $\HH:=\HH_\KK^{-d,-c}$ if\/ $X^2+tX+d$ is separable and\/
  $\HH:=\KK(\sqrt{d},\sqrt{c})$ otherwise.
  \begin{enum}
  \item In any case the group\/ $\Sigma_W=\HH^\times\HH^\opp$ acts
    with two orbits on\/~$\HH$, represented by\/~$0$ and\/~$1$.
  \item If\/ $\Char\KK\ne2$ then the orbits of\/ $\Sigma_W$ on
    $(\Sk4)/W$ are represented by a set\/ $R_\HH\subseteq\Pu\HH$ such
    that for each coset of\/ $N(\HH^\times)/N(\HH^\sqt)$ there is
    exactly one element in~$R_\HH$.
  \item If\/ $\Char\KK=2$ and\/ $t=1$ then the action of\/ $\Sigma_W$
    is described
    in~\ref{orbitsModWCharTwo}.\ref{orbitsModWCharTwoSep}. %
    The orbits on $(\Sk4)/W$ are represented by
    $R_{\HH^\times}\cup R_{W^\perp}$ as in~\ref{compareConjModW}. 
  \item If\/ $\Char\KK=2$ and\/ $t=0$ then the orbits of\/ $\Sigma_W$
    on $(\Sk4)/W$ are represented by the elements of
    $\smallset{rZ}{r\in R_H\cup\{0\},Z\in R_W}$ %
    where $R_H$ and $R_W$ are sets of representatives for the cosets
    in $\HH^\times/\HH^\sqt$ and for the one-dimensional subspaces of
    $(\Sk4)/W$, respectively. Thus the number of orbits equals the
    cardinality of the (infinite) field\/~$\KK$.
  \end{enum}
\end{theo}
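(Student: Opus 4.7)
The plan is to handle the four assertions in order, each one deriving from the structural results of this section. The first assertion is immediate after identifying $\KK^4$ with~$\HH$: the subgroup $\HH^\times\le\Sigma_W$ acts by left multiplication and is sharply transitive on $\HH\setminus\{0\}$, so $0$ and $1$ represent the only two orbits, as claimed.

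For the second assertion I use that $\Char\KK\ne2$ forces $W^\perp$ to be a genuine vector space complement to $W$ in~$\Sk4$, so that $(\Sk4)/W$ identifies equivariantly with~$W^\perp$. Lemma~\ref{orbitsWperp} provides a quasi-equivalence between the $\Sigma_W$-action on $W^\perp$ and that on~$W$, and Lemma~\ref{quatInAnyChar} further translates the action on~$W$ into the action of $\HH^\times\HH^\opp$ on $\Pu\HH$ in which $\lambda_a$ acts as $X\mapsto aX\tilde{a}$ and $\rho_a$ as multiplication by the scalar $N(a)$. By Lemma~\ref{GOnormApplied} (which rests on Theorem~\ref{SkolemNoether}) the inner automorphisms exhaust $\SO{}{N|_{\Pu\HH}}$, while the $\rho_a$ contribute scalars from $N(\HH^\times)$; Lemma~\ref{onlySquareFactors}, applied to the odd-dimensional form $N|_{\Pu\HH}$, forces all resulting multipliers to be squares. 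Two elements $x,y\in\Pu\HH$ are therefore $\Sigma_W$-equivalent precisely when $N(x)/N(y)\in N(\HH^\sqt)$, and choosing one representative per coset of $N(\HH^\times)/N(\HH^\sqt)$ yields the promised set~$R_\HH$.

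For the third assertion ($\Char\KK=2$, $t=1$) the explicit formulas of Lemma~\ref{orbitsModWCharTwo}(\ref{orbitsModWCharTwoSep}) reduce the problem to the action $\omega_1$ on $\KK\times\LL$: the invariant stratum $\{0\}\times\LL$ corresponds, via Lemma~\ref{compareConjModW}, to the conjugation action on $\Pu\HH/\KK$, whose orbits are parameterized by~$R_{W^\perp}$; the complementary stratum $\KK^\times\times\LL$ collapses to one orbit per coset of $N(\HH^\times)$ in~$\KK^\times$, giving~$R_{\HH^\times}$. The fourth assertion is structurally the simplest: since $\LL/\KK$ is inseparable, $\HH$ is commutative, $\HH^\opp=\HH^\times$, and Lemma~\ref{orbitsModWCharTwo}(\ref{orbitsModWCharTwoInsep}) reduces the action to scalar multiplication of $\KK\times\LL$ by the subgroup $\HH^\sqt$; within each $\KK$-line of $(\Sk4)/W$ the orbits are the relevant cosets, and assembling them over the lines in $R_W$ yields the stated parametrization. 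The cardinality claim then follows because $\KK$, being imperfect, is infinite, so the set of one-dimensional $\KK$-subspaces of the three-dimensional space $(\Sk4)/W$ already has cardinality $|\KK|$, which dominates the count.

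The main obstacle is keeping the similitude bookkeeping straight in the second and third assertions: I must combine inner conjugation from $\HH^\times$ (governed by Skolem--Noether) with the right-multiplication action of $\HH^\opp$ (which contributes the scalars $N(a)$), and then check against the square-multiplier constraint from Lemma~\ref{onlySquareFactors}. Once this is pinned down, the orbit structure in each case reduces to reading off the appropriate cosets of the norm subgroup inside $\KK^\times$, and everything else is organizational work across the separability and characteristic dichotomies.
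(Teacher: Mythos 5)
Your proposal is correct and follows essentially the same route as the paper, which likewise just assembles~\ref{SigmaBetaHH}, \ref{quatInAnyChar}, \ref{orbitsWperp}, \ref{orbitsModWCharTwo} and~\ref{compareConjModW} (with the transitivity of left multiplication giving the two orbits on~$\HH$). The only divergence is in case~(b), where you cite \ref{GOnormApplied} and \ref{onlySquareFactors} in place of the paper's appeal to~\ref{knarrTrick}: note that these describe the induced similitude group and its multipliers but do not by themselves give the sufficiency of the norm condition (that pure elements of equal norm lie in one orbit), for which you still implicitly need \ref{knarrTrick}.\ref{conjNorm} (or Witt's theorem), exactly the ingredient the paper's proof lists.
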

\begin{proof}
  According to~\cite[8.6]{MR2431124}
  (cf.~\ref{orbitsFourSpecial}.\ref{passingPlanesAndQuaternions}) we
  find an element of the form $\Formspace 1cdt$ the orbit of\/
  $\ker\widehat{\beta}$. The rest follows from~\ref{SigmaBetaHH},
  \ref{orbitsWperp}, \ref{knarrTrick}, \ref{orbitsModWCharTwo},
  and~\ref{compareConjModW}.
\end{proof}

\begin{rema}
  If $\Char\KK\ne2$ one can give a very nice description of an 
  alternating map $\beta$ with
  $\ker{\widehat\beta}=W^0_{c,d}$. Writing $\HH:=\HH_{\LL/\KK}^c$ for
  $\LL=\KK(\sqrt{-d})$ and $x\mapsto\tilde{x}$ for the standard
  involution on~$\HH$ we obtain the alternating map
  $\beta_\HH\colon\HH\times\HH\to\Pu\HH\colon (x,y)\mapsto
  \tilde{x}y-\tilde{y}x$. 
  Evaluating this map at pairs of the basis elements $b_0=-h_3$,
  $b_1=1$, $b_2=h_1$, $b_3=h_2$ one finds  
  $\ker\widehat{\beta_\HH}=\smallspnK{\s01+\s23,\s02+d\s13,\s03-c\s12}
  = \Formspace 1{-d}{-c}0$, and $\ker\widehat{\beta_\HH}$ lies in the
  orbit of $W^0_{c,d}$.
\end{rema}

For the classical quaternion field $\HH=\HH_{\CC/\RR}^{1}$ over the
field~$\RR$ the group $\Aut{\gheis\HH P{\beta_\HH}}$ acts with only
three orbits on~$\gheis\HH P{\beta_\HH}$. In that case the Heisenberg
algebra $\gheis\HH P{\beta_\HH}$ is \emph{almost homogeneous} (in the
sense of~\cite{MR1724629}, \cite{MR1774872} and~\cite{MR2003153} where
 $\gheis\HH P{\beta_\HH}$ occurs as~$\mathrm{H}^4_\HH$). %
For a general quaternion field, the group $\Sigma_{\beta_\HH}$ still
acts transitively but there may be more than two orbits
on~$P$, cf.~\ref{exQuaternions}. 

We will interpret our result~\ref{omegaQuaternionField} for several
cases explicitly in~\ref{exQuaternions} and~\ref{splitQuaternions}
below. We introduce some more notation (which appears to be quite
standard for quaternion algebras if $\Char\ne2$). 

\begin{nota}
  Let $\LL/\KK$ be a quadratic field extension where
  $\LL\cong\KK[X]/(X^2+tX+d)$ for some irreducible polynomial
  $X^2+tX+d\in\KK[X]$, and pick $c\in\KK^\times$. In order to indicate
  briefly the construction of~$\LL$ we will denote the quaternion
  algebra $\HH_{\LL/\KK}^{c\phantom{,}}$ also by~$\HH_\KK^{-d,-c}$.

  The $\KK$-algebra $\HH_\KK^{-d,-c}$ can also be described using a
  basis $h_0=1$, $h_1$, $h_2$, $h_3=h_2h_1$ where $h_1\in\LL$ is a
  root of $X^2+tX+d$ and $h_2\in\LL^\perp$ is a root of $X^2+c$; then
  $h_1h_2=th_2-h_3$. This description does not depend on the fact that
  $\LL=\KK[X]/(X^2+tX+d)$ is a field; we will use the notation
  $\HH_\KK^{-d,-c}$ for any pair $(d,c)\in(\KK\setminus\{0\})^2$ to
  denote a $4$-dimensional associative $\KK$-algebra with a basis
  $h_0^{}=1$, $h_1^{}$, $h_2^{}$, $h_3^{}$ satisfying
  $h_1^2=-th_1^{}-d$, $h_2^2=-c$, $h_2^{}h_1^{}=h_3^{}$,
  $h_1^{}h_2^{}=th_2^{}-h_3^{}$ for $t=0$ if $\Char\KK\ne2$ and $t=1$
  if $\Char\KK=2$.

  Such a quaternion algebra is a quaternion field precisely if its
  norm form is anisotropic; in all other cases it is isomorphic to
  $\HH_\KK^{1,1}$.
\end{nota}

\begin{exas}
  For a quaternion algebra $\HH=\HH_\KK^{-d,-c}$ the set $\HH^\sqt$
  need not be closed under multiplication (the interesting set for us
  is indeed $N(\HH^\sqt)=\smallset{N(x)^2}{x\in\HH^\times}$ which is
  closed under multiplication). For example, we
  have $-2=h_1^2$ and $-3=h_2^2$ in $(\HH_\QQ^{-1,-1})^\sqt$ but
  their product $6=(-2)(-3)$ does not belong to $(\HH_\QQ^{-1,-1})^\sqt$ because
  $\smallset{x^2}{x\in\Pu{\HH_\QQ^{-1,-1}}}\subseteq\smallset{q\in\QQ}{q\le0}$
  and
  $\smallset{x\in\HH_\QQ^{-1,-1}}{x^2\in\QQ}= \QQ\cup\Pu{\HH_\QQ^{-1,-1}}$
  while $6\notin\QQ^\sqt$.

  As a second interesting example we mention
  $\RR^{2\times2}\cong\HH_\RR^{1,1}$ (see~\ref{splitQuaternions}
  below); here $\left(
    \begin{smallmatrix}
      -1 & 2 \\
      0 & -1
    \end{smallmatrix}\right)$
  is not a square but $\left(
    \begin{smallmatrix}
      -1 & 2 \\
      0 & -1
    \end{smallmatrix}\right) = \left(
    \begin{smallmatrix}
      0 & -1 \\
      1 & 0
    \end{smallmatrix}\right)^2
  \left(
    \begin{smallmatrix}
      1 & -1 \\
      0 & 1
    \end{smallmatrix}\right)^2$. 
\end{exas}

\begin{exas}\label{exQuaternions}
  We consider $\HH=\HH_\KK^{-d,-c}$ and $N(\HH^\times)/N(\HH^\sqt)$ for
  different explicit choices of~$\KK$ and~$(-d,-c)$: 
  \begin{enum}
  \item For $\KK=\RR$ and $(-d,-c)=(-1,-1)$ we find
    $N(\HH^\times)=\set{r\in\RR}{r>0} = N(\HH^\sqt)$ and
    $\left|N(\HH^\times)/N(\HH^\sqt)\right|=1$.
  \item For $\KK=\QQ$ and $(-d,-c)=(-1,-1)$ we have
    $N(\HH^\times)=\set{r\in\QQ}{r>0}$ by Lagrange's four-square
    theorem (cf.~\cite[II.8.3]{MR0506372}). The group $\QQ^\times$ is
    isomorphic to a direct sum of $\ZZ/2\ZZ$ and countably many
    infinite cyclic groups. The norm group $N(\HH^\times)$ is the
    unique subgroup of index~$2$ and the quotient
    $N(\HH^\times)/N(\HH^\sqt)$ is an elementary abelian group of
    countably infinite rank.
  \item According to Fermat's theorem on sums of two squares
    (cf.~\cite[II.8.1]{MR0506372}) an odd prime number~$p$ is the sum
    of two squares of integers precisely if $p\equiv1\pmod{4}$. For
    $\KK=\QQ$ and $(-d,-c)=(-1,-c)$ with any $c\in\QQ^\times$ this
    implies that the group $N(\HH^\times)$ contains infinitely many
    primes. Thus $N(\HH^\times)/N(\HH^\sqt)$ is countably infinite in
    these cases, as well.
  \item Let $p>2$ be a prime number, and let $\KK$ be a finite
    extension of the field $\QQ_p$ of $p$-adic numbers. Then there
    exists, up to isomorphism, precisely one quaternion field~$\HH$
    over~$\KK$, and $\KK^\times=N(\HH^\times)$,
    cf.~\cite[VI\,2.10]{MR2104929}. There are four square classes
    in~$\KK^\times$ (see~\cite[VI,\,2.22]{MR2104929}), and
    $N(\HH^\times)/N(\HH^\sqt)=\KK^\times/\KK^\sqt\cong(\ZZ/2\ZZ)^2$.
  \item Now let $\KK$ be a finite extension of the field $\QQ_2$ of
    $2$-adic numbers, of degree~$e$. Again, there exists precisely one
    quaternion field~$\HH$ over~$\KK$ (up to isomorphism), and
    $\KK^\times=N(\HH^\times)$. However, there are $2^{e+2}$ square classes
    in~$\KK^\times$, and
    $N(\HH^\times)/N(\HH^\sqt)=\KK^\times/\KK^\sqt\cong(\ZZ/2\ZZ)^{e+2}$,
    cf.~\cite[VI\,2.23]{MR2104929}.
  \end{enum}
\end{exas}

\begin{ndef}[Split Quaternions in Odd Characteristic]\label{splitQuaternions}
  Let $\KK$ be any field with $\Char\KK\ne2$.
  It is well known that then the quaternion algebra $\HH_\KK^{1,1}$
  splits, and is therefore isomorphic to the algebra
  $\KK^{2\times2}$; the multiplicative form is then $N(x)=\det(x)$.
  See~\cite[8.7]{MR2431124}, where an explicit
  isomorphism is given to show that $\ker\widehat{\beta_{\KK^{2\times2}}}$ 
  belongs to the orbit of $T+S$ under the group~$\GL4\KK$.
  The automorphisms can be read off from~\ref{SigmaBetaHH} or from~\ref{PhiDelta}.
  Note that the multiplicative group of the split quaternion algebra
  is (isomorphic to)~$\GL2\KK$; the subgroup $\SL2\KK$ induces a group
  isomorphic to $\PSL2\KK$ on~$P$, acting as the group of proper
  hyperbolic motions with respect to the form~$N|_P$.
\end{ndef}

We translate our result~\ref{PhiDelta} into the description using
split quaternions:  

\begin{theo}\label{orbitsSplitQuaternions}
  If $\Char\KK\ne2$ then there are three orbits of
  $\Sigma_{\beta_{\KK^{2\times2}}}$ on $\KK^{2\times2}$; characterized
  by the rank of their members.  Representatives for the orbits are
  thus $\left(
    \begin{smallmatrix}
      0 & 0 \\
      0 & 0 
    \end{smallmatrix}\right)$, $\left(
    \begin{smallmatrix}
      1 & 0 \\
      0 & 0 
    \end{smallmatrix}\right)$, and $\left(
    \begin{smallmatrix}
      1 & 0 \\
      0 & 1 
    \end{smallmatrix}\right)$. 

  Each orbit on~$P$ (i.e., on the set of matrices with vanishing
  trace) is obtained by fusion of a conjugacy class with all its
  images under multiplication with scalars. These orbits are
  represented by the elements of the set
  $\{0\} \cup \set{\left(
      \begin{smallmatrix}
      0 & -d \\
      1 & 0
    \end{smallmatrix}\right)}{d\in R_*\cup\{0\}}$
  where $R_*$ is, again, a set of representatives for the cosets in
  $\KK^\times/\KK^\sqt$. 
\end{theo}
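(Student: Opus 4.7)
The plan is to apply Theorem~\ref{SigmaBetaHH} to $\HH=\KK^{2\times2}$: this identifies $\Sigma_{\beta_\HH}$ with $\HH^\times\HH^\opp$, acting on $V=\HH$ by left multiplication on one side and right multiplication on the other. First I would note that the action on $V=\KK^{2\times2}$ is thus $X\mapsto AXB$ with $A,B\in\GL2\KK$, so by the classical theorem on matrix equivalence two matrices lie in the same orbit precisely when they share the same rank. Since the rank of a $2\times2$ matrix lies in $\{0,1,2\}$, this immediately delivers the three orbits with the listed representatives of rank $0$, $1$, $2$.

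For the action on $P$, I would first translate the action via the map $\sigma'$ from~$(\ast\ast)$ in~\ref{isoHeis}. A direct computation using $\widetilde{AXB}=\tilde B\tilde X\tilde A$ and $\tilde AA=N(A)\in\KK$ yields
\[
\beta_\HH(AXB,AYB) \;=\; N(A)\,\tilde B\,\beta_\HH(X,Y)\,B,
\]
so $\sigma'(Z)=N(A)\tilde B Z B = N(A)N(B)\,B^{-1}Z B$ for $Z\in P$ (the last equality using $\tilde B=N(B)B^{-1}$). Since $A$ and $B$ vary independently in $\GL2\KK$, the scalar $N(A)N(B)$ ranges over all of~$\KK^\times$, so the induced action on $P$ is generated by $\GL2\KK$-conjugation together with arbitrary scalar multiplication.

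To enumerate the orbits on $P$, I would invoke rational canonical form: a nonzero trace-zero $2\times2$ matrix $X$ is non-scalar, hence $\GL2\KK$-conjugate to the companion matrix $\left(\begin{smallmatrix}0&-d\\1&0\end{smallmatrix}\right)$ of its characteristic polynomial $t^2+\det X$, where $d=\det X$. Thus $\GL2\KK$-conjugacy classes on $P\setminus\{0\}$ are parametrized by $d\in\KK$ (the case $d=0$ giving the non-trivial nilpotent class). Scaling by $s\in\KK^\times$ multiplies $\det X$ by $s^2$, so after fusion by scalars the orbits on $P\setminus\{0\}$ are in bijection with the square classes $R_*\cup\{0\}$, giving the listed representatives together with the zero orbit.

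The main obstacle is the identity $\beta_\HH(AXB,AYB)=N(A)\tilde B\,\beta_\HH(X,Y)\,B$; this is the sole non-routine computation, after which the classification reduces to rational canonical form modulo square factors in the determinant and the standard structure of matrix equivalence classes.
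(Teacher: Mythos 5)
Your argument is correct and lands on exactly the paper's normal forms, but the justification differs in two respects worth recording. The paper reads the three orbits on $\KK^{2\times2}$ off from \ref{SigmaBetaHH} (equivalently \ref{PhiDelta}), and for the orbits on $P$ it invokes \ref{conjugacyQuaternions} --- pure elements are conjugate precisely when they have the same norm --- a statement proved for quaternion \emph{fields} and extended to the split algebra, for $\Char\KK\ne2$, only by the Remark following it; you instead classify trace-zero $2\times2$ matrices up to $\GL2\KK$-conjugacy by rational canonical form, which is more elementary and sidesteps that caveat altogether. Likewise, you obtain the induced action on the centre by the direct computation $\beta_\HH(AXB,AYB)=N(A)\tilde{B}\,\beta_\HH(X,Y)\,B$, hence $\sigma'(Z)=N(A)N(B)\,B^{-1}ZB$, so that ``conjugation fused with arbitrary scalars'' is derived rather than quoted; the paper gets the same description of the induced group from \ref{SigmaBetaHH}. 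Two small remarks: citing \ref{SigmaBetaHH} for $\HH=\KK^{2\times2}$ is formally outside its hypotheses (it is proved for the anisotropic $W$ of \ref{def:WandHH}); the result that genuinely covers the split case is \ref{PhiDelta}, although the paper itself offers both citations in \ref{splitQuaternions}, so your usage matches the paper's. And your step ``a nonzero trace-zero matrix is non-scalar'' is precisely where $\Char\KK\ne2$ enters (in characteristic two the identity has trace zero), so it deserves an explicit mention, just as the paper notes that $0$ is the only scalar matrix in $P$.
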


\begin{proof}
  The orbits on $\KK^{2\times2}$ are clear from~\ref{SigmaBetaHH} or
  from~\ref{PhiDelta}.  Those on $P$ can be read off
  from~\ref{SigmaBetaHH} and~\ref{conjugacyQuaternions}: we have
  $N(x)=\det x$ and the elements of~$P$ are those with trace~$0$.
  Thus elements of $P$ are conjugates if, and only if, they have the
  same norm, and belong to the same orbit under
  $\Sigma_{\beta_{\KK^{2\times2}}}$ if their determinants differ by a
  square in~$\KK^\times$.
  
  Now $\Char\KK\ne2$ yields that $0$ is the only scalar multiple of
  the identity matrix in~$P$. For every other element $x\in P$ we may
  choose a representative for the orbit of~$x$ in Frobenius normal
  form $X_d:=\left(
    \begin{smallmatrix}
      0 & -d \\
      1 & 0
    \end{smallmatrix}\right)$, where $d\in R_*$ is the representative
  of~$\det(x)\KK^\sq$. 
\end{proof}

\bigbreak
\section{Results}\label{sec:results}

Let $H:=\gheis{V}{Z}{\beta}$ be a reduced Heisenberg algebra with
$\dim V=4$. From~\ref{isoHeis} we know that the orbits of
$\Aut{\gheis{V}{Z}{\beta}}$ are controlled by the orbits of
$\Sigma_\beta$ on $V$ and on~$Z$, respectively.  In particular, we
find $\omega(\gheis{V}{Z}{\beta}) = \omega_V+\omega_Z+1$ where
$\omega_V$ denotes the number of orbits in $H\setminus Z$ and
$\omega_Z$ the number of those in $Z\setminus\{0\}$.  The numbers
$\omega_V$ and $\omega_Z$ can be read off from our discussion of the
possible cases for $\ker{\hat\beta}$ in
Sections~\ref{sec:directComputations}, \ref{sec:fieldextensions}
and~\ref{sec:quaternions} above. %
Table~\ref{tab:results} collects these results; the column
``reference'' indicates the place where the corresponding result is
proved.

\newpage
\begin{table}[htb]
\begin{ndef}[Numbers of Orbits under Automorphisms of Reduced Heisenberg Algebras]
\label{tab:results}
\quad\\[1ex]
  \(
\renewcommand{\arraystretch}{1.14}%
\begin{array}{|c|c|c||c|l|}
  \hline
  \ker\widehat\beta & \omega_V & \omega_Z  & \omega & \text{References} \\
  \hline
  \hline
  \smallspnK{\s01} & 2 & 3 & 6 & \ref{stabS01}\\
  \hline
  \smallspnKperp{\s01+\s23} & 1 & 1 & 3 & \ref{stabGSp}\\
  \smallspnK{\s01+\s23}, \Char\KK\ne2 & 1 & 1+|R_*| & 3+|R_*| & \\
  \smallspnK{\s01+\s23}, \Char\KK=2 & 1 & 2+|R_+|+|R_\wp| & 4+|R_+|+|R_\wp| & \\
  \hline
  E & 3 & 3 & 7 & \ref{stabE} \\
  \hline
  T & 2 & 3 & 6 & \ref{stabT} \\
  T^\perp & 2 & 2 & 5 & \\
  \hline
  S & 2 & 2 & 5 & \ref{stabS} \\
  S^\perp & 2 & 2 & 5 & \\
  \hline
  J(F) & 2 & 1 & 4 & \ref{stabJF} \\
  \hline
  E+T & 3 & 2 & 6 & \ref{stabET} \\
  \hline
  E+S & 4 & 3 & 8 & \ref{stabES} \\
  \hline
  T+S, \Char\KK\ne2 & 2 & 1+|R_*|         & 4+|R_*| & \ref{stabTS}, \ref{SigmaBetaHH} \\
  T+S, \Char\KK=2   & 2 & 1+|R_*|+|R_\wp| & 4+|R_*|+|R_\wp| & 
  \\
  \hline
   P_\LL^\perp\in\mathcal{P}_1^\perp   & 1 & 1 & 3 & \ref{complexGHaut} \\
   P_\LL^{}\in\mathcal{P}_1         , \LL/\KK\text{ sep.} & 1 &
   \mathrm{HF}(\LL/\KK) & 2+\mathrm{HF}(\LL/\KK) &  \\
   P_\LL^{}\in\mathcal{P}_1         , \LL/\KK\text{ insep.} & 1 &
   \begin{array}[t]{r}
   |R_{\KK/\LL^\sq}^{}| + |R^{(2)}_{\KK/\LL^\sq}| \ \ \\ {} + |R_{\actSL}| -1       
   \end{array} &
   \begin{array}[t]{r}
     |R_{\KK/\LL^\sq}^{}| + |R^{(2)}_{\KK/\LL^\sq}| \ \ \\ {} + |R_{\actSL}| +1
   \end{array} & \\
  \hline
  W^0_{c,d}\in\mathcal{P}_2, \Char\KK\ne2 & 1 & \left|N(\HH^\times)/N(\HH^\sqt)\right| & 2+\left|N(\HH^\times)/N(\HH^\sqt)\right| &
  \ref{omegaQuaternionField} \\
  W^1_{c,d}\in\mathcal{P}_2, \Char\KK=2   & 1 & |R_{\HH^\times}|+|R_{W^\perp}| & 2+|R_{\HH^\times}|+|R_{W^\perp}| &  \\
  W^0_{c,d}\in\mathcal{P}_2, \Char\KK=2   & 1 & |R_H|+|R_W| & 2+|R_H|+|R_W|=|\KK| &  \\
  \hline
   P_\LL^0\in\mathcal{P}_3  & 2 & 1 + \left|R_{N}\right| & 4+ \left|R_{N}\right| & \ref{complexGHaut} \\
  \hline
\end{array}
\)
\end{ndef}
\end{table}

\bigbreak
\begin{ndef}[Notation]
In Table~\ref{tab:results} we use the following notation.
\begin{description*}
\item 
  [$R_*$] denotes a set of coset representatives for the classes in
  $\KK^\times/\KK^\sqt$.
\item
  [$R_+$] is a set of representatives for the orbits of $\KK^\sqt$
  on the additive group $\KK/\KK^\sq$.
\item
  [$R_\wp$] is (if $\Char\KK=2$) a set of coset representatives for the additive group
  $\KK/\wp$ where $\wp:=\set{x+x^2}{x\in\KK}$, cf.~\ref{Arf}.
\item
  [$\mathrm{HF}(\LL/\KK)$] denotes the number of equivalence classes of non-zero
  hermitian forms on~$\LL^2$, cf.~\ref{HermiteEq}.
\item
  [$R_{\KK/\LL^\sq}^{}$ and $R^{(2)}_{\KK/\LL^\sq}$] represent the two
  kinds of orbits under the action of $\GL2{\LL}$ on~$\KK^2$,
  see~\ref{diagonalCharTwo}. 
\item
  [$R_{\actSL}$] represents the orbits under the action $\actSL$ of
  $\SL2\LL$ on~$\KK^2$, see~\ref{complexGHaut}.
\item
  [$R_{\HH^\times}$] is a set of representatives for
  $\KK^\times/N(\HH^\times)$ where $\HH=\HH_{\KK}^{-d,-c}$. 
\item
  [$R_{W^\perp}\subseteq\Pu\HH$] is (for a quaternion field $\HH$
  over~$\KK$ with $\Char\KK=2$) a set such that the set %
  $\smallset{x^2}{x\in R_{W^\perp}}$ represents the orbits
  $v^2N(\HH^\sqt)+\KK^\sq$ of\/ $N(\HH^\sqt)$
  on~$\smallset{v^2+\KK^\sq}{v\in\Pu\KK}$.
\item
  [$R_H$] is (if $\HH=\KK(\sqrt{d},\sqrt{c})$ is a
  purely inseparable extension of degree~$4$) a set of coset
  representatives for $\HH^\times/\HH^\sqt$. 
\item
  [$R_W$] is a set of
  representatives for the one-dimensional subspaces of $(\Sk4)/W$,
  see~\ref{omegaQuaternionField}.
\item
  [$R_{N}^{}$] is a set of coset representatives for
  the multiplicative group 
  $\KK^\times/(N_{\LL/\KK}(\LL^\times)\left<-1\right>)$, 
  cf.~\ref{complexGHaut}.
\end{description*}  
\end{ndef}

\goodbreak
\section{Open problems}

\begin{prob}\label{prob:fullAut}
  How is $\Aut{\gheis[\ZZ]VZ\beta}$ related to
  $\Aut{\gheis VZ\beta}$, or, more generally, how is the automorphism
  group $\Aut{\mathfrak n}$ of a nilpotent Lie algebra $\mathfrak n$ over~$\KK$
  related to the automorphism group $\Aut[\ZZ]{\mathfrak n}$ of the Lie ring?
  Definitely, the subgroup
  \[
  H:=\set{(v,z,a)\mapsto(v,z+\tau(v),\alpha(a)+\zeta(v,z))}%
  {\begin{array}{cc}
      \tau\in\Hom VZ,\alpha\in\GL{}A,\\
      \zeta\in\Hom{V\times Z}{A}
   \end{array}}
  \]
  from~\ref{isoHeis} has to be enlarged to
  \[
  H_\ZZ:=\set{(v,z,a)\mapsto(v,z+\tau(v),\alpha(a)+\zeta(v,z))}%
  {\begin{array}{cc}
      \tau\in\Hom[\ZZ]VZ,\alpha\in\Aut[\ZZ]A,\\
      \zeta\in\Hom[\ZZ]{V\times Z}{A}
   \end{array}}
  \,;
  \]
  we must use arbitrary
  additive homomorphisms instead of $\KK$-linear ones. 
  Is it true that
  $\Aut{\gheis[\ZZ]VZ\beta}$ equals
  $\Aut\KK\ltimes\left(\Aut{\gheis VZ\beta}\cdot H_\ZZ\right)$~?

  A positive partial answer is known: according
  to~\cite[{1.1.1}]{MR2410562}, the assertion is true for $\gheis
  VZ\beta=\gheis{\KK^2}\KK\det$.
\end{prob}

\begin{prob}
  A deeper understanding of the action
  $\actSL\colon \SL2\LL\times\KK^2 \to \KK^2$
  in~\ref{complexGHaut} would be very welcome. 
\end{prob}

\begin{prob}
  Our results allow to identify the cases where
  $\omega(\gheis{V}{Z}{\beta})$ is finite (depending on the ground
  field~$\KK$), and then those where the invariant~$\omega$ takes on
  values that are particularly small. It appears that these algebras
  merit deeper study.
\end{prob}

\begin{acks}
  The authors owe Norbert Knarr at least one cup of coffee for
  the simple computational arguments in~\ref{knarrTrick} (replacing
  much deeper arguments involving Witt cancellation and the
  Skolem--Noether Theorem).

  The second author was supported by SFB 478 ``Geometrische Strukturen
  in der Mathematik'', M\"unster, Germany; a substantial part of this
  paper was written during a stay at M\"unster in 2009.   
\end{acks}

\bigbreak

\raggedright
\def\cprime{$'$}

\end{document}